\documentclass{jnsao}
\usepackage[utf8]{inputenc}
\usepackage[finnish,english]{babel}
\usepackage[svgnames]{xcolor}
\usepackage[textsize=footnotesize,color=DarkOrange!40]{todonotes}
\usepackage{hyperref}
\usepackage{tikz, pgfplots}
\usepgfplotslibrary{colorbrewer}
\usepackage[inline]{enumitem}
\usepackage{algpseudocode}
\usepackage[nameinlink,capitalise]{cleveref}
\usepackage{nicematrix}
\usepackage{booktabs}
\usepackage{comment}
\usepackage{pgfplotstable}

\makeatletter
\let\@the@H@page\relax
\makeatother

\definecolor{hrefcolor}{rgb}{0.0,0.4,0.7}
\definecolor{citecolor}{rgb}{0.0,0.35,0.2}
\definecolor{structure}{rgb}{0.09,0.09,0.44}
\hypersetup{
    colorlinks=true,
    linkcolor=citecolor,
    citecolor=citecolor,
    filecolor=hrefcolor,
    urlcolor=hrefcolor,
    pdfencoding=auto,
    hypertexnames=false,
}

\manuscriptcopyright{}
\manuscriptlicense{}

\usepackage{float}

\usepackage{xcolor,cancel}

\floatstyle{ruled}
\floatname{algorithm}{Algorithm}
\newfloat{algorithm}{tbp!}{loa}
\numberwithin{algorithm}{section}
\usepackage{changes-simple}

\tikzset{notestyleraw/.append style={align=justify}}

\usepackage{tikz}
\usepackage{pgfplots}
\usepgfplotslibrary{colorbrewer}

\def\ignorelegendentry#1{}
\pgfplotsset{
    every axis label/.append style = {font = \scriptsize},
    every tick label/.append style = {font = \scriptsize},
    ignore legend/.style={
        every axis legend/.code={\let\addlegendentry\ignorelegendentry}
    },
    log x ticks with fixed point/.style={
        xticklabel={
            \pgfkeys{/pgf/fpu=true}
            \pgfmathparse{exp(\tick)}%
            \pgfmathprintnumber[fixed relative, precision=3]{\pgfmathresult}
            \pgfkeys{/pgf/fpu=false}
        },
    },
    log y ticks with fixed point/.style={
        yticklabel={
            \pgfkeys{/pgf/fpu=true}
            \pgfmathparse{exp(\tick)}%
            \pgfmathprintnumber[fixed relative, precision=3]{\pgfmathresult}
            \pgfkeys{/pgf/fpu=false}
        },
    },
    compat=1.18,
    every axis/.append style={
        outer sep = 0pt,
        title style = {
            font = \normalsize
        },
        every x tick label/.append style={
          /pgf/number format/1000 sep={\ },
        },
        every y tick label/.append style={
          /pgf/number format/1000 sep={\ },
        }
    }
}

\usetikzlibrary{external}
\tikzset{external/optimize=true}
\tikzexternaldisable

\usepackage{mdframed}

\mdfdefinestyle{examplebg}{
    backgroundcolor=black!6,%
    hidealllines=true,%
    innertopmargin=.3em,%
    innerbottommargin=.7em,%
    innerleftmargin=.7em,%
    innerrightmargin=.7em,%
    footnoteinside=false,
}

\surroundwithmdframed[style=examplebg]{example}

\author{%
    Jyrki Jauhiainen\thanks{%
        Department of Mathematics and Scientific Computing, University of Graz, Austria.
        \mbox{\email{jyrki.jauhiainen@uni-graz.at}},
        \orcid{0000-0001-6711-6997}
    }
    \and
    Yassine Nabou\thanks{%
        Department of Mathematics and Statistics, University of Helsinki, Finland.
        \mbox{\email{yassine.nabou@helsinki.fi}},
        \orcid{0009-0004-9805-8039}
    }
    \and
    Tuomo Valkonen\thanks{%
        MODEMAT Research Center in Mathematical Modeling and Optimization, Quito, Ecuador
        \emph{and}
        Department of Mathematics and Statistics, University of Helsinki, Finland.
        \email{tuomo.valkonen@iki.fi},
        \orcid{0000-0001-6683-3572}
    }
}

\shortauthor{J.~Jauhiainen, Y.~Nabou, T.~Valkonen}

\acknowledgements{%
    This research has been supported by the Academy of Finland grant 345486.
    This research was funded in whole or in part by the Austrian Science
    Fund (FWF) \href{https://dx.doi.org/10.55776/F100800}{10.55776/F100800}.
}

\title{Dynamic inverse problems: Single-loop online algorithms}

\newcommand{\term}{\emph}

\newcommand{\field}[1]{\mathbb{#1}}
\newcommand{\N}{\mathbb{N}}

\newcommand{\R}{\field{R}}
\newcommand{\extR}{\overline \R}

\newcommand{\norm}[1]{\|#1\|}

\newcommand{\abs}[1]{|#1|}

\newcommand{\inv}[1]{#1^{-1}}

\newcommand{\grad}{\nabla}
\newcommand{\freevar}{\,\boldsymbol\cdot\,}
\newcommand{\Union}\bigcup
\newcommand{\Isect}\bigcap
\newcommand{\union}\cup
\newcommand{\isect}\cap
\newcommand{\bigunion}\bigcup
\newcommand{\bigisect}\bigcap

\newcommand{\defeq}{:=}

\newcommand{\subdiff}{\partial}

\DeclareRobustCommand{\downto}{{{\mathchoice%
                    {\rotatebox[origin=c]{-20}{$\to$}}
                    {\rotatebox[origin=c]{-20}{$\to$}}
                    {\rotatebox[origin=c]{-20}{\scalebox{0.75}{$\to$}}}
                    {\rotatebox[origin=c]{-20}{\scalebox{0.6}{$\to$}}}
                }}}
\DeclareRobustCommand{\upto}{{{\mathchoice%
                    {\rotatebox[origin=c]{20}{$\to$}}
                    {\rotatebox[origin=c]{20}{$\to$}}
                    {\rotatebox[origin=c]{20}{\scalebox{0.75}{$\to$}}}
                    {\rotatebox[origin=c]{20}{\scalebox{0.6}{$\to$}}}
                }}}

\DeclareMathOperator{\closure}{cl}

\DeclareMathOperator{\conv}{conv}

\DeclareMathOperator{\diag}{diag}

\DeclareMathOperator{\dist}{dist}
\DeclareMathOperator{\prox}{prox}

\newcommand{\iprod}[2]{\langle #1,#2\rangle}

\def\WOp{\Sigma^{-1/2}}
\def\EITmeas{\mathscr{I}}

\def\PDpredictConstr{\mathscr{U}}

\def\ProductSpace{U}
\def\primaldynamics{\bar{\mathscr{X}}}
\def\PpredictConstr{\mathscr{X}}
\def\DpredictConstr{\mathscr{Y}}
\def\PDpredictConstr{\mathscr{U}}

\newcommand{\Precond}{M}

\newcommand{\EkGrowthMono}[1][k]{{\hat\gamma}_{E}}
\newcommand{\EkLossMono}[1][k]{{\hat\lambda}_{E}}
\newcommand{\EkLossGlobal}[1][k]{{\bar\lambda}_{E}}
\newcommand{\EkGrowthGlobal}[1][k]{{\bar\gamma}_{E}}
\newcommand{\EkLoss}[1][k]{\lambda_{E}}
\newcommand{\EkGrowth}[1][k]{\gamma_{E}}
\def\Err{e}
\def\ErrMono{\hat e}
\def\ErrGlobal{\bar e}
\def\GrowthOpMono{\hat\Gamma}

\def\GrowthOp{\Gamma}

\def\criticalprox{\hat r}

\def\YoungCoeff{\xi}
\def\deltaCoeff{\theta}
\def\Perturbr{\Delta}
\def\EkLipCoeffCoeff{\kappa}

\def \weaktostarSym{\setbox0=\hbox{$\rightharpoonup$}\rlap{\hbox
        to\wd0{\hss\raise1ex\hbox{$\scriptscriptstyle{*\,}$}\hss}}\box0}

\def\linear{\mathbb{L}}
\newcommand{\setto}{\rightrightarrows}

\def\extR{\overline \R}

\def\dualprod#1#2{\langle #1|#2\rangle}
\newcommand{\adaptdualprod}[2]{\left\langle #1 \middle| #2\right\rangle}

\def\infconv{\mathop{\Box}}

\let\phi=\varphi
\let\epsilon=\varepsilon

\DeclareMathOperator{\Id}{Id}

\def\nexxt#1{#1^{k+1}}
\def\this#1{#1^k}
\def\prev#1{#1^{k-1}}
\let\opt\hat

\def\estgrad#1#2{\widetilde{\grad #1}_{#2}}
\def\estdiff#1#2{\widetilde{#1'_{\phantom{k\,}}}\!\!\!\!{\lower2pt\hbox{$_{#2}$}}}

\def\nextx{\nexxt x}
\def\thisx{\this x}
\def\prevx{\prev{x}}
\def\optx{\opt x}

\def\nexty{\nexxt y}
\def\thisy{\this y}

\def\nextu{\nexxt u}
\def\thisu{\this u}
\def\optu{\opt u}

\def\nextdiff{E_{{k+1}}'(\predicted{x}{k+1})}
\def\nextestdiff{\estdiff{E}{{k+1}}(\predicted{x}{k+1})}
\def\nextestgrad{\estgrad{E}{{k+1}}(\predicted{x}{k+1})}
\def\thisestdiff{\estdiff{E}{{k}}(\predicted{x}{k})}
\def\thisestgrad{\estgrad{E}{{k}}(\predicted{x}{k})}
\def\thisgrad{\grad E_k(\predicted{x}{k})}
\def\nextgrad{\grad E_k(\predicted{x}{k+1})}
\def\thisdiff{E_k'(\predicted{x}{k})}

\def\diffwrt#1#2{#1^{(#2)}}

\def \approxinSym{\setbox0=\hbox{$\in$}\rlap{\hbox
        to\wd0{\hss\raise1ex\hbox{$\sim$}\hss}}\box0}

\def\localset{\Omega}

\usepackage{ifthen}

\newcommand{\dis}[4][]{%
    d_{#2}%
    \ifthenelse{\equal{#1}{2}}{^2}{}%
    (#3, #4)%
}

\newcommand{\adaptdis}[4][]{%
    d_{#2}%
    \ifthenelse{\equal{#1}{2}}{^2}{}%
    \left(#3, #4\right)%
}

\newcommand{\fakenorm}[1]{\llbracket #1 \rrbracket}

\def\primaldifffact{\mu_u}

\def\trackingres{\psi}

\newcommand{\trackingressum}[1][p]{\varsigma_{#1}}

\def\innerError#1{\varepsilon_{u,#1}}
\def\adjointError#1{\varepsilon_{w,#1}}
\def\transformError#1{{\tilde\varepsilon}_{#1}}

\def\predicted#1#2{\breve{#1}^{#2}}
\def\realopt#1{\hat #1}
\def\realoptu{{\realopt{u}}}

\def\realoptx{{\realopt{x}}}

\def\realopty{{\realopt{y}}}

\def\primalpredict{\breve x}

\def\pdpredict{\breve u}
\def\overnext#1{\bar #1^{k+1}}
\def\overnextu{\overnext{u}}
\let\MAX\overline

\def\combinederror{\mathring e}

\def\Nelec{N_{\text{elec}}}
\def\Nmeas{N_{\text{inj}}}

\begin{document}

\maketitle

\begin{abstract}
    We study efficient online methods for dynamic inverse problems with infinite time horizon.
    We concentrate, in particular, on problems whose forward model arises from a PDE.
    Our motivating application is flow monitoring with Electrical Impedance Tomography (EIT).
    The idea of such online methods is to take single steps of of standard optimisation algorithms, on each time index; each data frame.
    A predictor, based on problem dynamics, is used to transfer iterates one from time index to the next one.
    If we monitor a fast flow with a correspondingly fast measurement modality, such as EIT, basic methods are unable to solve the PDE before new data arrives.
    Our idea, then, is to not solve it, and instead, on each iteration, each time index, take single or few steps of standard iterative solvers towards the solution of both the PDE and an adjoint PDE.
    This is what “single loop” refers to.
    To the overall problem, we apply standard online optimisation methods, at the outside developed for exact gradients $\grad E_k(x^k)$ of the iteration-dependent data fidelity $E_k$ that incorporates the PDE.
    We replace the gradient by a single-loop estimate $\widetilde{\grad E_k}(x^k)$ that satisfies standard smoothness properties with summable errors.
    This allows standard regret proofs to go through.
    Our numerical experiments on dynamic EIT validate the theoretical predictions and highlight the potential of the proposed approach for the real-time solution of PDE-constrained dynamic inverse problems.
\end{abstract}

\section{Introduction and Motivation}
\label{sec:intro}

Electrical Impedance Tomography (EIT) is a non-invasive imaging modality in which the internal electrical conductivity of a body is reconstructed from boundary voltage--current measurements. Due to its low cost, portability, and absence of ionising radiation, EIT has found applications in medical imaging (e.g.\ lung ventilation monitoring), geophysics, and industrial process monitoring. In many of these applications, the conductivity distribution evolves over time, for instance due to physiological dynamics, transport processes, or changing environmental conditions. As a consequence, measurements are acquired sequentially and reconstructions must be updated in real time, giving rise to \emph{online} or \emph{dynamic} inverse problems \cite{ip_special_issue_2028,holland2010reducing,hunt2014weighing,lipponen2011nonstationary,tuomov2024online-eit,alsaker2023multithreaded,holder2026electrical}.

If we wish to solve the dynamic EIT problem, or other challenging dynamic inverse problem, in real-time, as the data progresses, we need very fast algorithms. If we want a high resolution solution, we may not have time to wait for partial differential equations (PDE) governing the problem to be solved; new data will arrive faster.
In fact, we will usually also need to solve an adjoint PDE, adding to the computational cost.
In this work, based on the idea of \emph{single-loop online differential estimation}, we will develop algorithms that are able to provide real-time solutions---by never solving the PDEs.

\paragraph{Mathematical model of EIT}

At time step $k$, the forward problem of dynamic EIT is governed by the Complete Electrode Model (CEM). Given a conductivity distribution $\sigma^k: \Omega \to \R$, the electric potential $u^{j,k}: \Omega \to R$ corresponding to the $j$th current injection ($j=1,\ldots,\Nmeas$) satisfies
\begin{subequations}
    \label{eq:eit:cem}
    \begin{align}
        \nabla\cdot(\sigma^k\nabla u^{j,k})                           & = 0          &  & \text{in }\Omega,
        \\
        u^{j,k} + \zeta_i \sigma^k\nabla u^{j,k}\cdot\nu              & = U_i^{j,k}  &  & \text{on }\partial\Omega_{e_i},
        \\
        \sigma^k\nabla u^{j,k}\cdot\nu                                & = 0          &  & \text{on }\partial\Omega\setminus\cup_i \partial\Omega_{e_i},
        \\
        \int_{\partial\Omega_{e_i}}\sigma^k\nabla u^{j,k}\cdot\nu\,dS & = -I_i^{j,k} &  & \text{for }i=1,\dots,\Nelec.
    \end{align}
\end{subequations}
Here $\Omega\subset\mathbb{R}^d$ denotes the imaging domain, $\partial\Omega_{e_i}$ are the electrode surfaces, $\zeta_i>0$ are contact impedances, and $\nu$ is the outward unit normal.
The electrode potentials $U_i^{j,k}$ for electrodes $i=1,\ldots,\Nelec$ are prescribed, while the resulting electrode currents $I_i^{j,k}$ are determined by the model.
We adopt a potential-to-current formulation of the CEM, in which the electrode potentials $U^{j,k}$ are prescribed and the corresponding electrode currents $I^{j,k} = I(\sigma^k, U^{j,k})$ are computed. This reflects modern EIT measurement devices \cite{jauhiainen2020relaxed}.
Here $\EITmeas^{j,k}$ denotes noisy measurements of the electrode currents corresponding to the applied electrode potentials $U^{j,k}$.
The inverse problem consists in recovering $\sigma^k$ from noisy boundary measurements $\EITmeas^{j,k}$.
Applying total variation regularisation, this is commonly formulated the optimisation problem
\begin{gather}
    \label{eq:intro:functions}
    \min_\sigma E_k(\sigma) + F_k(\sigma) + G_k(K_k \sigma)
    \intertext{for $K_k$ a (discretised) gradient operator,}
    \label{eq:eit:functionals}
    E_k(\sigma) \defeq
    \frac12\sum_{j=1}^{\Nmeas}
    \norm{\WOp ( I(\sigma,U^{j,k}) - \EITmeas^{j,k})}_2^2,
    \quad
    F_k(\sigma) \defeq \delta_{[\sigma_m,\sigma_M]}(\sigma),
    \quad\text{and}\quad
    G_k(y) \defeq \alpha \norm{y}_{2,1},
\end{gather}
where $0<\sigma_m<\sigma_M$ enforce physical bounds, $\WOp$ is a data precision matrix, and $\alpha>0$ a regularisation parameter.
Similar formulations have been studied in \cite{voss2018imaging,voss2019three,jauhiainen2021nonplanar}.

\paragraph{General online optimisation framework}

Motivated by online EIT and related applications, we consider the general infinite time horizon optimisation problem
\begin{equation}
    \label{eq:online:problem}
    \min_{(x^0,x^1,\dots)\in\primaldynamics} \sum_{k=0}^{\infty} V_k(x^k),
    \quad\text{for}\quad
    V_k(x) \defeq E_k(x) + G_k(K_k x) + F_k(x)
    \quad\text{and}\quad
    \primaldynamics \subset \prod_{k=0}^\infty X_k,
\end{equation}
where $E_k: X_k \to \mathbb{R}$ is smooth but possibly nonconvex, $K_k \in \mathbb{L}(X_k;Y_k)$, $G_k: Y_k \to \mathbb{R}$ and $F_k: X_k \to \mathbb{R}$ are convex and possibly nonsmooth. $X_k,Y_k$ are normed spaces.
The set $\primaldynamics$ encodes the dynamics of the problem.
The data fidelity term is of the composite form
\begin{equation}
    \label{eq:tracking:implicit-inner}
    E_k(x)=J_k(S_{k,u}(x)),
    \qquad
    0 = T_k(S_{k,u}(x),x),
\end{equation}
and $S_{k,u}: X_k \to U_K$ denotes the solution operator for the PDE constraint $T_k(u,x)=0$.
Here, the subscript $u$ is symbolic, indicating that the operator maps into the space $U_k$ (to distinguish it from the adjoint solution map $S_{k,w}$ defined later).
That is, for each $x$, the state $u=S_{k,u}(x) \in U_k$ satisfies the underlying (possibly nonlinear) PDE.
This formulation encompasses EIT as well as a broad class of dynamic inverse and control problems.

\paragraph{Contributions}

With the goal of developing efficient online methods for the problem~\eqref{eq:online:problem}, in \cref{sec:online-tracking,sec:inner-adjoint}, we develop \textbf{single-loop estimates $\thisestdiff$ of differentials} $\thisdiff$ of composite functions $E_k = J_k \circ S_{k,u}$, where $S_{k,u}$ denotes the solution mapping of the inner PDE constraint.
The idea is that on each data frame or outer iteration $k$, we only make few very simple operations to update our estimate $\thisestdiff$ into $\nextestdiff$.
Here $\predicted{x}{k}$ is a prediction of $\thisx$, based on the dynamics of the problem.
The latter is formed by a correction step performed by the optimisation method.
This prediction is essential, as, in a dynamic problem, $\prevx$ may no longer be valid or good starting point for the optimisation problem, the data of the problem having changed.
This fact is also modelled by the time-indexed spaces $X_k$.

The differential estimates lift standard iterative algorithms, such as Gauss–Seidel splitting, for estimating $S_{k,u}(\predicted{x}{k})$, as well as an adjoint equation that defines $\thisdiff=J_k'(S_{k,u}(\predicted{x}{k}))S_{k,u}'(\predicted{x}{k})$.
However, on a single outer iteration or data frame $k$, we never iterate these \emph{inner and adjoint algorithms} to completion: our goal is to---justifiably---only take a single step or a small number of step of these algorithms.
This is what \emph{single-loop} refers to.

The core idea, rigorously developed in \cref{sec:online-tracking}, is that all the complexity of the analysis single-loop methods reduces to the estimates $\thisestdiff$ of $\thisdiff$ satisfying standard smoothness properties with controlled error.
Rather than recomputing inner and adjoint solutions from scratch on each time step, the key to this working, is to recursively exploit already computed information from previous data frames, and to track and control the propagation of errors.
In \cref{sec:inner-adjoint} we show that the steps of standard splitting algorithms are capable of this, when combined with appropriate \term{predictors} to account for the dynamic nature of the problem.

These new online differential estimates can directly be used in the (outer) online primal-dual method of \cite{tuomov2024online-eit}, which we recall in \cref{sec:opdps}.
We also previously applied to method to dynamic EIT, however using intermittent PDE solutions from a background thread to obtain real-time performance.
We demonstrate numerically in \cref{sec:numerical} that the new single-loop differential estimates can provide much more stable solutions without thread synchronisation difficulties and---as follows from \cref{sec:online-tracking,sec:opdps}---with less analytical complexity.

Our approach to generating the estimates extends the static theories of \cite{jensen2022nonsmooth,tuomov2024tracking,suonpera2024general}.
Alternative real-time algorithms for dynamic EIT include the multi-threaded approach of \cite{alsaker2023multithreaded}, based on the D-bar method.
Our companion paper \cite{online-regtheory} studies relevant dynamic regularisation theory: how do the algorithmic solutions behave as the noise level $\delta \downto 0$, the corresponding regularisation parameter $\alpha_\delta \downto 0$, and the frame $k \upto \infty$.

\paragraph{Notation and elementary results}

We denote the extended reals by $\extR \defeq [-\infty,\infty]$.
We write $H: X \setto Y$ when $H$ is a set-valued map from the space $X$ to $Y$.
We write $\mathbb{L}(X;Y)$ for the space of bounded linear operators between the normed spaces $X$ and $Y$.
For Fréchet differentiable $F:X \to R$, we write $F'(x) \in X^*$ for the Fréchet derivative at $x \in X$. Here $X^*$ is the dual space to $X$.
For a convex function $F: X \to \extR$, we write $\subdiff F: X \setto X^*$ for its subdifferential map.
On a normed space $X$, for a point $x \in X$ and a set $U \subset X$, we write $\dist(x, U) \defeq \inf_{x' \in U} \norm{x-x'}_X$, where $\norm{\freevar}_X$ is the norm on $X$. We also write $\dist^2(x, U) \defeq \dist(x, U)^2$.
We write $\iprod{x}{x'}$ for the inner product between two elements $x$ and $x'$ of a Hilbert space $X$, and $\dualprod{x^*}{x} \defeq x^*(x)$ for the dual product or dual pairing in a Banach space.
We only work with real Hilbert spaces.
We write $\Id: X \to X$ for the identity operator on $X$ and $\delta_A: X \to \extR$ for the $\{0,\infty\}$-valued indicator function of a set $A \subset X$.

For $X$ a Hilbert space, we will frequently use Pythagoras' three-point identity
\begin{equation}
    \label{eq:intro:three-point}
    \iprod{x-y}{x-z}_X = \frac{1}{2}\norm{x-y}_X^2 - \frac{1}{2}\norm{y-z}_X^2 + \frac{1}{2}\norm{x-z}_X^2
    \quad
    (x,y,z\in X)
\end{equation}
and (inner product) Young's inequality
\begin{equation}
    \label{eq:intro:young}
    \iprod{x}{y}
    \le
    \norm{x}_X\norm{y}_X \le
    \frac{1}{2\alpha}\norm{x}_X^2 + \frac{\alpha}{2}\norm{y}_X^2
    \quad(x,y \in X,\, \alpha>0).
\end{equation}

Finally, given a set $\mathscr{X} \subset \prod_{k=0}^\infty X_k$, we define for $n \le m$ the slices $\mathscr{X}_{n:m} \defeq \{x_{n:m} \mid, (x_0, x_1, \ldots) \in \mathscr{X} \}$, where $x_{n:m} \defeq (x_n, \ldots x_m)$. We may use superscripts instead of subscripts in the case of iterates of algorithms.

\section{An inexact online primal-dual method}
\label{sec:opdps}

Recall the infinite-horizon optimisation problem \eqref{eq:online:problem},
\begin{equation}
    \label{eq:pd:infinite-horizon-problem}
    \min_{(x^0,x^1,\dots)\in\primaldynamics} \sum_{k=0}^{\infty} V_k(x^k),
    \quad\text{for}\quad
    V_k(x) \defeq E_k(x) + G_k(K_k x) + F_k(x),
\end{equation}
where $E_k$ is a possibly nonconvex but finite-valued function, $F_k$ and $G_k$ are convex, proper, lower semicontinuous functions, and $K_k$ is a linear operator.
The set $\primaldynamics \subset \prod_{k=0}^\infty X_k$ encodes the primal dynamics.
Note that, due to the Hilbert-space nature of \cref{alg:pd:alg}, in this section we talk about gradients $\thisgrad \in X_k$ and their estimates, while in most of the rest of the manuscript we talk about differentials $\thisdiff \in X_k^*$ and their estimates in general normed spaces. The former is the Riesz representation of the latter.
Often, even in Hilbert spaces, it is more convenient to not work with the presentation.
For the online solution of \cref{eq:pd:infinite-horizon-problem}, in this section, we recall the primal-dual method of \cite{tuomov2024online-eit}, presented in \cref{alg:pd:alg}, and its assumptions and regret theory.
\term{Regret} is replacement in online optimisation of the concept of \emph{convergence} in static optimisation. The rough idea is to bound the regret of past decisions (steps taken by the optimisation algorithm) subject to new information. In practise, mathematically, regret results are analogous to convergence results, but often weaker.

\Cref{alg:pd:alg} consists of three main steps:
\begin{enumerate}
    \item \emph{Predict} the primal and dual variables $\predicted{u}{k+1} = (\predicted{x}{k+1},\predicted{y}{k+1})$ for frame $k+1$, from the iterate $\thisu= (\thisx, \thisy)$ of frame $k$ (and earlier frames). This should use appropriate dynamical models; in the EIT experiments of \cite{tuomov2024online-eit} and \cref{sec:numerical}, the transport equation (optical flow) is used.
    \item \emph{Form} the gradient estimate  $\estgrad{E}{k+1}(\predicted{x}{k+1})$. Typically $E_{k+1}$ is the data term, and in our EIT application of interest, computing $\grad E_{k+1}(\predicted{x}{k+1})$ exactly would involve solving a PDE and an adjoint PDE. We want to avoid this. In the present paper, to do this, we will use single or few \emph{inner} and \emph{adjoint} steps that arise from standard algorithms. This is the topic of \cref{sec:inner-adjoint}.
    \item \emph{Correct} the primal and dual variables to $\nextu= (\nextx, \nexty)$ using a standard primal-dual proximal splitting step based on the predicted iterate, and employing the differential estimate.
\end{enumerate}
This algorithm was, thus, already designed with differential estimates in mind.
As we will see in this section, these estimates only need to satisfy simple smoothness-type properties with controlled error.
We will in the next \cref{sec:online-tracking} represent a rigorous theory of single-loop online differential estimation for forming estimates that satisfy those properties.\begin{algorithm}
    \caption{Nonconvex predictive online primal-dual proximal splitting (POPD-N)}
    \label{alg:pd:alg}
    \begin{algorithmic}[1]
        \Require For all $k\in\N$, on real Hilbert spaces $X_k$ and $Y_k$: convex, proper, lower semicontinuous $E_{k+1}\colon X_{k+1}\to\R$, $F_{k+1}\colon X_{k+1}\to\extR$, and $G_{k+1}^*\colon Y_{k+1}\to\extR$; a primal-dual predictor $P_k\colon X_k\times Y_k\to X_{k+1}\times Y_{k+1}$; and an operator $K_{k+1}\in\linear(X_{k+1};Y_{k+1})$.
        Estimates $\estdiff{E}{k+1}(\predicted{x}{k+1})$ of the gradients $E'_{k+1}(\predicted{x}{k+1})$.
        Step length parameters $\tau,\sigma >0$.
        \State Pick initial iterates $x^0\in X_0$ and $y^0\in Y_0$.
        \For{$k\in\N$}
        \State
        \label{item:alg:pd:predict}
        $(\predicted{x}{k+1},\predicted{y}{k+1}) \defeq P_k(\thisx,\thisy)$
        \Comment{prediction step}
        \State
        \label{item:alg:pd:primal}
        $\nextx \defeq \prox_{\tau F_{k+1}}\!\bigl(\predicted{x}{k+1} - \tau \estgrad{E}{k+1}(\predicted{x}{k+1}) - \tau K_{k+1}^*\predicted{y}{k+1}\bigr)$
        \Comment{primal step}
        \State
        \label{item:alg:pd:dual}
        $\nexty \defeq \prox_{\sigma G_{k+1}^*}\!\bigl(\predicted{y}{k+1} + \sigma K_{k+1}(2\nextx - \predicted{x}{k+1})\bigr)$
        \Comment{dual step}
        \EndFor
    \end{algorithmic}
\end{algorithm}

\subsection{Assumptions and basic setup}

We need to make several assumptions.
Compared to \cite{tuomov2024online-eit}, for simplicity, we choose step length parameters and growth and smoothness factors to be independent of the frame number $k$.
To allow evolving data to be located anywhere in the problem, all the functions and operators in \eqref{eq:pd:infinite-horizon-problem}, and spaces, however, remain dependent on the frame index $k$.
We refer to \cite{tuomov2024online-eit} for the full theory.

The first assumption formally introduces the relevant functions, predictors, and comparison sets.\begin{assumption}[{Basic structural assumptions \cite[Assumption 2.1]{tuomov2024online-eit}}]
    \label{ass:pd:main}
    Let $N\in\N$. On real Hilbert spaces $X_k$ and $Y_k$ for $0\le k\le N$, we are given:
    \begin{enumerate}[label=(\roman*),nosep]
        \item Convex, proper, lower semicontinuous functions
              $F_k\colon X_k\to\extR$ and $G_k^*\colon Y_k\to\extR$,
              with uniform strong convexity factors $\gamma\ge0$ and $\rho\ge0$, respectively; operators $K_k\in\mathbb{L}(X_k;Y_k)$; and a possibly nonconvex but finite-valued function $E_k\colon X_k\to\R$.
        \item A bounded set $\PDpredictConstr \subset \ProductSpace$ of primal-dual comparison sequences. Here
              \[
                  \ProductSpace
                  \defeq
                  (X_0\times Y_0)\times(X_1\times Y_1)\times(X_2\times Y_2)\cdots.
              \]
        \item Primal-dual predictors $P_k\colon X_k\times Y_k\to X_{k+1}\times Y_{k+1}$ that produce $\predicted{u}{k+1} = P_k(u^k)$.
    \end{enumerate}
\end{assumption}

The predictor $P_k$ is merely a notational convenience. Unlike the notation suggests, we allow $(\predicted{x}{k+1}, \predicted{y}{k+1}) = P_k(x^k, y^k)$ to depend on the entire history, not merely $(x^k, y^k)$.

From the point of view of the satisfaction of our remaining assumptions, we would ideally be able to take $\PDpredictConstr$ as the whole space. However, our main regret result does not allow this.
Another practical option would be the set of solutions to \eqref{eq:pd:infinite-horizon-problem}, without observing the dynamic constraint $\primaldynamics$.
However, intuitively, both
\begin{equation}
    \label{eq:pd:PpredictConstr}
    \PpredictConstr \defeq \{\bar x^{0:\infty}\in X_{0:\infty}\ | (\bar x^{0:\infty},\bar y^{0:\infty})\in\PDpredictConstr\}
\end{equation}
and the predictors $P_k$ should approximate the original primal dynamics $\primaldynamics$.

Our second assumption formalises the requirement of a “smooth estimate” $\thisestgrad$ of each $\grad E_k(\this{\primalpredict})$.
The monotonicity-like conditions in \cref{item:pd:main-forwardstep-local0,item:pd:main-forwardstep-local} are superficially similar, but have slight differences: the first one is supposed to hold globally for any $x$, the second one, only locally with better factors. The regret bound proofs in \cite{tuomov2024online-eit} use the global version to derive an \emph{a priori} estimate that ensures that the iterates stay in the neighbourhood where the second \emph{a posteriori} estimate holds.
Verifying this assumption for single-loop estimates generated by standard inner and adjoint algorithms, is the main content of this paper in the following \cref{sec:inner-adjoint}.
In \cite{tuomov2024online-eit} a background solver was used to generate these estimates.
Moreover, \cite{tuomov2024online-eit} also formalises corresponding fully global assumption. For simplicity, we concentrate here on the local version, only.

\begin{assumption}[{Local smoothness and growth, and step length bounds \cite[Assumption 2.3]{tuomov2024online-eit}}]\footnotemark
    \label{ass:pd:smoothness}
    Let $(\predicted{x}{k},\predicted{y}{k}) = P_{k-1}(u^{k-1})$. Given $N \in \N$, \cref{ass:pd:main} holds, and
    \begin{enumerate}[label=(\roman*),nosep]
        \item\label{item:pd:main-forwardstep-local0}
              \textbf{Global a priori smoothness and growth:}
              For all $1 \le k \le N$,
              $E_k$ satisfies for some growth and smoothness factors $\EkGrowthGlobal \in \R$, $\EkLossGlobal \ge 0$, and errors $\ErrGlobal_k\ge0$, for all  $\this\realoptu = (\this{\realoptx}, \this{\realopty}) \in H_k^{-1}(0)$, and for all $x^k \in X_k$ the “erroneous” three-point monotonicity-like property
              \[
                  \iprod{\thisestgrad- \grad E_k(\this\realoptx)}{x^k - \this\realoptx} \ge {\EkGrowthGlobal}{}\norm{x^k - \this\realoptx}^2 - {\EkLossGlobal}{}\norm{x^k - \this\primalpredict}^2 - \ErrGlobal_k.
              \]
        \item\label{item:pd:main-forwardstep-local}
              \textbf{Local a posteriori smoothness and growth:}
              For every comparison sequence $\bar x^{1:N} \in \PpredictConstr_{1:N} \cap \prod_{i=1}^N B(\predicted{x}{i},\delta)$,
              for all $1 \le k \le N$,
              $E_k$ satisfies for some smoothness and growth factors $\EkLoss \ge 0$ and $\EkGrowthMono \ge \EkGrowth \ge 0$, errors $\Err_k, \ErrMono_k \ge 0$, and a radius $\delta>0$, for all $x^k \in B(\this{\bar x}, \delta)$, the inequality
              \begin{gather*}
                  \iprod{\thisestgrad}{x^k-\this{\bar x}}
                  \ge
                  E_k(x^k)-E_k(\this{\bar x})
                  + \frac{\EkGrowth}{2}\norm{x^k - \this{\bar x}}^2
                  - \frac{\EkLoss}{2}\norm{x^k - \this\primalpredict}^2 - \Err_k,
                  \intertext{as well as, for any  $\this\realoptu = (\this{\realoptx}, \this{\realopty}) \in H_k^{-1}(0) \isect B(\this\primalpredict, \delta) \times Y_k$ and $x^k \in B(\realoptx^k,{\delta})$ that}
                  \iprod{\thisestgrad - \grad E_k(\this\realoptx)}{x^k - \this\realoptx} \ge {\EkGrowthMono}{}\norm{x^k - \this\realoptx}^2 - {\EkLossMono}{}\norm{x^k - \this\primalpredict}^2 - \ErrMono_k.
              \end{gather*}
              Here the predicted primal variable $\predicted{x}{k}$ is given by \cref{ass:pd:main}.
        \item\label{item:pd:smoothness:steplength}
              \textbf{Step length factors:}
              The step length parameters $\tau,\sigma>0$ along with a $\EkLipCoeffCoeff\in(0,1)$, satisfy
              \begin{align}
                  \label{eq:pd:primaltestcond-positivity-local}
                  1 & \ge
                  \max \Bigl\{
                  \EkLoss,\,
                  -2(\gamma + \EkGrowthGlobal),\,
                  \tfrac{2}{1-\EkLipCoeffCoeff}\EkLossGlobal,\,
                  \tfrac{2}{1-\EkLipCoeffCoeff}\EkLossMono
                  \Bigr\}\tau
                  + \tau\sigma\norm{K_k}^2.
              \end{align}
    \end{enumerate}
\end{assumption}

\footnotetext{%
    Compared to \cite[Assumption 2.3]{tuomov2024online-eit}, this fixes the \term{testing parameters} $\phi_k \equiv 1$, $\eta_k \equiv \tau$, and $\psi_k \equiv \tau/\sigma$.

    Moreover, we use a slightly weaker version of the smoothness and growth assumptions. In
    \cite{tuomov2024online-eit}, the corresponding estimates are formulated for arbitrary test points in the indicated neighbourhoods and arbitrary framewise comparison points. In the present paper, we only require them at the specific primal iterate $x^k$ generated by the algorithm, and only for the $k$th component $\bar x^k$ of the comparison sequence under consideration.

    This restriction is sufficient for the arguments used for \cite[Theorem 2.12]{tuomov2024online-eit}. Indeed, in the theorem, the relevant estimates are only applied at the current iterate. Likewise, the comparison point is not used as an arbitrary isolated element of the framewise slice $\PpredictConstr_k$, but as the $k$th component of a fixed comparison sequence $\bar x^{1:N} \in \PpredictConstr_{1:N}$. More precisely, in the proof of \cite[Theorem 2.12]{tuomov2024online-eit}, $\bar x^k$ is the primal component of the $k$th component of the fixed comparison sequence $\bar u^{1:N} \in \mathcal U_{1:N}$. This is the reason for formulating the present assumption for comparison sequences $\bar x^{1:N} \in \PpredictConstr_{1:N}$ rather than for arbitrary framewise comparison points. Once such a sequence is fixed, the proof only uses its $k$th $x$ component at time $k$. Further, \cite[Assumption 2.5(i)]{tuomov2024online-eit} is applied in \cite[Lemma 2.10, before equation (17)]{tuomov2024online-eit} with $x=x^k$. The second estimate in \cite[Assumption 2.5(ii)]{tuomov2024online-eit} is applied in \cite[Lemma 2.11, before equation (24)]{tuomov2024online-eit}, again with $x=x^k$. The first estimate in \cite[Assumption 2.5(ii)]{tuomov2024online-eit} is applied in \cite[Theorem 2.12, equation (29)]{tuomov2024online-eit}, also with $x=x^k$.

    The remaining use of these constants is indirect. In \cite[Lemma 2.9]{tuomov2024online-eit}, they are used to prove positive definiteness of the testing operators, in particular of $\hat\Gamma_k$. For this step, it is enough to retain the coefficient ordering $\hat\gamma_{E,k}\ge \gamma_{E,k}$; the estimates themselves are not applied to arbitrary additional test points. This positive definiteness estimate is later used in \cite[Lemma 2.11]{tuomov2024online-eit} for
    the term $u^k-\bar u^k$.

    Hence, for the present verification, it is enough to prove the smoothness and growth inequalities at the realized algorithmic point $x^k$. The neighbourhood assumptions are still needed to ensure that the predictor, comparison point, and reference point lie in the region where the local estimates are valid.
}

Our final assumption relates the comparison set to the set of critical points of $H$, and sets bounds on step lengths.
To state the assumption, we need to introduce additional notation.
First of all, the first-order optimality conditions for the static problem $\min F_k+E_k+G_k \circ K_k$ for a single frame $k$ in \eqref{eq:pd:infinite-horizon-problem}, can be expressed with the general notation $u=(x,y)$ as \cite{clasonvalkonen2020nonsmooth}
\begin{equation}
    \label{eq:pd:h0}
    0 \in H_k(\this{\realoptu})
    \quad\text{for}\quad
    H_k(u) \defeq
    \begin{pmatrix}
        \subdiff F_k(x) + \grad E_k(x) + K_k^* y
        \\
        \subdiff G_k^*(y) - K_k x
    \end{pmatrix}
    .
\end{equation}
We also write
\[
    H_{n:m}(u^{n:m}) \defeq  H_n(u^n)\times\dots\times H_m(u^m)\subset \ProductSpace_{n:m}.
\]
\Cref{alg:pd:alg} may then be written in implicit form as the iterative solution of $\thisu$ from
\begin{gather*}
    0 \in \widetilde H_k(\thisu) + M_k(\thisu-\this\pdpredict),
    \quad
    \this\pdpredict \defeq P_k(\thisu),
    \shortintertext{where}
    \widetilde H_k(u) \defeq
    \begin{pmatrix}
        \subdiff F_k(x) + \thisestgrad + K_k^* y
        \\
        \subdiff G_k^*(y) - K_k x
    \end{pmatrix}
    \quad\text{and}\quad
    \Precond_k =
    \begin{pmatrix}
        \inv\tau \Id & - K_k^*
        \\
        -K_k         & \inv\sigma \Id
    \end{pmatrix}
    .
\end{gather*}
This formulation facilitates convergence and regret analysis.
We also define the growth operator
\[
    \GrowthOp \defeq
    \diag\bigl(
    (\gamma+\EkGrowth)  \Id,\;
    \rho \Id
    \bigr).
\]
Recalling that $\nexxt\pdpredict \defeq P_k(\thisu)$, we then define for\footnote{We use this notation when $A$ might not be positive semi-definite of self-adjoint, and therefore not define a (semi-)norm} $\fakenorm{x}_A^2 \defeq \iprod{Ax}{x}$ the \term{prediction errors}
\begin{equation}
    \label{eq:pd:prediction-error}
    \epsilon_{k+1}^{\dagger}(u^k, \bar u^{k:{k+1}})
    \defeq
    \frac{1}{2}\fakenorm{\nexxt\pdpredict-\overnextu}_{\tau\Precond_{k+1}}^2 - \frac{1}{2}\fakenorm{\this u-\this{\bar u}}_{\tau(\Precond_k+ \GrowthOp)}^2
    \quad\text{for all}\quad \bar u^{k:{k+1}} \in \PDpredictConstr_{k:k+1}.
\end{equation}
They measure the difference of deviation from a chosen comparison sequence between the current iterate and its prediction.
When $u^k$ and $\bar u^{k:{k+1}}$ are clear from the context, we abbreviate $\epsilon_{k+1}^{\dagger} \defeq \epsilon_{k+1}^{\dagger}(u^k, \bar u^{k:{k+1}})$.\begin{assumption}[{Critical point proximity \cite[Assumption 2.6]{tuomov2024online-eit}}]
    \label{ass:pd:add-local}
    Let $N\in\N$ and suppose \cref{ass:pd:smoothness} holds.
    Then:
    \begin{enumerate}[label=(\roman*),nosep]
        \item\label{ass:pd:add-local-i}
              For constants $\criticalprox_k>0$ ($1\le k\le N$), the primal-dual comparison set $ \PDpredictConstr_{1:N}$ stays close to the critical points of $H_{1:N}$ in the sense that
              \[
                  \PDpredictConstr_{1:N}
                  \subset
                  \Bigl\{
                  \bar u^{1:N}\in\ProductSpace_{1:N}
                  \ \Bigm|\
                  \inf_{\hat u^{1:N}\in H^{-1}_{1:N}(0)}
                  \norm{\bar u^k - \hat u^k}_{M_k+\GrowthOpMono_k}
                  \le \criticalprox_k
                  \text{ for }1\le k\le N
                  \Bigr\}.
              \]
        \item\label{ass:pd:add-local-init-iterate}
              Let
              $
                  \deltaCoeff_k
                  \defeq
                  1
                  +2\tau\min\{\gamma + \EkGrowthGlobal,\EkLossGlobal\}
                  -\tau\sigma\norm{K_k}^2
                  >0,
              $
              where positivity follows from \eqref{eq:pd:primaltestcond-positivity-local}.
              Then for some $\YoungCoeff_k,\Delta>0$, $\tilde\delta\in(0,\delta)$, and for all $\bar u^{0:N}\in\PDpredictConstr_{0:N}$, we have\footnote{%
                  The infimum here is over $0 \le n \le N-1$, while in \cite{tuomov2024online-eit} it is over $0 \le n \le N$.
                  The former is sufficient, as \cite[(23) or (2.19) in arXiv version, in the proof of Lemma 2.11]{tuomov2024online-eit} only has to hold for $1 \le n \le N$, and avoids an inconvenient indexing of $\bar u^{0:N}$ in this definition when $n=N$.
                  In either case, passing to the limit $N \upto \infty$, the same final bounds will be assumed.
              }
              \[
                  \begin{split}
                      0 < d_N(\bar u^{0:N})
                      \defeq
                      \inf_{0\le n\le N-1}\biggl(
                       &
                      \frac{\theta_{n+1}(\delta-\tilde\delta)^2}{\xi_{n+1}}
                      - \frac{1+\Perturbr}{\YoungCoeff_{n+1}}\tau\criticalprox_{n+1}^2
                      - 2\epsilon_{n+1}^{\dagger}(u^n, \bar u^{n:{n+1}})
                      \\
                       &
                      - \sum_{k=1}^n\!\left(
                      \frac{1+\Perturbr}{2\EkLipCoeffCoeff}\tau\criticalprox_k^2
                      + \epsilon_k^{\dagger}(u^{k-1}, \bar u^{k-1:k})
                      + \tau\ErrMono_k
                      \right)
                      \biggr)
                  \end{split}
              \]
              and
              \[
                  (2 - \EkLipCoeffCoeff)(\inv\xi_k + 1)(\delta-\tilde\delta)^2
                  + 2\inv\theta_k\tau\ErrGlobal_k
                  \le \tilde\delta^2.
              \]
              Here $\kappa$ arises from \cref{ass:pd:smoothness}\,\cref{item:pd:smoothness:steplength}.
    \end{enumerate}
\end{assumption}

The first part of the assumption ensures that the comparison set, affected by problem dynamics, does not stray too far from the sequences of static solutions.
It holds for some  $\criticalprox_{n+1}>0$ subject to boundedness assumptions on $\inv H_{1:N}(0)$.
If we can \emph{choose} $\PDpredictConstr_{1:N}$ to include $\inv H_{1:N}(0)$, then also $\criticalprox_{n+1}=0$.
This, however, affects the prediction errors \eqref{eq:pd:prediction-error}.
If the prediction errors, as well as the smoothness errors $\tau\ErrMono_k$ and $\ErrGlobal_k$ from \cref{ass:pd:smoothness} are small (compared to $\tau$) and with a bounded sum, the second part can also be made to hold for some choices of $\tau$, $\sigma$, $\tilde\delta$, $\xi_k$, and $\Delta$.
(Note that definition of the prediction error $\epsilon_{k+1}^{\dagger}(u^k, \bar u^{k:{k+1}})$ also includes multiplication by $\tau$.)

\subsection{Regret}

Our main regret results talks about “temporal sub-infimal convolutions” of the original objective, and the dual dynamics of the problem.
To state the result, besides the primal projection \eqref{eq:pd:PpredictConstr} of the set of comparison sequences, also define the dual projection
\[
    \DpredictConstr
    \defeq
    \left\{\bar y^{0:\infty}\in Y_{0:\infty}\ \middle|\ (\bar x^{0:\infty},\bar y^{0:\infty})\in\PDpredictConstr\right\}.
\]
Then define
\begin{gather*}
    \mathring G_{1:N}(z^{1:N})
    \defeq
    \sup_{\tilde y^{1:N} \in \DpredictConstr_{1:N}}
    \sum_{k=1}^N
    \bigl[
        \iprod{z^{k}}{\tilde y^k} - \tau G_k^*(\tilde y^{k})
        \bigr],
    \quad\text{and}\quad
    Q_{1:N}(x^{1:N}) \defeq \sum_{k=1}^{N} \tau[F_k + E_k](\thisx).
\end{gather*}
Also let\footnote{
    In \cite{tuomov2024online-eit}, incorrectly, $G_{1:N}(z^{1:N}) \defeq \sum_{k=1}^{N} \tau G_k(\this z)$, however, same as here, $G_{1:N}$ is only ever used in discussions.
    With the definitions here, $G_{1:N}(Kx^{1:N})= \sum_{k=1}^{N} [\tau G_k^*]^*(\thisx)=\sum_{k=1}^{N} \tau G_k(K_k \thisx)$, consistently with $Q_{1:N}$.
}
\[
    K_{1:N}x^{1:N} \defeq (\tau_1K_1x^1,\ldots,\tau_NK_Nx^N)
    \quad\text{and}\quad
    G_{1:N}(z^{1:N}) \defeq \sum_{k=1}^{N} [\tau G_k^*]^*(\this z)=\sum_{k=1}^{N} \tau G_k(\this z/\tau).
\]
We have
$
    \mathring G_{1:N}
    \le
    G_{1:N} \infconv \delta_{\DpredictConstr_{1:N}}^*.
$
Thus $\mathring G_{1:N}$ behaves as a ``sub-infimal convolution'' of $G_{1:N}$ with the dual temporal dynamics.
In our companion paper \cite{online-regtheory}, we show that, actually,
\[
    \mathring G_{1:N}
    =
    G_{1:N} \infconv \delta_{\closure \conv \DpredictConstr_{1:N}}^*,
\]
where $\infconv$ denotes the infimal convolution, and $\delta_{\closure \conv \DpredictConstr_{1:N}}$ the indicator function of the closed convex hull of $\DpredictConstr_{1:N}$.
Its conjugate is the so-called support function of this set.

\begin{example}
    Take $G_k = \alpha\norm{\freevar}_{2,1}$, and $K_k=\grad$ in a suitable finite element space. Thus $G_k \circ K_k$ models a frame-wise total variation regulariser for the regularisation parameter $\alpha$.
    Then $\mathring G_{1:N}(K_{1:N} \freevar)$ becomes a spatiotemporal regulariser that bridges between the frame-wise total variation regularisers, and the dual dynamics encoded in $\closure \conv \DpredictConstr_{1:N}$.
\end{example}

We may now state the main regret result for \cref{alg:pd:alg}.
Until each index $N$, it bounds the values of a modified primal objective function that, compared to an initial segment of \eqref{eq:online:problem}, replaces $G_{1:N} + \delta_{\primaldynamics_{1:N}}$ by $\mathring G_{1:N}$.

\begin{corollary}[{\cite[Corollary 2.13]{tuomov2024online-eit}}]
    \label{cor:pd:main}
    Let $N\ge1$, and suppose \cref{ass:pd:main,ass:pd:add-local,ass:pd:smoothness} holds for $u^N=(x^N,y^N)$ generated by \cref{alg:pd:alg}, with initialisation $u^0\in X_0\times Y_0$.
    If
    \begin{equation}
        \label{eq:pd:main-gap:local:init}
        \frac12\norm{u^0 - \optu^0}_{\tau(\Precond_0 + \GrowthOp_0)}^2
        \le d_N(\optu^{0:N}),
    \end{equation}
    then
    \[
        \begin{split}
            \bigl[Q_{1:N}(x^{1:N}) + \mathring G_{1:N}(K_{1:N}x^{1:N})\bigr]
             &
            \le
            \sup_{\optx^{1:N}\in\PpredictConstr_{1:N}}
            \bigl[Q_{1:N}(\optx^{1:N}) + \mathring G_{1:N}(K_{1:N}\optx^{1:N})\bigr]
            \\
            \MoveEqLeft[-1]
            +
            \sup_{\optu^{0:N}\in\PDpredictConstr_{0:N}}
            \Bigl(
            \tfrac12\norm{u^0-\optu^0}_{\tau\Precond_0+\Gamma_0}^2
            + c_N(\optx^{1:N}, y^{1:N})
            + e_N^\Sigma(u^{0:N-1}, \optu^{0:N})
            \Bigr),
        \end{split}
    \]
    where the cumulative prediction and gradient error
    \[
        e_N^\Sigma(u^{0:N-1},\optu^{0:N})
        \defeq
        \sum_{k=0}^{N-1}\!\left(
        \epsilon_{k+1}^{\dagger}(u^k,\optu^{k:k+1})
        + \tau\Err_{k+1}
        \right).
    \]
    and the \emph{comparison set solution discrepancy}
    \[
        c_N(\optx^{1:N}, y^{1:N})
        \defeq
        \inf_{\tilde y^{1:N}\in\DpredictConstr_{1:N}}
        \iprod{K_{1:N}\optx^{1:N}}{y^{1:N}-\tilde y^{1:N}}
        + G_{1:N}^*(\tilde y^{1:N}) - G_{1:N}^*(y^{1:N}).
    \]
\end{corollary}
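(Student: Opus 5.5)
This corollary is imported from \cite[Corollary 2.13]{tuomov2024online-eit}. The plan is therefore not to re-derive the regret analysis. Instead, I will check that the present, slightly weakened assumptions are enough for each step of the original proof, and then specialise to constant testing parameters $\phi_k\equiv1$, $\eta_k\equiv\tau$, $\psi_k\equiv\tau/\sigma$. The proof follows the usual testing argument.

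First, I write \cref{alg:pd:alg} in implicit form, $0\in\widetilde H_k(u^k)+M_k(u^k-\breve u^k)$, and test it with $\tau(u^k-\bar u^k)$ for a fixed comparison sequence $\bar u^{0:N}\in\PDpredictConstr_{0:N}$. The three-point identity \eqref{eq:intro:three-point} turns the preconditioner term into
\[
\tfrac12\|u^k-\bar u^k\|_{\tau M_k}^2-\tfrac12\|\breve u^k-\bar u^k\|_{\tau M_k}^2+\tfrac12\|u^k-\breve u^k\|_{\tau M_k}^2 .
\]
The nonsmooth terms are handled by convexity of $F_k$ and $G_k^*$, with strong convexity factors $\gamma$ and $\rho$. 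The smooth term $\langle\widetilde{\nabla E_k}(\breve x^k),x^k-\bar x^k\rangle$ is bounded below by the first a posteriori inequality of \cref{ass:pd:smoothness}\,\cref{item:pd:main-forwardstep-local}. This is legitimate only while $x^k\in B(\bar x^k,\delta)$ and $\bar x^k\in B(\breve x^k,\delta)$. The resulting $-\tfrac{\lambda_E}{2}\|x^k-\breve x^k\|^2$ is absorbed by $\tfrac12\|u^k-\breve u^k\|_{\tau M_k}^2$, using the step length condition \eqref{eq:pd:primaltestcond-positivity-local}.

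Second, the dual coupling terms produce $\langle K_k\bar x^k,y^k\rangle$ and $G_k^*(y^k)$. Taking the supremum over $\tilde y\in\DpredictConstr$ gives $\mathring G_{1:N}$ and leaves the discrepancy $c_N$. Summing over $k=1,\dots,N$ and rewriting $\tfrac12\|\breve u^{k+1}-\bar u^{k+1}\|^2_{\tau M_{k+1}}$ through the prediction errors \eqref{eq:pd:prediction-error} telescopes the distances. What remains is $\tfrac12\|u^0-\bar u^0\|^2$ plus $e_N^\Sigma$, and taking the supremum over comparison sequences gives the stated bound.

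The main obstacle is justifying that the iterates stay in the local neighbourhoods where the a posteriori estimates hold. This is an induction on $n$. The global a priori estimate \cref{item:pd:main-forwardstep-local0}, combined with the strict positivity $\theta_k>0$, gives a one-step bound placing $x^{n+1}$ within $\tilde\delta$ of a critical point $\hat x^{n+1}\in H_{n+1}^{-1}(0)$. This uses the second inequality of \cref{ass:pd:add-local}\,\cref{ass:pd:add-local-init-iterate}. The local monotonicity estimate with $\hat\gamma_E,\hat\lambda_E$ then bounds the accumulated distance $\|u^n-\hat u^n\|$. Here I use the proximity $\hat r_k$ from \cref{ass:pd:add-local}\,\cref{ass:pd:add-local-i}, with Young's inequality \eqref{eq:intro:young} weighted by $\xi$, $\Delta$, $\kappa$. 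The condition $d_N>0$ together with \eqref{eq:pd:main-gap:local:init} closes the induction.

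The point to check carefully, as explained in the footnote to \cref{ass:pd:smoothness}, is that every invocation of the smoothness inequalities in the cited Lemmas 2.10 and 2.11 and Theorem 2.12 happens at the realised iterate $x=x^k$ and at the $k$th component of the fixed comparison sequence. The weaker pointwise form assumed here therefore suffices. Likewise, the footnote to \cref{ass:pd:add-local} shows that restricting the infimum in $d_N$ to $n\le N-1$ loses nothing.
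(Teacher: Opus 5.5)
Your proposal is correct and takes the same route as the paper. The paper gives no independent proof: it imports \cite[Corollary 2.13]{tuomov2024online-eit} with $\phi_k\equiv 1$, $\eta_k\equiv\tau$, $\psi_k\equiv\tau/\sigma$, and justifies the weakened hypotheses exactly as you propose, via footnotes noting that the smoothness and growth inequalities are only invoked at $x=x^k$ and at the $k$th component of a fixed comparison sequence, and that the infimum over $0\le n\le N-1$ in $d_N$ suffices. The one point of that footnote you leave implicit is that the cited Lemma~2.9 (positive definiteness of the testing operators, in particular $\hat\Gamma_k$) uses the constants alone, requiring only the ordering $\hat\gamma_E\ge\gamma_E$ rather than the inequalities at additional test points.
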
\begin{remark}[{Comparison set solution discrepancy \cite[Remark 2.7 and Section 3]{d2024prediction}}]
    We have $c_N \le 0$ when $G_k \circ K_k=\alpha \norm{\grad\freevar}_{2,1}$ is the total variation regulariser, the dual initialisation achieves the total variation in the sense that $\iprod{y^0}{x^0}=\alpha \norm{\grad x^0}_{2,1}$ with $\norm{y^0}_{2,\infty} \le \alpha$, and the dual predictor is total variation preserving.
    Examples of such predictors are found in \cite{d2024prediction}.
\end{remark}

\section{Online differential estimation}
\label{sec:online-tracking}

Our idea for a low-cost estimate of $\thisdiff=J'(S_{k,u}(\this{\primalpredict}))S_{k,u}'(\this{\primalpredict})$ is grounded in the work of \cite{jensen2022nonsmooth,tuomov2024tracking} for static problems:
on each iteration of the outer \cref{alg:pd:alg} for the solution of \eqref{eq:online:problem}---for each new data frame as it arrives---we take only a \emph{single step} or few steps of standard solvers, such as Gauss–Seidel splitting, towards the solution of \eqref{eq:eit:cem} and its corresponding adjoint equation.
As we do this on each iteration, \emph{if} the data does not change too abrubtly between frames, we expect to still get convergence, or, in case of online optimisation, bounded regret.

We start by deriving the general scheme of approximation from adjoint equations in \cref{sec:online-tracking:adjoint}.
Then in \cref{sec:online-tracking:assumptions} we formulate an \term{online tracking assumption} that specific algorithms for the construction of the estimates have to satisfy.
In \cref{sec:online-tracking:smoothness} we use the tracking assumption to prove smoothness properties of the estimates $\thisestdiff$ of $\thisdiff$, in particular, to prove \cref{ass:pd:smoothness}. Finally, we study quadratic $E_k$ in detail in \cref{sec:online-tracking:smoothness-quadratic}.
In the next \cref{sec:inner-adjoint} we will prove the tracking assumption for specific inner and adjoint algorithms.
To ensure the applicability of our results to a broad class of problems---including PDE-constrained and dynamic inverse problems---we work in abstract normed spaces.
Our theory is also aplicable to online bilevel optimisation.

\subsection{Adjoint equations and the general approach}
\label{sec:online-tracking:adjoint}

Let \( J_k: U_k \to \R \) and \( S_{k,u}: X_k \to U_k \), for \(k \in \{1, \ldots, n\}\), be Fréchet differentiable mappings on normed spaces \(X_k\) and \(U_k\).
We estimate
\begin{equation}
    \label{eq:online:derivative}
    \thisdiff
    = J_k'(S_{k,u}(\predicted{x}{k}))\, S_{k,u}'(\predicted{x}{k})
\end{equation}
by approximating \( S_{k,u}(\predicted{x}{k}) \) with a computable state \( u^k \in U_k \) and its derivative \( S_{k,u}'(\predicted{x}{k}) \) by a \( p^k \in \linear(X_k; U_k) \).
However, as $ p^k \approx S_{k,u}'(\predicted{x}{k}) \in \linear(X_k; U_k)$ can be very high-dimensional, we want to avoid constructing it directly.
Instead, we seek to only construct the necessary projections $w^k$.
This is illustrated by the next lemma, where $T_k$ can model, e.g., a PDE or the first-order optimality conditions of the inner optimisation problem \eqref{eq:tracking:implicit-inner} (in which case $T_k = E_k'$), both parametrised by $x$.
The proof follows standard techniques for deriving reduced adjoint equations of PDEs; see, e.g., \cite[§1.6.2]{hinze2009pde}.\begin{lemma}[Reduced adjoint]
    \label{ex:tracking:pde-adjoint}
    Let $k \in \N$ and $T_k: U_k \times X_k \to W_k^*$ be Fréchet differentiable, and suppose that $S_{k,u}(x)$ arises from the satisfaction of \eqref{eq:tracking:implicit-inner}.
    If $S_{k,u}$ is Fréchet differentiable in a neighbourhood of $x$, then
    \[
        E_k'(x) = w \diffwrt{T_k}{x}(S_{k,u}(x), x),
    \]
    where $ w \in W_k \hookrightarrow W_k^{**}$ (assuming sufficient regularity that a solution in $W_k$ exists) is a solution of the \term{reduced adjoint}
    \begin{equation}
        \label{eq:tracking:reduced-adjoint}
        w \diffwrt{T_k}{u}(S_{k,u}(x), x) + J_k'( S_{k,u}(x)) = 0.
    \end{equation}
\end{lemma}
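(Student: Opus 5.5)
The plan is the standard implicit-differentiation/adjoint argument. Fix $x$ and write $u \defeq S_{k,u}(x)$. Since $T_k(S_{k,u}(x'),x')=0$ holds identically for all $x'$ in the neighbourhood of $x$ where $S_{k,u}$ is Fréchet differentiable, I will differentiate this identity by the chain rule. As $T_k$ and $S_{k,u}$ are both Fréchet differentiable, this gives, as an identity in $\linear(X_k;W_k^*)$,
\[
    \diffwrt{T_k}{u}(u,x)\,S_{k,u}'(x) + \diffwrt{T_k}{x}(u,x) = 0.
\]
In parallel, the chain rule applied to $E_k = J_k\circ S_{k,u}$ gives $E_k'(x) = J_k'(u)\,S_{k,u}'(x) \in X_k^*$.

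The key step is to eliminate the possibly high-dimensional $S_{k,u}'(x)$ using the adjoint solution. Take $w\in W_k$ solving \eqref{eq:tracking:reduced-adjoint}. Here $w\,\diffwrt{T_k}{u}(u,x)$ is read as the functional $h \mapsto \dualprod{\diffwrt{T_k}{u}(u,x)h}{w}$ on $U_k$, using the canonical embedding $W_k\hookrightarrow W_k^{**}$; equivalently, it is $\diffwrt{T_k}{u}(u,x)^* w$ restricted to $U_k$. The adjoint equation then reads $J_k'(u) = -w\,\diffwrt{T_k}{u}(u,x)$ as elements of $U_k^*$. Composing both sides on the right with $S_{k,u}'(x)$ and inserting the differentiated constraint gives
\[
    E_k'(x) = -w\,\diffwrt{T_k}{u}(u,x)S_{k,u}'(x) = w\,\diffwrt{T_k}{x}(u,x).
\]
To make this rigorous, I will test against an arbitrary $h\in X_k$. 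Applying $w$ to the $W_k^*$-valued identity evaluated at $h$ is just linearity of the dual pairing.

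I do not expect a genuine obstacle. The only delicate point is bookkeeping: keeping track of the spaces, namely $X_k^*$, $U_k^*$, $W_k^*$, and the pairing of $w\in W_k$ with elements of $W_k^*$. One must also note that existence of $w$ is part of the hypothesis, as the statement already grants, so no invertibility of $\diffwrt{T_k}{u}$ has to be established.
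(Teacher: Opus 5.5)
Your proposal is correct and is essentially the paper's proof. Both arguments differentiate the constraint identity to get the basic adjoint $\diffwrt{T_k}{u}(S_{k,u}(x),x)S_{k,u}'(x)+\diffwrt{T_k}{x}(S_{k,u}(x),x)=0$, apply $w$ from the left, and compose the reduced adjoint with $S_{k,u}'(x)$ from the right; your testing against $h\in X_k$ with the explicit pairing conventions only spells out the bookkeeping that the paper leaves implicit.
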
\begin{proof}
    By implicit differentiation and \eqref{eq:tracking:implicit-inner} holding for $x$ in neighbourhood of $x$, we get the basic adjoint
    \begin{equation}
        \label{eq:tracking:basic-adjoint}
        0 = \diffwrt{T_k}{u} ( S_{k,u}(x), x )S_{k,u}'(x) + \diffwrt{T_k}{x}(S_{k,u}(x), x),
    \end{equation}
    where
    $S_{k,u}'(x) \in \linear(X_k; U_k)$,
    $\diffwrt{T_k}{u}(S_{k,u}(x), x) \in \linear(U_k; W_k^*)$, and
    $\diffwrt{T_k}{x}(S_{k,u}(x), x) \in \linear(X_k; W_k^*)$.
    Applying $w$ from the left to \eqref{eq:tracking:basic-adjoint}, and  $S_{k,u}'(x)$ from the right to \eqref{eq:tracking:reduced-adjoint}, we deduce that
    $E_k'(x) = J_k'( S_{k,u}(x))S_{k,u}'(x) = w \diffwrt{T_k}{x}(S_{k,u}(x), x)$.
\end{proof}

In practice, we will take $w^k \in W_k$ as an approximate operator splitting solution of \eqref{eq:tracking:reduced-adjoint}, instantiated at $u^k \approx S_{k,u}(\predicted{x}{k})$ in place of $S_{k,u}(\predicted{x}{k})$, and then set
\[
    \thisestdiff \defeq w^k \diffwrt{T_k}{x}(u^k, \predicted{x}{k}) \approx J_k'(S_{k,u}(\predicted{x}{k}))S_{k,u}'(\predicted{x}{k}) = E_k'(\predicted{x}{k}).
\]
When $X_k$ is a real Hilbert space (such as $\R^n$), we write $\thisestgrad$ for the Riesz representation of $\thisestdiff$.

\textbf{This is our recursively constructed differential estimate}, which we wish to use in \cref{alg:pd:alg}.
We just need to show that it satisfies \cref{ass:pd:smoothness}.
We do this in \cref{sec:online-tracking:smoothness}, given \emph{tracking estimates} for $u^k$ and $w^k$, derived from the contractivity of splitting methods as in \cite{jensen2022nonsmooth,suonpera2024general}.
We state those tracking estimates in \cref{sec:online-tracking:assumptions}, and provide examples of their satisfaction in \cref{sec:inner-adjoint}.

\begin{example}
    \label{ex:adjoint:eit}
    We apply \cref{ex:tracking:pde-adjoint} to the EIT problem discussed in \cref{sec:intro}. Let $\sigma\in L^\infty(\Omega)$ and $U^k \in \R^{\Nelec}$ be fixed, and for simplicity assume $\Nmeas = 1$. Define the mixed state space $Y\defeq H^1(\Omega)\times\R^{\Nelec}$, $y=(u,I)\in Y$. Introduce the constraint operator
    \[
        T_k:Y\times L^\infty(\Omega)\to Y^*\quad
        \iprod{ T_k(y,\sigma)}{w}
        \defeq B_\sigma(y,w)-L_{U^k}(w),
    \]
    where $B_\sigma:(H^1(\Omega)\times\mathbb{R}^{\Nelec})\times
        (H^1(\Omega)\times\mathbb{R}^{\Nelec}) \to \mathbb{R}$
    for $w = (v,J) \in H^1(\Omega) \times \R^{\Nelec}$ is given by
    \[
        \begin{aligned}
            B_\sigma((u,I),(v,J))
             & \defeq
            \int_\Omega \sigma\nabla u\cdot\nabla vdx
            +\sum_{i=1}^{\Nelec} \frac{1}{\zeta_i}\int_{e_i} uvds
            -\sum_{i=1}^{\Nelec} \frac{J_i}{\zeta_i}  \int_{e_i} uds
            +\sum_{i=1}^{\Nelec} J_i I_i
        \end{aligned}
    \]
    and $L_{U^k}: H^1(\Omega)\times\mathbb{R}^{\Nelec} \to \R$ by
    \begin{equation}
        L_{U^k}(v,J)
        \defeq
        \sum_{i=1}^{\Nelec} \frac{1}{\zeta_i}  \int_{e_i} U_i(v-J_i)ds.
    \end{equation}
    The weak solution of \eqref{eq:eit:cem}, i.e. the forward state $S_{k,u}(\sigma)=y(\sigma, U^k)$, satisfies $T_k(S_{k,u}(\sigma),\sigma)=0$. Moreover
    $
        \iprod{\partial_y T_k(y,\sigma)[h_y]}{w}
        = B_\sigma(h_y,w)
    $
    and
    $
        \iprod{\partial_\sigma T_kk(y,\sigma)[h_\sigma]}{w}
        =\int_\Omega h_\sigma \nabla u\cdot\nabla v dx
    $
    to all directions $h_y \in Y$ and $h_\sigma \in L^\infty(\Omega)$. Taking $E_k(\sigma)=J_k(S_{k,u}(\sigma))$ with $J_k(y)=\frac12 \norm{\WOp(I - \EITmeas^k)}_2^2$ gives the data misfit term of \eqref{eq:eit:functionals}. For this term, \cref{ex:tracking:pde-adjoint} yields an adjoint state $w(\sigma, U^k)=(v,V)\in Y$ solving
    \[
        B_\sigma(\tilde y,w)
        =
        -J_k'(S_{k,u}(\sigma))[\tilde y]
        =
        \tilde I^\top  \Sigma^{-1} (\EITmeas^k - I(\sigma,U^k)) ,
        \quad\text{for all}\quad\,\tilde y = (\tilde u, \tilde I)\in Y.
    \]
    The differential is then $E_k'(\sigma)=\nabla v\cdot\nabla u$.
\end{example}

\subsection{Basic constructions and assumptions}
\label{sec:online-tracking:assumptions}

Recall that our goal is, at each time and iteration index $k$, to construct $(u^k, w^k) \approx \big(S_{k,u}(\predicted{x}{k}), S_{k,w}(\predicted{x}{k})\big)$, since the latter may be computationally infeasible or expensive to compute. Indeed, $S_{k,u}$ is defined as the solution of an inner problem, such as a PDE or an optimization problem, and solving it exactly at every iteration can be expensive.
Therefore, in practice, we work with approximate solutions $(u^k, w^k)$.
This naturally introduces approximation errors, whose behavior must be controlled in order to ensure stability and convergence of the overall method. The following principal assumption formalises this requirement in the online setting, and can be seen as a counterpart of the corresponding offline assumptions in \cite{suonpera2024general,tuomov2024tracking}.\begin{assumption}
    \label{ass:tracking:main}
    For all $k \in \N$, let the normed space $X_k$ be equipped with a semi-norm $\norm{\cdot}_{k, \circ}$, and $X_k^*$ with the corresponding support function (see \cite[Section 2]{tuomov2024tracking})
    \[
        \norm{x^*}_{k,*} \defeq \sup\{ \dualprod{x^*}{x}_{X^*,X} \mid  x \in X,\, \norm{x}_{k,\circ} \le 1 \}.
    \]
    Also let the abstract spaces $U_k$, and $W_k$ be equipped with
    the distance functions $d_{U_k}: U_k \times U_k \to [0,\infty]$, and $d_{W_k}: W_k \times W_k \to [0,\infty]$.
    Let $\localset_k \subset X_k$, and let
    $S_{k,u}: X_k \to U_k$ and
    $S_{k,w}: X_k \to W_k$
    denote the inner and adjoint solution maps, respectively.
    Then, on each iteration $k \in \N$, given $((x^n, \predicted{x}{n}, u^n, w^n))_{n=0}^k \in \prod_{n=0}^k \localset_n\times \localset_n \times U_n \times W_n$ and the prediction $\predicted{x}{k+1} \in \localset_{k+1}$:
    \begin{enumerate}[label=(\roman*)]
        \item\label{item:tracking:main:inner-tracking}
              An \term{inner algorithm} produces $\nextu \in U_{k+1}$ satisfying, for some $\pi_u>0$, $\kappa_u>1$, $\innerError{k} \geq 0$, \emph{when $k \ge 1$}, the inner tracking inequality
              \begin{align*}
                  \kappa_u \dis{U_{k+1}}{u^{k+1}}{S_{k+1,u}(\predicted{x}{k+1})}
                   &
                  \le
                  \dis{U_k}{u^{k}}{S_{k,u}(\predicted{x}{k})}
                  + \pi_u\norm{x^{k} - \predicted{x}{k}}_{k, \circ} + \innerError{k}.
              \end{align*}

        \item\label{item:tracking:main:adjoint-tracking}
              An \term{adjoint algorithm} produces $\nexxt w \in W_{k+1}$ satisfying  for some ${\primaldifffact}, \pi_w>0$, $\kappa_w>1$, $\adjointError{k} \geq 0$, \emph{when $k \ge 1$}, the adjoint tracking inequality
              \begin{align*}
                  \kappa_w \dis{W_{k+1}}{\nexxt w}{S_{k+1,w}(\predicted{x}{k+1})}
                   &
                  \le
                  \dis{W_k}{\this w}{S_{k,w}(\predicted{x}{k})}
                  + {\primaldifffact} \dis{U_{k+1}}{u^{k+1}}{S_{k+1,u}(\predicted{x}{k+1})}
                  \\
                  \MoveEqLeft[-1]
                  + \pi_w\norm{x^{k} -  \predicted{x}{k}}_{k, \circ}
                  + \adjointError{k}.
              \end{align*}

        \item\label{item:tracking:main:differential-transformation}
              A \term{differential transformation}, $(u^{k+1}, w^{k+1})\in U_{k+1}\times W_{k+1}$, given  produces $\nextestdiff \in X_{k+1}^*$ that satisfies for some $\alpha_u,\alpha_w \ge 0$, $\transformError{k} \geq 0$ the bound
              \begin{align*}
                  \norm{\nextestdiff - \nextdiff}_{k+1, *}
                   &
                  \le
                  \alpha_u \dis{U_{k+1}}{\nexxt u} {S_{k+1,u}(\predicted{x}{k+1})}
                  + \alpha_w \dis{W_{k+1}}{\nexxt w}{S_{k+1,w}(\predicted{x}{k+1})} + \transformError{k}.
              \end{align*}
    \end{enumerate}
\end{assumption}\begin{remark}
    We observe regarding \cref{ass:tracking:main}:
    \begin{enumerate}
        \item If $\norm{\freevar}_{k,\circ}$ is a norm, then $\norm{\freevar}_{k,*}$ is the dual norm.
        \item The inner and adjoint tracking conditions
              \cref{item:tracking:main:inner-tracking,item:tracking:main:adjoint-tracking}
              are parameter change aware contractivity conditions for the inner and adjoint algorithms:
              if $\predicted{x}{k}=\thisx$ and $S_{k+1,u}=S_{k,u}$ (i.e., there is no data updates between frames), the former reduces to a standard contractivity condition.
              The differential transformation condition
              \cref{item:tracking:main:differential-transformation}
              bounds the construction error of $\nextestdiff$ in terms of the tracking errors of the iterates relative to the corresponding solution maps.
        \item  The distance functions $d_{U_k}, d_{W_k}$ will in the simplest case be given by norms or semi-norms. More generally, they may be problem-adapted measures of discrepancy between iterates and solution maps. Their exact form plays no role in the theory of this section.

        \item The spaces $X_k$ and $U_k$ that depend on the time and iteration index $k$ allow our framework to accommodate dynamic online optimisation
              \cite{hall13dynamical,tuomov-better-predict,tuomov2024online-eit}
              with growing data sets.

        \item For the inner and adjoint variables, the tracking inequalities are formulated starting from $k \ge 1$, and thus do not explicitly involve the initial iterates. At each iteration, the approximate inner and adjoint variables are computed using the prediction $\predicted{x}{k}$ and compared against the corresponding exact solution maps $S_{k,u}(\predicted{x}{k})$ and $S_{k,w}(\predicted{x}{k})$.
              The role of the prediction is to provide a consistent reference point across iterations, allowing the tracking errors to be propagated even when the underlying spaces vary with $k$. The initial discrepancies at the first iteration enter the final estimates, but can be controlled by computing the first step with sufficiently high accuracy. For contractive algorithms, these initial errors are naturally reduced relative to the initialisation.
    \end{enumerate}
\end{remark}

\subsection{Smoothness of differential estimates}
\label{sec:online-tracking:smoothness}

In this section, we derive descent and Lipschitz type inequalities for the differential estimates $\estdiff{E}{k}$, given the tracking estimates.
These results are straightforward consequences of the scalar tracking estimates of \cref{sec:technical}.
We recall that if $E'$ is $L$-Lipschitz, then it satisfies the three point inequality \cite[Corollary~7.2]{clasonvalkonen2020nonsmooth}
\begin{equation}
    \label{eq:smoothness:three-point-descent}
    \dualprod{\thisdiff}{x - \bar x}_{X_k^*,X_k} \ge E_k(x) - E_k(\bar x) - \frac{L}{2} \norm{x - \predicted{x}{k}}^2_{k, \circ}, \quad\text{for all}\quad  x,  \predicted{x}{k}, \bar x\in X_k,  k\geq 0,
\end{equation}
Our goal is to derive similar estimates for $\thisestdiff$.
Indeed, such an estimate arises by summing the claim of the next lemma with \eqref{eq:smoothness:three-point-descent}\begin{lemma}
    \label{lemma:tracking:smoothness-pre-result}
    Suppose that \cref{ass:tracking:main} holds, and that $((x^n, \predicted{x}{n}, u^n, w^n))_{n=0}^k \in \prod_{n=0}^k \localset_n\times \localset_n \times U_n \times W_n$ as well as $\predicted{x}{k+1} \in \localset_{k+1}$ for some $k \in \N$.
    Then, for any $\tilde\gamma>0$, and $x, \bar x\in X_{k+1}$, we have
    \begin{equation}
        \label{eq:tracking:smoothness-pre-result}
        \begin{split}
            \dualprod{\estdiff{E}{k+1}(\predicted{x}{k+1}) - E_{k+1}'(\predicted{x}{k+1})}{x - \bar x}_{X_{k+1}^*,X_{k+1}}
             &
            \ge
            - \frac{\tilde\gamma}{2} \norm{x - \bar x}^2_{k+1,\circ}
            -\frac{5}{4\tilde\gamma}\trackingressum[1]^2 \norm{\predicted{x}{k+1} - x^{k+1}}_{k+1,\circ}^2
            - \frac{1}{2\tilde\gamma} \combinederror_k,
        \end{split}
    \end{equation}
    where
    \[
        \combinederror_k\defeq \frac{5}{2}\trackingressum[1]^2 \left(\max\!\left( \frac{1}{\pi_u}\,\innerError{k}, \frac{1}{\pi_w}\,\adjointError{k} \right)\right)^2
        + e_{1,k}
    \]
    for some $e_{1,k} \in \R$ and $\trackingressum[1] \ge 0$ that,
    with $\kappa \defeq \min (\kappa_u, \kappa_w)$ and $\MAX\kappa \defeq \max (\kappa_u, \kappa_w)$, satisfy
    \begin{gather}
        \label{eq:tracking:smoothness-pre-result:e-bound}
        \begin{split}
            \sum_{n=0}^{k} e_{1,n}
             &
            \le
            \Psi_1
            \defeq
            \frac{5}{4}\left(
            \frac{d_{U_1}^1(u^1, S_{1,u}(\predicted{x}{1}))}{\pi_u} \bigg(\frac{\trackingressum[1]\alpha_u\kappa}{\kappa-1} + \frac{\trackingressum[1]\alpha_w{\primaldifffact}}{(\kappa-1)^2}\bigg)
            +
            \frac{d_{W_1}^2(w^1, S_{1,w}(\predicted{x}{1}))}{\pi_w} \bigg(\frac{\trackingressum[1]\alpha_w\kappa}{\kappa-1}\bigg)
            +
            \sum_{n=0}^{k} \transformError{n}^2
            \right)
        \end{split}
        \shortintertext{and}
        \label{eq:tracking:smoothness-pre-result:varrho-bound}
        \begin{split}
            \trackingressum[1]
             &
            \le
            \frac{(\alpha_u\pi_u+\alpha_w\pi_w)\kappa\MAX\kappa}{\kappa-1}
            + \frac{\alpha_w {\primaldifffact}\pi_u\MAX\kappa}{(\kappa-1)^2}.
        \end{split}
    \end{gather}
\end{lemma}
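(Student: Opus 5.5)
We would start from the dual-norm bound of \cref{ass:tracking:main}\,\cref{item:tracking:main:differential-transformation} and Young's inequality \eqref{eq:intro:young}. By definition of the support function $\norm{\freevar}_{k+1,*}$,
\[
    \dualprod{\nextestdiff - \nextdiff}{x-\bar x}
    \ge -\norm{\nextestdiff - \nextdiff}_{k+1,*}\norm{x-\bar x}_{k+1,\circ}
    \ge -\frac{\tilde\gamma}{2}\norm{x-\bar x}_{k+1,\circ}^2 - \frac{1}{2\tilde\gamma}\norm{\nextestdiff - \nextdiff}_{k+1,*}^2 .
\]
(If $\norm{x-\bar x}_{k+1,\circ}=0$ but the dual pairing is nonzero, the support function is infinite; we restrict to the case of finite dual norm, as the transformation bound guarantees.) It then remains to bound
\[
    D_{k+1}\defeq\alpha_u d_{U_{k+1}} + \alpha_w d_{W_{k+1}} + \transformError{k},
\]
where $d_{U_{k+1}}$ and $d_{W_{k+1}}$ abbreviate the two tracking distances at frame $k+1$. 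We need
\[
    D_{k+1}^2 \le \tfrac52\trackingressum[1]^2\norm{\predicted{x}{k+1}-x^{k+1}}^2 + \combinederror_k .
\]

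Second, we unroll the tracking recursions. Set $a_n\defeq d_{U_n}(u^n,S_{n,u}(\predicted{x}{n}))$, $b_n\defeq d_{W_n}(w^n,S_{n,w}(\predicted{x}{n}))$, and $r_n\defeq\norm{x^n-\predicted{x}{n}}_{n,\circ}$. From \cref{item:tracking:main:inner-tracking}, $a_{k+1}\le \kappa_u^{-1}(a_k+\pi_u r_k+\innerError{k})$. Iterating gives
\[
    a_{k+1}\le \kappa_u^{-k}a_1+\sum_{n=1}^k\kappa_u^{-(k+1-n)}(\pi_u r_n+\innerError{n}).
\]
Substituting this into the adjoint recursion of \cref{item:tracking:main:adjoint-tracking}, whose forcing term is $\primaldifffact a_{n+1}+\pi_w r_n+\adjointError{n}$, produces a double geometric sum for $b_{k+1}$. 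Collapsing it with $\sum_j \kappa^{-j}\le \kappa/(\kappa-1)$ (and the squared version) yields exactly the coefficients $\kappa/(\kappa-1)$ and $\primaldifffact/(\kappa-1)^2$ that appear in \eqref{eq:tracking:smoothness-pre-result:varrho-bound} and $\Psi_1$. The factor $\MAX\kappa$ enters when the index is shifted so that the current residual $r_{k+1}$ is included.

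The third step is the one I expect to be the main obstacle, and is presumably what the scalar tracking estimates of \cref{sec:technical} supply. The lemma is stated with only the current residual $\norm{\predicted{x}{k+1}-x^{k+1}}$ and summable errors $e_{1,n}$. We must therefore absorb the past residuals $r_n$, $n\le k$, and the initial errors $a_1,b_1$ into $e_{1,k}$, with $e_{1,k}$ allowed to be negative, so that only the telescoped total $\sum_n e_{1,n}\le\Psi_1$ is controlled. The approach is to define $e_{1,k}$ as $D_{k+1}^2$ minus the explicit terms $\tfrac52\trackingressum[1]^2 r_{k+1}^2$ and the squared $\epsilon$-term. We then expand $D_{k+1}^2$ with $(\sum_{i=1}^5 c_i)^2\le 5\sum_i c_i^2$, which explains the constants $5/4$ and $5/2$. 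The five groups are initial inner error, initial adjoint error, residuals, inner/adjoint errors combined via $\max(\innerError{k}/\pi_u,\adjointError{k}/\pi_w)$, and transformation error.

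Next, we apply Cauchy--Schwarz with geometric weights to each convolution sum. This gives, for example,
\[
    \Bigl(\sum_j\kappa^{-j}c_{k-j}\Bigr)^2\le\frac{\kappa}{\kappa-1}\sum_j\kappa^{-j}c_{k-j}^2 .
\]
We then sum over $k$ and swap the order of summation. The past-residual contributions are bounded by a multiple of the current-residual ones, and the excess is counted negatively in $e_{1,k}$. The initial contributions give geometric series bounded by the $a_1,b_1$ terms in $\Psi_1$, and the transformation errors give $\sum\transformError{n}^2$. The delicate point is the bookkeeping needed to land exactly on the stated constants; if exact matching fails, I would accept slightly different constants of the same structure.
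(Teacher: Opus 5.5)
Your architecture is the paper's. The paper uses the same Young step for the dual pairing. It then folds the errors into the residuals, $\check\varrho_n\defeq\varrho_n+\epsilon_n$ with $\epsilon_n\defeq\max(\innerError{n}/\pi_u,\adjointError{n}/\pi_w)$. This is legitimate because $\pi_u\check\varrho_n\ge\pi_u\varrho_n+\innerError{n}$ and $\pi_w\check\varrho_n\ge\pi_w\varrho_n+\adjointError{n}$. It thus reduces to \cref{ass:online-scalar-tracking:main} and invokes \cref{lemma:weaker:error-sum} with $p=1$, whose proof is the unroll, weighted Cauchy--Schwarz, swap-the-sums and negative-slack bookkeeping you outline.

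The gap is in the step you yourself flag, and it is not cosmetic: these constants feed $\bar\lambda_E$, $\lambda_E$, $\hat\lambda_E$ in \cref{thm:tracking:smoothness-verification}. The split $(\sum_{i=1}^5 c_i)^2\le 5\sum_i c_i^2$ does not explain $5/4$ and $5/2$; it puts a factor $5$ on every group. Take even the best weights $\psi_m$ on the residual convolution. After summing over $k$, each past $r_{j+1}^2$ then carries total weight $5(\sum_{m\ge1}\psi_m)^2$, whereas the statement only budgets $\tfrac52\varsigma_1^2$ for it. For $\alpha_w=0$ and $\kappa_u=\kappa_w=\kappa$ the ratio is $2/\kappa^4$, so your scheme cannot close for $\kappa<2^{1/4}$. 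Your geometric weights $\kappa^{-j}$ produce yet another, unrelated constant. As written, you would prove a variant of the lemma, not the lemma.

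The missing idea is a single weighted Cauchy--Schwarz (Young) inequality whose weights are proportional to the coefficients themselves and normalised by $\varsigma_1$. By \cref{lemma:weaker:generic-recursion}, $R\defeq\alpha_u d^u_{k+1}+\alpha_w d^w_{k+1}\le A_k d^u_1+B_k d^w_1+\sum_{j=0}^{k-1}\psi_{k-j}\check\varrho_{j+1}$. Take $\theta^u_k=\pi_uA_k/\varsigma_1$, $\theta^w_k=\pi_wB_k/\varsigma_1$ and $\theta_{k,j}=\psi_{k-j}/\varsigma_1$.

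\begin{itemize}
\item Since $\pi_uA_k+\pi_wB_k=\psi_k$, the weights sum to $\varsigma_1^{-1}(\psi_k+\sum_{j=1}^k\psi_j)\le1$. This uses $\psi_k\le\overline\kappa\,\psi_{k+1}$ (from $\iota_k\le\kappa_w\iota_{k+1}$) and $\varsigma_1=\overline\kappa\sum_j\psi_j$; this is what the factor $\overline\kappa$ pays for.
\item Hence $R^2\le\varsigma_1\bigl(\tfrac{A_k}{\pi_u}(d^u_1)^2+\tfrac{B_k}{\pi_w}(d^w_1)^2+\sum_j\psi_{k-j}\check\varrho_{j+1}^2\bigr)$.
\item Peeling off the transformation error with $(R+\transformError{k})^2\le\tfrac54R^2+5\transformError{k}^2$ is the source of $5/4$.
\item Summing over $k$ and swapping, each $\check\varrho_{j+1}^2$ collects at most $\tfrac54\varsigma_1\sum_{m\ge1}\psi_m\le\tfrac54\varsigma_1^2$. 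This is cancelled exactly by defining $e_{1,k}$ with the subtraction $-\tfrac54\varsigma_1^2\check\varrho_{k+1}^2$.
\item The bounds $\sum_k\kappa_u^{-k}\le\kappa/(\kappa-1)$ and $\sum_k\iota_k\le(\kappa-1)^{-2}$ produce the initial-error terms of $\Psi_1$.
\item Only at the very end does one split $\check\varrho_{k+1}^2\le2\varrho_{k+1}^2+2\epsilon_{k+1}^2$, which gives the $5/2$ and the max-term in $\combinederror_k$.
\end{itemize}

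Keeping the inner/adjoint errors as a separate group, as you propose, needs a second convolution and again costs constants. Folding them into $\check\varrho_n$ lets past errors be absorbed exactly like past residuals.
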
\begin{proof}
    By \cite[Lemma 2.1 (iii)]{tuomov2024tracking}, we have $\dualprod{x^*}{x}_{X_k^*,X_k} \le \frac{1}{2}\norm{x^*}_{k,*}^2 + \frac{1}{2}\norm{x}_{k,\circ}^2$ for all $x \in X_k$ and $x^* \in X_k^*$.
    Hence,
    \begin{equation}
        \label{eq:coro00}
        \dualprod{\estdiff{E}{k+1}(\predicted{x}{k+1}) - E_{k+1}'(\predicted{x}{k+1})}{x - \bar x}_{X_{k+1}^*,X_{k+1}}
        \ge
        - \frac{1}{2\tilde\gamma}\norm{\estdiff{E}{k+1}(\predicted{x}{k+1}) - E_{k+1}'(\predicted{x}{k+1})}^2_{k+1, *} - \frac{\tilde\gamma}{2}\norm{x - \bar x}^2_{k+1, \circ}.
    \end{equation}
    Let
    \begin{align*}
        d_k^u          & \defeq d_{U_k}(u^k, S_{k,u}(\predicted{x}{k})),
                       &
        \varrho_k      & \defeq \norm{\predicted{x}{k} - x^k}_{k,\circ},
                       &
        \epsilon_k     & \defeq \max\left(
        \innerError{k}/\pi_u,
        \adjointError{k}/\pi_w
        \right),
        \\
        d_k^w          & \defeq d_{W_k}(w^k, S_{k,w}(\predicted{x}{k})),
                       &
        \rlap{\text{and}}
                       &
                       &
        \widetilde d_k & \defeq \norm{\estdiff{E}{k+1}(\predicted{x}{k+1}) - E_{k+1}'(\predicted{x}{k+1})}_{k+1, *}.
    \end{align*}
    Then \cref{ass:tracking:main} and our assumptions  $((x^n, \predicted{x}{n}, u^n, w^n))_{n=0}^k \in \prod_{n=0}^k \localset_n\times \localset_n \times U_n \times W_n$ as well as $\predicted{x}{k+1} \in \localset_{k+1}$ imply \cref{ass:online-scalar-tracking:main} for the same $ \transformError{k}$.
    Thus, \cref{lemma:weaker:error-sum} gives
    \[
        \begin{split}
            \norm{\estdiff{E}{k+1}(\predicted{x}{k+1}) -  E_{k+1}'(\predicted{x}{k+1})}_{k+1,*}^2
             &
            \le
            \frac{5}{4}\trackingressum[1]^2(\varrho_{k+1}+\epsilon_{k+1})^2 + e_{1,k}
            \\
             &
            \le
            \frac{5}{2}\trackingressum[1]^2 \varrho_{k+1}^2
            + \frac{5}{2}\trackingressum[1]^2\left(\max\!\left( \frac{1}{\pi_u}\,\innerError{k}, \frac{1}{\pi_w}\,\adjointError{k} \right)\right)^2
            + e_{1,k}
            \\
             & =
            \frac{5}{2}\trackingressum[1]^2 \norm{\predicted{x}{k+1} - x^{k+1}}_{k+1,\circ}^2
            + \combinederror_k.
        \end{split}
    \]
    Combining this inequality with \cref{eq:coro00} proves \eqref{eq:tracking:smoothness-pre-result}.
    The bounds \cref{eq:tracking:smoothness-pre-result:e-bound,eq:tracking:smoothness-pre-result:varrho-bound} follow from \cref{lemma:weaker:inner-product-error-estimate,lemma:weaker:error-sum}.
\end{proof}

The next result, which verifies \cref{ass:pd:smoothness}, relates the error $\combinederror_k$ to the error expression in the regret \cref{cor:pd:main} for the outer method. We will further develop the complete error expression in \cref{rem:inner-adjoint:error}.
The result depends on the exact differential each $E_k$ essentially satisfying \cref{ass:pd:smoothness}.
We will in the next \cref{sec:online-tracking:smoothness-quadratic} verify this for quadratic energies.

\begin{theorem}[{Verification of \cref{ass:pd:smoothness}}]
    \label{thm:tracking:smoothness-verification}
    Let $k \ge 1$. Suppose \cref{ass:tracking:main} holds and that $((x^n, \predicted{x}{n}, u^n, w^n))_{n=0}^{k-1} \in \prod_{n=0}^{k-1} \localset_n\times \localset_n \times U_n \times W_n$ as well as $\predicted{x}{k} \in \localset_{k}$ for some $k \in \N$.
    Further, suppose $E_k'$ is $L'_k$-Lipschitz with respect to $\norm{\cdot}_{k,\circ}$ in the sense that
    \[
        \norm{E_k'(u)-E_k'(v)}_{k,*}
        \le
        L'_k\norm{u-v}_{k,\circ}
        \quad\text{for all } u,v\in X_k,
    \]
    and that for some constants $\gamma_1, \gamma_2, L_1, L_2 > 0$ and that for some $\delta > 0$, every comparison sequence $\bar x^{1:k} \in \PpredictConstr_{1:k} \isect \prod_{n=1}^k B(\predicted{x}{n},\delta)$ and all $x \in B(\bar x^k, \delta)$, we have
    \begin{align}
        \label{eq:true-3pt-ii-hat-ii}
        \dualprod{E_k'(\predicted{x}{k})}{x - \bar x^k}_{X_k^*,X_k}
         & \ge
        E_k(x) - E_k(\bar x^k)
        +
        \gamma_1\norm{x - \bar x^k}_{k,\circ}^2
        -
        L_1
        \norm{x - \predicted{x}{k}}_{k,\circ}^2,
        \\
        \shortintertext{and for any $\this\realoptu = (\this{\realoptx}, \this{\realopty}) \in H_k^{-1}(0)$ and all $x \in B(\realoptx^k, \delta)$ we have}
        \label{eq:true-3pt-i-hat-ii}
        \dualprod{E_k'(\predicted{x}{k}) - E_k'(\realoptx^k)}{x - \realoptx^k}_{X_k^*,X_k}
         & \ge
        \gamma_2\norm{x - \realoptx^k}_{k,\circ}^2
        -
        L_2
        \norm{x - \predicted{x}{k}}_{k,\circ}^2.
    \end{align}
    Let $\combinederror_{k-1}$ and $\trackingressum[1]$ be as in \cref{lemma:tracking:smoothness-pre-result}.
    Then the following is true
    \begin{enumerate}[label=(\roman*)]
        \item \label{item:tracking:smoothness-verification:global}
              For any $\tilde\gamma>0$,
              \cref{ass:pd:smoothness} \ref{item:pd:main-forwardstep-local0} holds with
              \[
                  \bar \gamma_E = - L_k' - \frac{\tilde\gamma}{2}(L_k' + 1),
                  \quad
                  \bar \lambda_E = \frac{L_k'}{2\tilde\gamma} + \frac{5}{4\tilde\gamma}\trackingressum[1]^2,
                  \quad\text{and}\quad
                  \bar e_k = \frac{1}{2\tilde\gamma} \combinederror_{k-1},
              \]
              That is, for any $\this\realoptu = (\this{\realoptx}, \this{\realopty}) \in H_k^{-1}(0)$ and for all $x^k \in X_k$, we have
              \begin{equation}
                  \label{eq:err-iii-final}
                  \dualprod{\estdiff{E}{k}(\predicted{x}{k}) - E_{k}'(\realoptx^k)}{x^k - \realoptx^k}_{X_k^*,X_k}
                  \ge
                  \bar \gamma_E \norm{x^k - \realoptx^k}_{k,\circ}^2
                  -
                  \bar \lambda_E \norm{x^k - \predicted{x}{k}}_{k,\circ}^2
                  -
                  \bar e_k.
              \end{equation}

        \item \label{item:tracking:smoothness-verification:local}
              For $0 < \tilde\gamma < 2\min\{\gamma_1,\gamma_2\}$,
              \cref{ass:pd:smoothness} \ref{item:pd:main-forwardstep-local} holds with
              \[
                  \hat \gamma_E \defeq \gamma_2 - \frac{\tilde\gamma}{2},
                  \quad
                  \gamma_E \defeq \min \{\hat \gamma_E, 2\gamma_1 - \tilde\gamma \},
                  \quad
                  \lambda_E \defeq 2L_1 + \frac{5\trackingressum[1]^2}{2\tilde\gamma},\quad
                  \hat \lambda_E \defeq L_2 + \frac{5\trackingressum[1]^2}{4\tilde\gamma},
                  \quad
                  e_k \defeq \hat e_k \defeq \ \frac{1}{2\tilde\gamma} \combinederror_{k-1}.
              \]
              That is, for every $\bar x^{1:k} \in \PpredictConstr_{1:k} \isect \prod_{n=1}^N B(\predicted{x}{n},\delta)$, and for all $x^k \in B(\bar x^k,\delta)$, we have
              \begin{gather}
                  \label{eq:err-ii-final}
                  \dualprod{\estdiff{E}{k}(\predicted{x}{k})}{x^k - \bar x^k}_{X_k^*,X_k}
                  \ge
                  E_k(x^k) - E_k(\bar x^k)
                  +
                  \frac{\gamma_E}{2}
                  \norm{x^k - \bar x^k}_{k,\circ}^2
                  -
                  \frac{\lambda_E}{2}
                  \norm{x^k - \predicted{x}{k}}_{k,\circ}^2
                  -e_k.
                  \intertext{and, for any $\this\realoptu = (\this{\realoptx}, \this{\realopty}) \in H_k^{-1}(0) \isect B(\this\primalpredict, \delta) \times Y_k$ and $x^k \in B(\realoptx^k,\delta)$,}
                  \label{eq:err-i-final}
                  \dualprod{\estdiff{E}{k}(\predicted{x}{k}) - E'_k(\realoptx^k)}{x^k - \realoptx^k}_{X_k^*,X_k}
                  \ge
                  \hat \gamma_E \norm{x^k - \realoptx^k}_{k,\circ}^2
                  -
                  \hat \lambda_E \norm{x^k - \predicted{x}{k}}_{k,\circ}^2
                  -\hat e_k.
              \end{gather}
    \end{enumerate}
\end{theorem}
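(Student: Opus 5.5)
The plan is to apply \cref{lemma:tracking:smoothness-pre-result} with $k-1$ in place of $k$; it holds under the stated hypotheses on $((x^n,\predicted{x}{n},u^n,w^n))_{n=0}^{k-1}$ and $\predicted{x}{k}$. For any $x,\bar x\in X_k$ and $\tilde\gamma>0$ this gives
\[
    \dualprod{\estdiff{E}{k}(\predicted{x}{k}) - E_k'(\predicted{x}{k})}{x-\bar x}
    \ge -\tfrac{\tilde\gamma}{2}\norm{x-\bar x}_{k,\circ}^2 - \tfrac{5}{4\tilde\gamma}\trackingressum[1]^2\norm{\predicted{x}{k}-x^k}_{k,\circ}^2 - \tfrac{1}{2\tilde\gamma}\combinederror_{k-1}.
\]
We use it only with $x=x^k$, as the footnote to \cref{ass:pd:smoothness} allows. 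Each claimed inequality then follows by adding this to a suitable inequality for the exact differential $E_k'$ and collecting coefficients.

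For \ref{item:tracking:smoothness-verification:global}, set $\bar x=\realoptx^k$ and split $\estdiff{E}{k}(\predicted{x}{k})-E_k'(\realoptx^k)$ into the estimation error $\estdiff{E}{k}(\predicted{x}{k})-E_k'(\predicted{x}{k})$ plus $E_k'(\predicted{x}{k})-E_k'(\realoptx^k)$. Bound the second pairing from below using the Lipschitz property and \cite[Lemma 2.1 (iii)]{tuomov2024tracking}:
\[
    \dualprod{E_k'(\predicted{x}{k})-E_k'(\realoptx^k)}{x^k-\realoptx^k}\ge -L_k'\norm{\predicted{x}{k}-\realoptx^k}_{k,\circ}\norm{x^k-\realoptx^k}_{k,\circ}.
\]
Then apply the triangle inequality $\norm{\predicted{x}{k}-\realoptx^k}\le\norm{x^k-\realoptx^k}+\norm{x^k-\predicted{x}{k}}$, followed by Young's inequality with parameter $\tilde\gamma$ on the cross term. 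This yields $-L_k'(1+\tilde\gamma/2)\norm{x^k-\realoptx^k}^2-\tfrac{L_k'}{2\tilde\gamma}\norm{x^k-\predicted{x}{k}}^2$. Adding the lemma's $-\tfrac{\tilde\gamma}{2}\norm{x^k-\realoptx^k}^2$ gives exactly $\bar\gamma_E=-L_k'-\tfrac{\tilde\gamma}{2}(L_k'+1)$. The $\predicted{x}{k}$-terms sum to $\bar\lambda_E$, and the error term is $\bar e_k$. This step is pure bookkeeping; the only care needed is choosing the Young split so the constants match the stated ones.

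For \ref{item:tracking:smoothness-verification:local}, add the lemma with $\bar x=\bar x^k$ to \eqref{eq:true-3pt-ii-hat-ii} at $x=x^k\in B(\bar x^k,\delta)$. This gives the coefficient $\gamma_1-\tilde\gamma/2=\tfrac12(2\gamma_1-\tilde\gamma)\ge\gamma_E/2$ on $\norm{x^k-\bar x^k}^2$, and $L_1+\tfrac{5}{4\tilde\gamma}\trackingressum[1]^2=\lambda_E/2$ on $\norm{x^k-\predicted{x}{k}}^2$, proving \eqref{eq:err-ii-final}. For \eqref{eq:err-i-final}, add the lemma with $\bar x=\realoptx^k$ to \eqref{eq:true-3pt-i-hat-ii}. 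This gives $\gamma_2-\tilde\gamma/2=\hat\gamma_E$ and $L_2+\tfrac{5}{4\tilde\gamma}\trackingressum[1]^2=\hat\lambda_E$. The restriction $\tilde\gamma<2\min\{\gamma_1,\gamma_2\}$ ensures $\hat\gamma_E\ge\gamma_E>0$, as \cref{ass:pd:smoothness} requires.

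The main obstacle is the normalisation mismatch. \cref{ass:pd:smoothness} is stated in Hilbert norms with Riesz gradients, whereas here we work with $\norm{\cdot}_{k,\circ}$ and dual pairings. We therefore identify the two under the convention that $\norm{\cdot}_{k,\circ}$ is the Hilbert norm of $X_k$. We must also check that the factor-$\tfrac12$ conventions in the first inequality of \ref{item:pd:main-forwardstep-local} correspond to $\gamma_E/2$ and $\lambda_E/2$, which accounts for the doubling in $\lambda_E=2L_1+\dots$ and $\gamma_E\le 2\gamma_1-\tilde\gamma$. The remaining membership conditions are assumed directly in the hypotheses: $\realoptx^k\in B(\predicted{x}{k},\delta)$ and $x^k$ in the respective balls.
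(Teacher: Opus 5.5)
Your proposal is correct and follows the paper's proof essentially step by step. Like the paper, you apply \cref{lemma:tracking:smoothness-pre-result} at index $k-1$ and only at the realised iterate $x^k$. For (i) you combine it with the Lipschitz/triangle/Young bound on $E_k'(\predicted{x}{k})-E_k'(\realoptx^k)$, and for (ii) with \eqref{eq:true-3pt-ii-hat-ii} and \eqref{eq:true-3pt-i-hat-ii}, reading off the same constants (you additionally make explicit that $\tilde\gamma<2\min\{\gamma_1,\gamma_2\}$ gives $\hat\gamma_E\ge\gamma_E>0$, which the paper leaves implicit).
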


\begin{proof}
    \cref{item:tracking:smoothness-verification:global}:
    Since \cref{ass:tracking:main} holds and $((x^n, \predicted{x}{n}, u^n, w^n))_{n=0}^{k-1} \in \prod_{n=0}^{k-1} \localset_n\times \localset_n \times U_n \times W_n$ as well as $\predicted{x}{k} \in \localset_{k}$,  \cref{lemma:tracking:smoothness-pre-result} applied with index $k-1$ together with \cite[Lemma 2.1 (iii)]{tuomov2024tracking} yields that for any $\tilde\gamma>0$, any $x, z \in X_k$,
    \begin{equation}
        \begin{split}
            \label{eq:err-bounds-i}
            \dualprod{\estdiff{E}{k}(\predicted{x}{k}) - E_{k}'(\predicted{x}{k})}{x - z}_{X_k^*,X_k}
             &
            =
            -
            \adaptdualprod{
                -\tilde\gamma^{-1/2}
                \bigl(
                \estdiff{E}{k}(\predicted{x}{k}) - E'_k(\predicted{x}{k})
                \bigr)
            }{
                \tilde\gamma^{1/2}(x - z)
            }_{X_k^*,X_k}
            \\
             &
            \ge
            -
            \frac{1}{2\tilde\gamma}
            \norm{\estdiff{E}{k}(\predicted{x}{k}) - E'_k(\predicted{x}{k})}_{k,*}^2
            -
            \frac{\tilde\gamma}{2}
            \norm{x - z}_{k,\circ}^2
            \\
             &
            \ge
            - \frac{\tilde\gamma}{2} \norm{x - z}^2_{k, \circ}
            - \frac{5}{4\tilde\gamma}\trackingressum[1]^2 \norm{\predicted{x}{k} - x^k}_{k,\circ}^2
            - \frac{1}{2\tilde\gamma} \combinederror_{k-1}.
        \end{split}
    \end{equation}
    Since $E'_k$ is $L_k'$-Lipschitz between the semi-norm $\norm{\cdot}_{k,\circ}$ and support function $\norm{\cdot}_{k,*}$, and the two satisfy a Cauchy--Schwarz inequality \cite[§2]{tuomov2024tracking}, also applying the triangle and Young's inequalities, we obtain
    \begin{equation}
        \begin{split}
            \dualprod{E'_k(\predicted{x}{k}) - E'_k(\realoptx^k)}{x - \realoptx^k}_{X_k^*,X_k}
             &
            =
            -
            \dualprod{E'_k(\realoptx^k) - E'_k(\predicted{x}{k})}{x - \realoptx^k}_{X_k^*,X_k}
            \\
             &
            \ge
            -
            \norm{E'_k(\predicted{x}{k}) - E'_k(\realoptx^k)}_{k,*}
            \norm{x - \realoptx^k}_{k,\circ}
            \\
             &
            \ge
            -
            L_k'
            \norm{\predicted{x}{k} - \realoptx^k}_{k,\circ}
            \norm{x - \realoptx^k}_{k,\circ}
            \\
             &
            \ge
            -
            L_k'
            \left(
            \norm{x - \predicted{x}{k}}_{k,\circ}
            +
            \norm{x - \realoptx^k}_{k,\circ}
            \right)
            \norm{x - \realoptx^k}_{k,\circ}
            \\
             &
            =
            -
            \frac{L_k'}{2\tilde\gamma}\norm{x - \predicted{x}{k}}_{k,\circ}^2
            -
            L_k'\left(1+\frac{\tilde\gamma}{2}\right)\norm{x - \realoptx^k}_{k,\circ}^2.
        \end{split}
    \end{equation}
    Hence,
    \[
        \begin{aligned}
            \dualprod{\estdiff{E}{k}(\predicted{x}{k}) - E'_k(\realoptx^k)}{x - \realoptx^k}_{X_k^*,X_k}
             &
            =
            \dualprod{\estdiff{E}{k}(\predicted{x}{k}) - E'_k(\predicted{x}{k})}{x - \realoptx^k}_{X_k^*,X_k}
            +
            \dualprod{E'_k(\predicted{x}{k}) - E'_k(\realoptx^k)}{x - \realoptx^k}_{X_k^*,X_k}
            \\
             &
            \ge
            \dualprod{\estdiff{E}{k}(\predicted{x}{k}) - E'_k(\predicted{x}{k})}{x - \realoptx^k}_{X_k^*,X_k}
            \\
            \MoveEqLeft[-1]
            -
            \frac{L_k'}{2\tilde\gamma}\norm{x - \predicted{x}{k}}_{k,\circ}^2
            -
            L_k'\left(1+\frac{\tilde\gamma}{2}\right)\norm{x - \realoptx^k}_{k,\circ}^2.
        \end{aligned}
    \]
    Combining this with \eqref{eq:err-bounds-i} applied to $x = x^k$ and $z=\realoptx^k$ gives \eqref{eq:err-iii-final} for the stated coefficients.

    \cref{item:tracking:smoothness-verification:local}:
    To prove \cref{eq:err-ii-final}, let $\bar x^{1:k} \in \PpredictConstr_{1:k} \isect \prod_{n=1}^N B(\predicted{x}{n},\delta)$, and $x^k \in B(\bar x^k,\delta)$.
    Write $\dualprod{\estdiff{E}{k}(\predicted{x}{k})}{x^k - \bar x^k}_{X_k^*,X_k} = \dualprod{E'_k(\predicted{x}{k})}{x^k - \bar x^k}_{X_k^*,X_k} + \dualprod{\estdiff{E}{k}(\predicted{x}{k}) - E'_k(\predicted{x}{k})}{x^k - \bar x^k}_{X_k^*,X_k}$, apply \eqref{eq:err-bounds-i} with $x = x^k$ and $z = \bar x^k$ and combine it with \eqref{eq:true-3pt-ii-hat-ii}. This gives \eqref{eq:err-ii-final}. Similarly, taking $x = x^k$ and $z = \realoptx^k$ in \eqref{eq:err-bounds-i} and combining it with \eqref{eq:true-3pt-i-hat-ii} gives \eqref{eq:err-i-final}, finishing the proof.
\end{proof}

\subsection{Quadratic energies}
\label{sec:online-tracking:smoothness-quadratic}

We prove \cref{eq:true-3pt-ii-hat-ii,eq:true-3pt-i-hat-ii} for $E_k$ of the quadratic form \eqref{eq:intro:functions}, relevant in applications with Gaussian noise.
The arguments rely only on local bounds on the Hessian, combined with with global boundedness of $S_{k}$ and a global Lipschitz bound on $S_k'$.

\begin{assumption}[Local strong convexity and smoothness]
    \label{ass:local-around-hat-final}
    Fix $k \in \N$. Let $X_k,Y_k$ be real Hilbert spaces, let $\norm{\cdot}_{k, \circ}$ be a seminorm on $X_k$, and let  $\hat x \in X_k$ and $S_{k,u}:X_k\to Y_k$ be $C^2$ on $B(\hat x,\delta)$ for some $\delta > 0$.
    Let $B_k:Y_k\to Y_k$ be bounded, self-adjoint, and positive semidefinite, and define
    \[
        \norm{y}_{Y_k,\circ}^2 \defeq \iprod{B_k y}{y}_{Y_k}
        \quad\text{for all}\quad y\in Y_k.
    \]
    Define
    \[
        E_k(x) \defeq \frac12\norm{S_{k,u}(x)}_{Y_k,\circ}^2.
    \]
    Assume that $S_{k,u}$ is Fréchet differentiable on $X_k$ and that there exist constants $\bar L_{S,k}\ge0$, $\bar M_k\ge0$, and $\bar r_k\ge0$ such that
    \[
        \norm{S_{k,u}(x)}_{Y_k,\circ}
        \le
        \bar r_k
        \quad\text{for all } x\in X_k,
    \]
    \[
        \norm{S_{k,u}'(x)h}_{Y_k,\circ}
        \le
        \bar L_{S,k}\norm{h}_{k,\circ}
        \quad\text{for all } x\in X_k,\ h\in X_k,
    \]
    and
    \[
        \norm{(S_{k,u}'(u)-S_{k,u}'(v))h}_{Y_k,\circ}
        \le
        \bar M_k\norm{u-v}_{k,\circ}\norm{h}_{k,\circ}
        \quad\text{for all } u,v\in X_k,\ h\in X_k.
    \]
    Define
    \[
        L'_k\defeq \bar L_{S,k}^2+\bar M_k\bar r_k.
    \]    Assume there exist constants $\mu_k>0$, $L_{S,k}\ge0$, $M_k\ge0$, $r_k\ge0$ such that for all
    $x\in B(\hat x,\delta)$:
    \begin{subequations}
        \begin{align}
            \label{eq:GNf-der-lb}
            \norm{S_{k,u}'(x)h}_{Y_k,\circ}
             & \ge \mu_k\norm{h}_{k,\circ}
            \quad\text{for all}\quad h\in X_k,
            \\
            \label{eq:GNf-der-bdd}
            \norm{S_{k,u}'(x)h}_{Y_k,\circ}
             & \le L_{S,k}\norm{h}_{k,\circ}
            \quad\text{for all}\quad h\in X_k,
            \\
            \label{eq:GNf-res-bdd}
            \norm{S_{k,u}(x)}_{Y_k,\circ}
             & \le r_k,
            \\
            \label{eq:GNf-sec-bdd}
            \norm{S_{k,u}''(x)[h_1,h_2]}_{Y_k,\circ}
             & \le M_k\norm{h_1}_{k,\circ}\norm{h_2}_{k,\circ}
            \quad\text{for all}\quad h_1,h_2\in X_k.
        \end{align}
    \end{subequations}
    Define
    \[
        m_k\defeq \mu_k^2-M_k r_k,\quad L''_k\defeq L_{S,k}^2+M_k r_k,
    \]
    and suppose that $m_k > 0$.

\end{assumption}

\begin{corollary}[Global Lipschitz continuity of $E_k'$]
    \label{cor:Ek-global-lipschitz}
    Suppose that \cref{ass:local-around-hat-final} holds. Then $E_k$ is Fréchet differentiable on $X_k$ and
    \[
        \norm{E_k'(u)-E_k'(v)}_{k,*}
        \le
        L'_k\norm{u-v}_{k,\circ}
        \quad\text{for all } u,v\in X_k.
    \]
\end{corollary}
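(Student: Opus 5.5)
The plan is to compute the derivative explicitly and then estimate the difference of derivatives by splitting it into two terms. By the chain rule, since $S_{k,u}$ is Fréchet differentiable on $X_k$ and $y\mapsto\frac12\iprod{B_k y}{y}$ is smooth with derivative $\iprod{B_k y}{\freevar}$, $E_k$ is Fréchet differentiable with
\[
    \dualprod{E_k'(x)}{h} = \iprod{B_k S_{k,u}(x)}{S_{k,u}'(x)h}_{Y_k}
    \quad\text{for all}\quad x,h\in X_k.
\]
Because $B_k$ is positive semidefinite and self-adjoint, the form $(y,z)\mapsto\iprod{B_k y}{z}$ is a semi-inner product. It therefore satisfies the Cauchy--Schwarz inequality $\abs{\iprod{B_k y}{z}}\le \norm{y}_{Y_k,\circ}\norm{z}_{Y_k,\circ}$. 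This is the only structural tool needed.

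Fix $u,v,h\in X_k$ and split
\[
    \dualprod{E_k'(u)-E_k'(v)}{h}
    =
    \iprod{B_k S_{k,u}(u)}{(S_{k,u}'(u)-S_{k,u}'(v))h}
    +
    \iprod{B_k (S_{k,u}(u)-S_{k,u}(v))}{S_{k,u}'(v)h}.
\]
The first term is bounded by Cauchy--Schwarz together with the global bounds $\bar r_k$ and $\bar M_k$, giving $\bar r_k\bar M_k\norm{u-v}_{k,\circ}\norm{h}_{k,\circ}$. For the second term, Cauchy--Schwarz and the bound $\bar L_{S,k}$ on $S_{k,u}'(v)h$ leave the task of bounding $\norm{S_{k,u}(u)-S_{k,u}(v)}_{Y_k,\circ}$.

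That last bound is the main (if mild) obstacle, since we only have a seminorm. I would use the mean value theorem in integral form along the segment $t\mapsto v+t(u-v)$. Setting $y\defeq S_{k,u}(u)-S_{k,u}(v)$, the scalar function $t\mapsto\iprod{B_k y}{S_{k,u}(v+t(u-v))}$ is differentiable. Its derivative is $\iprod{B_k y}{S_{k,u}'(v+t(u-v))(u-v)}$, which is bounded by $\norm{y}_{Y_k,\circ}\bar L_{S,k}\norm{u-v}_{k,\circ}$. The scalar mean value theorem then gives $\norm{y}_{Y_k,\circ}^2\le \norm{y}_{Y_k,\circ}\bar L_{S,k}\norm{u-v}_{k,\circ}$, hence $\norm{y}_{Y_k,\circ}\le\bar L_{S,k}\norm{u-v}_{k,\circ}$. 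The case $\norm{y}_{Y_k,\circ}=0$ is trivial. This avoids Bochner integration and any continuity assumptions on $S_{k,u}'$ beyond differentiability.

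Combining both terms gives $\dualprod{E_k'(u)-E_k'(v)}{h}\le(\bar L_{S,k}^2+\bar M_k\bar r_k)\norm{u-v}_{k,\circ}\norm{h}_{k,\circ}=L_k'\norm{u-v}_{k,\circ}\norm{h}_{k,\circ}$. Taking the supremum over $\norm{h}_{k,\circ}\le1$ in the definition of $\norm{\freevar}_{k,*}$ from \cref{ass:tracking:main} yields the claim. If $\norm{u-v}_{k,\circ}=0$, the same estimate shows the pairing vanishes for every $h$, consistent with the bound.
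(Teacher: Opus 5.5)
Your proof is correct and follows the paper's argument: the chain rule, a product-rule splitting of $\dualprod{E_k'(u)-E_k'(v)}{h}$ (the paper uses the mirror-image split, with $S_{k,u}(u)-S_{k,u}(v)$ paired against $S_{k,u}'(u)h$ and $S_{k,u}(v)$ against the derivative difference), Cauchy--Schwarz for the semi-inner product, and a supremum over $\norm{h}_{k,\circ}\le 1$. Your scalar mean-value argument for $\norm{S_{k,u}(u)-S_{k,u}(v)}_{Y_k,\circ}\le \bar L_{S,k}\norm{u-v}_{k,\circ}$ supplies a justification that the paper simply asserts ``by the assumption''.
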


\begin{proof}
    Since $y\mapsto \frac12\iprod{B_k y}{y}_{Y_k}$ is Fréchet differentiable and $S_{k,u}$ is Fréchet differentiable on $X_k$, the chain rule gives
    \[
        E_k'(x)h
        =
        \iprod{B_kS_{k,u}(x)}{S_{k,u}'(x)h}_{Y_k}.
    \]
    Let $u,v,h\in X_k$. By \cref{ass:local-around-hat-final},
    \[
        \norm{S_{k,u}(u)-S_{k,u}(v)}_{Y_k,\circ}
        \le
        \bar L_{S,k}\norm{u-v}_{k,\circ}.
    \]
    Therefore
    \[
    \begin{aligned}
        \abs{(E_k'(u)-E_k'(v))h}
        &
        \le
        \norm{S_{k,u}(u)-S_{k,u}(v)}_{Y_k,\circ}
        \norm{S_{k,u}'(u)h}_{Y_k,\circ}
        +
        \norm{S_{k,u}(v)}_{Y_k,\circ}
        \norm{(S_{k,u}'(u)-S_{k,u}'(v))h}_{Y_k,\circ}
        \\
        &
        \le
        \left(\bar L_{S,k}^2+\bar M_k\bar r_k\right)
        \norm{u-v}_{k,\circ}\norm{h}_{k,\circ}
        =
        L'_k\norm{u-v}_{k,\circ}\norm{h}_{k,\circ}.
    \end{aligned}
    \]
    Taking the supremum over $\norm{h}_{k,\circ}\le 1$ gives the claim.
\end{proof}

\begin{remark}[Twice continous differentiability of the EIT solution operator]
    The assumption that $S_{k,u}$ be $C^2$ from the Hilbert space $X_k$ to $Y_k$ is somewhat strict.
    For $S_{k,u}=(I_i^{j,k})_{i=1,\ldots,\Nelec;\,j=1,\ldots,\Nmeas}$ the solution operator of the EIT problem \eqref{eq:eit:cem}, with $Y_k=\R^{\Nmeas\Nelec}$, in \cite{tuomov2024online-eit} we prove that $S_{k,u}$ is $C^2$ on $L^\infty$.
    Hence, if $X_k$ is chosen so that $X_k \hookrightarrow L^\infty$ continuously, for instance $X_k=H^s(\Omega)$ with $s>d/2$, then the restriction of $S_{k,u}$ to $X_k$ is Fréchet differentiable as a map from $X_k$ to $Y_k$. Moreover, the $L^\infty$-based derivative estimates imply corresponding $X_k$-based estimates through the embedding $X_k \hookrightarrow L^\infty$. In particular, a uniform bound for the second derivative gives the Lipschitz estimate for $S_{k,u}'$ required above.
\end{remark}

The next lemma shows that local $\norm{\freevar}_\circ$-strong convexity is induced by a uniform lower bound on the derivative $S_{k,u}'$, provided the residual $\norm{S_{k,u}(x)}_{Y_k,\circ}$ is sufficiently small, i.e. if $m_k > 0$. This smallness condition corresponds to working near a critical point with small misfit.\begin{lemma}[Local strong convexity and smoothness for quadratic $E_k$]
    \label{lem:hess-final}
    Suppose that \cref{ass:local-around-hat-final} holds.
    Then for all  $x\in B(\hat x,\delta)$ and $h\in X_k$,
    \begin{equation}
        \label{eq:Ek-local-strong-convexity}
        m_k\norm{h}_{k,\circ}^2
        \le
        E''_k(x)[h,h]
        \le
        L''_k\norm{h}_{k,\circ}^2.
    \end{equation}
\end{lemma}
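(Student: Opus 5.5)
We compute the second derivative of $E_k(x)=\frac12\iprod{B_kS_{k,u}(x)}{S_{k,u}(x)}_{Y_k}$ directly. On $B(\hat x,\delta)$ the map $S_{k,u}$ is $C^2$, so the chain and product rules apply to the first-derivative formula from the proof of \cref{cor:Ek-global-lipschitz},
\[
    E_k'(x)h=\iprod{B_kS_{k,u}(x)}{S_{k,u}'(x)h}_{Y_k}.
\]
Differentiating once more in the direction $h$ and using the self-adjointness of $B_k$ gives
\[
    E_k''(x)[h,h]
    =
    \iprod{B_kS_{k,u}'(x)h}{S_{k,u}'(x)h}_{Y_k}
    +\iprod{B_kS_{k,u}(x)}{S_{k,u}''(x)[h,h]}_{Y_k}
    =
    \norm{S_{k,u}'(x)h}_{Y_k,\circ}^2
    +\iprod{B_kS_{k,u}(x)}{S_{k,u}''(x)[h,h]}_{Y_k}.
\]

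The two terms are treated separately. The first is the Gauss–Newton term, which \eqref{eq:GNf-der-lb} and \eqref{eq:GNf-der-bdd} sandwich between $\mu_k^2\norm{h}_{k,\circ}^2$ and $L_{S,k}^2\norm{h}_{k,\circ}^2$. For the second, residual term we need a Cauchy–Schwarz inequality for the semi-inner product induced by $B_k$. Since $B_k$ is bounded, self-adjoint and positive semidefinite, $(y,z)\mapsto\iprod{B_ky}{z}_{Y_k}$ is a symmetric positive semidefinite bilinear form. The usual discriminant argument applied to $t\mapsto\iprod{B_k(y+tz)}{y+tz}\ge0$ then gives
\[
    \abs{\iprod{B_ky}{z}_{Y_k}}\le\norm{y}_{Y_k,\circ}\norm{z}_{Y_k,\circ}.
\]
Combining this with \eqref{eq:GNf-res-bdd} and \eqref{eq:GNf-sec-bdd} bounds the absolute value of the residual term by $r_kM_k\norm{h}_{k,\circ}^2$.

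Adding the two estimates yields the lower bound $(\mu_k^2-M_kr_k)\norm{h}_{k,\circ}^2=m_k\norm{h}_{k,\circ}^2$ and the upper bound $(L_{S,k}^2+M_kr_k)\norm{h}_{k,\circ}^2=L''_k\norm{h}_{k,\circ}^2$, which is \eqref{eq:Ek-local-strong-convexity}.

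The only step needing real care is the justification of the second-derivative formula, which amounts to differentiating a composition of a bounded bilinear form with a $C^2$ map. Here it matters that $B(\hat x,\delta)$ is open, so the $C^2$ hypothesis holds in a neighbourhood of every point at which we differentiate. Apart from that, the argument is a direct estimate.
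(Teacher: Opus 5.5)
Your proposal is correct and matches the paper's proof essentially step for step. It uses the same split of $E_k''(x)[h,h]$ into the Gauss--Newton term $\norm{S_{k,u}'(x)h}_{Y_k,\circ}^2$, bounded by \eqref{eq:GNf-der-lb}--\eqref{eq:GNf-der-bdd}, and the residual term $\iprod{B_kS_{k,u}(x)}{S_{k,u}''(x)[h,h]}_{Y_k}$, bounded via Cauchy--Schwarz for the $B_k$-semi-inner product together with \eqref{eq:GNf-res-bdd}--\eqref{eq:GNf-sec-bdd}; your discriminant argument for that Cauchy--Schwarz step is simply a detail the paper takes as known.
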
\begin{proof}
    Let $x\in B(\hat x,\delta)$. By the definition of $E_k$ and the $C^2$-regularity of $S_{k,u}$ on $B(\hat x,\delta)$, $E_k$ is $C^2$ on $B(\hat x,\delta)$ and
    \[
        E'_k(x)h
        =
        \iprod{B_kS_{k,u}(x)}{S_{k,u}'(x)h}_{Y_k},
        \quad
        \text{and}
        \quad
        E''_k(x)[h,h]
        =
        \iprod{B_kS_{k,u}'(x)h}{S_{k,u}'(x)h}_{Y_k}
        +
        R_k(x)[h,h],
    \]
    where
    $
        R_k(x)[h,h]
        \defeq
        \iprod{B_kS_{k,u}(x)}{S_{k,u}''(x)[h,h]}_{Y_k}.
    $
    By the Cauchy--Schwarz inequality for the positive semidefinite bilinear form $(u,v) \mapsto \iprod{B_k u}{v}_{Y_k}$,
    together with \eqref{eq:GNf-res-bdd} and \eqref{eq:GNf-sec-bdd}, we have
    \[
        \abs{R_k(x)[h,h]}
        \le
        \norm{S_{k,u}(x)}_{Y_k,\circ}
        \norm{S_{k,u}''(x)[h,h]}_{Y_k,\circ}
        \le
        M_k r_k \norm{h}_{k,\circ}^2.
    \]
    Also, by the definition of $\norm{\cdot}_{Y_k,\circ}$ and
    \eqref{eq:GNf-der-lb}--\eqref{eq:GNf-der-bdd},
    \[
        \mu_k^2\norm{h}_{k,\circ}^2
        \le
        \norm{S_{k,u}'(x)h}_{Y_k,\circ}^2
        \le
        L_{S,k}^2\norm{h}_{k,\circ}^2.
    \]
    Summing yields the claim.
\end{proof}

The following lemma provides the standard upper and lower quadratic models for $E_k$. These inequalities form the basic building blocks for the three-point estimates.\begin{lemma}[Descent and support inequalities]
    \label{lem:onestep-final}
    Suppose that \cref{ass:local-around-hat-final} holds for some $\hat x \in X_k$. Then for all $x,z \in B(\hat x,\delta) \subset X_k$,
    \begin{align}
        \label{eq:descent-final}
        E_k(x)
         & \le
        E_k(z)
        +
        \dualprod{E_k'(z)}{x - z}_{X_k^*,X_k}
        +
        \frac{L''_k}{2}\norm{x - z}_{k,\circ}^2,
        \\
        \label{eq:support-final}
        E_k(x)
         & \ge
        E_k(z)
        +
        \dualprod{E_k'(z)}{x - z}_{X_k^*,X_k}
        +
        \frac{m_k}{2}\norm{x - z}_{k,\circ}^2.
    \end{align}
\end{lemma}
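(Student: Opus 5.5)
We prove \eqref{eq:descent-final} and \eqref{eq:support-final} by a second-order Taylor expansion along the segment from $z$ to $x$, combined with the Hessian bounds of \cref{lem:hess-final}. The key observation is that $B(\hat x,\delta)$ is convex. Hence for $x,z\in B(\hat x,\delta)$ the whole segment $z_t \defeq z + t(x-z)$, $t\in[0,1]$, stays in $B(\hat x,\delta)$. On this ball $E_k$ is $C^2$, as established in the proof of \cref{lem:hess-final}.

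Set $h \defeq x-z$ and define the scalar function $\phi(t) \defeq E_k(z_t)$. By the chain rule and the $C^2$-regularity on the ball, $\phi$ is $C^2$ on $[0,1]$ with
\[
    \phi'(t) = \dualprod{E_k'(z_t)}{h}_{X_k^*,X_k}
    \quad\text{and}\quad
    \phi''(t) = E_k''(z_t)[h,h].
\]
Taylor's formula with integral remainder gives
\[
    E_k(x) = E_k(z) + \dualprod{E_k'(z)}{x-z}_{X_k^*,X_k} + \int_0^1 (1-t)\,E_k''(z_t)[h,h]\,dt .
\]

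Since each $z_t\in B(\hat x,\delta)$, \eqref{eq:Ek-local-strong-convexity} yields
\[
    m_k\norm{h}_{k,\circ}^2 \le E_k''(z_t)[h,h] \le L''_k\norm{h}_{k,\circ}^2
    \quad\text{for every } t\in[0,1].
\]
Integrating against the weight $1-t$, which has integral $\int_0^1(1-t)\,dt = 1/2$, bounds the remainder below by $\frac{m_k}{2}\norm{x-z}_{k,\circ}^2$ and above by $\frac{L''_k}{2}\norm{x-z}_{k,\circ}^2$. Substituting these bounds into the Taylor formula gives \eqref{eq:support-final} and \eqref{eq:descent-final}, respectively.

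The only point requiring care is justifying the $C^2$-regularity of $\phi$ and the Taylor formula in the possibly infinite-dimensional setting. This is handled by working with the scalar function $\phi$ and using that $E_k = \frac12\norm{S_{k,u}(\freevar)}_{Y_k,\circ}^2$ is the composition of a bounded quadratic form with a map that is $C^2$ on the open ball. No further obstacle is expected. The seminorm nature of $\norm{\freevar}_{k,\circ}$ causes no issue, since it enters only through the pointwise Hessian bounds.
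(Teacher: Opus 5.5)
Your proof is correct and takes essentially the same route as the paper: both restrict $E_k$ to the segment $z+t(x-z)$, which stays in the convex ball, and integrate the Hessian bounds of \cref{lem:hess-final}. The only difference is that the paper writes the remainder as the double integral $\int_0^1\int_0^s \phi''(t)\,dt\,ds$, whereas you use $\int_0^1(1-t)\phi''(t)\,dt$; these are equal by Fubini.
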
\begin{proof}
    Let $x,z \in B(\hat x,\delta)$ and set $\phi(t) = E_k(z + t(x - z))$.
    Since $B(\hat x,\delta)$ is convex, $z+t(x-z)\in B(\hat x,\delta)$ for all $t\in[0,1]$ and thus, since \cref{ass:local-around-hat-final} holds, by \cref{lem:hess-final}, $\phi(t)$ is $C^2(0,1)$, $\phi'(t) = \dualprod{E_k'(z + t(x-z))}{x-z}_{X_k^*,X_k}$, and $\phi''(t) = E_k''(z+t(x-z))[x-z,x-z]$. Moreover, \eqref{eq:Ek-local-strong-convexity} holds for every $t$ and gives $m_k\norm{x-z}_{k,\circ}^2\le \phi''(t)\le L''_k\norm{x-z}_{k,\circ}^2$. Then
    \[
        \begin{aligned}
            \frac{m_k}{2}\norm{x - z}_{k,\circ}^2
            =
            \int_0^1\int_0^s m_k\norm{x - z}_{k,\circ}^2 dt ds
             &
            \le
            \int_0^1\int_0^s \phi''(t) dt ds
            \\
             &
            \le
            \int_0^1\int_0^s L''_k\norm{x - z}_{k,\circ}^2 dt ds
            =
            \frac{L''_k}{2}\norm{x-z}_{k,\circ}^2
        \end{aligned}
    \]
    Since $\int_0^1\int_0^s \phi''(t) dt ds = \phi(1) - \phi(0) - \phi'(0) = E_k(x) - E_k(z) - \dualprod{E_k'(z)}{x-z}_{X_k^*,X_k}$, \eqref{eq:descent-final} -- \eqref{eq:support-final} follow.
\end{proof}

The following lemma shows that combining the local support inequalities with the global Lipschitz bound on $E_k'$ yields a family of three-point inequalities. These inequalities quantify how well the linearization at a possibly nonlocal point $z$ approximates the energy difference between nearby points. The estimates allow for a trade-off between the curvature term and the error $\norm{x - z}_{k,\circ}^2$ controlled by free parameters.\begin{lemma}[Three-point inequalities]
    \label{lem:true-3pt-hat}
    Suppose that \cref{ass:local-around-hat-final} holds for some $\hat x \in X_k$.
    Then for all $z,x\in X_k$,
    \[
        \dualprod{E_k'(z) - E_k'(\hat x)}{x - z}_{X_k^*,X_k}
        \ge
        - L'_k
        \norm{z - \hat x}_{k,\circ}
        \norm{x - z}_{k,\circ}.
    \]
    Moreover, for all $z\in X_k$, all $x,\bar x\in B(\hat x, \delta)$, and any $\theta>0$,
    \begin{equation}
        \label{eq:true-3pt-ii-hat}
        \dualprod{E_k'(z)}{x - \bar x}_{X_k^*,X_k}
        \ge
        E_k(x) - E_k(\bar x)
        +
        \frac{m_k}{2(1 + \theta)}\norm{x - \bar x}_{k,\circ}^2
        -
        \frac{(L'_k)^2(1 + \theta)}{2m_k\theta}
        \norm{x - z}_{k,\circ}^2.
    \end{equation}
    Moreover, for all $z\in X_k$, all $x\in B(\hat x, \delta)$, and any $\epsilon>0$ such that $\epsilon L'_k < m_k$,
    \begin{equation}
        \label{eq:true-3pt-i-hat}
        \dualprod{E_k'(z) - E_k'(\hat x)}{x - \hat x}_{X_k^*,X_k}
        \ge
        \left(
        m_k - \epsilon L'_k
        \right)
        \norm{x - \hat x}_{k,\circ}^2
        -
        \frac{L'_k}{4\epsilon}
        \norm{x - z}_{k,\circ}^2.
    \end{equation}
\end{lemma}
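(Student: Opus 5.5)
The three inequalities follow from \cref{cor:Ek-global-lipschitz} and \cref{lem:onestep-final}. In each case I split $E_k'(z)$ into a term evaluated inside $B(\hat x,\delta)$, where the local curvature bounds hold, plus a Lipschitz error term, and absorb the error with Young's inequality \eqref{eq:intro:young}. Throughout I use the generalised Cauchy--Schwarz inequality $\dualprod{x^*}{x}\le\norm{x^*}_{k,*}\norm{x}_{k,\circ}$ between the seminorm and its support function \cite[§2]{tuomov2024tracking}.

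\emph{First inequality.} Write $\dualprod{E_k'(z)-E_k'(\hat x)}{x-z}\ge-\norm{E_k'(z)-E_k'(\hat x)}_{k,*}\norm{x-z}_{k,\circ}$ by the Cauchy--Schwarz inequality. Then bound $\norm{E_k'(z)-E_k'(\hat x)}_{k,*}\le L_k'\norm{z-\hat x}_{k,\circ}$ by \cref{cor:Ek-global-lipschitz}, which is global, so $z$ need not be local. This step is immediate.

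\emph{Inequality \eqref{eq:true-3pt-ii-hat}.} Decompose $E_k'(z)=E_k'(x)+(E_k'(z)-E_k'(x))$. Since $x,\bar x\in B(\hat x,\delta)$, I apply the support inequality \eqref{eq:support-final} with the roles $(x,z)\mapsto(\bar x,x)$. Rearranged, this gives
\[
\dualprod{E_k'(x)}{x-\bar x}\ge E_k(x)-E_k(\bar x)+\tfrac{m_k}{2}\norm{x-\bar x}_{k,\circ}^2 .
\]
For the remainder, the Cauchy--Schwarz inequality and \cref{cor:Ek-global-lipschitz} give $\dualprod{E_k'(z)-E_k'(x)}{x-\bar x}\ge -L_k'\norm{x-z}_{k,\circ}\norm{x-\bar x}_{k,\circ}$. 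I then apply Young's inequality with weight $c=m_k\theta/(1+\theta)$:
\[
L_k'ab\le \frac{(L_k')^2}{2c}a^2+\frac{c}{2}b^2,
\qquad a\defeq\norm{x-z}_{k,\circ},\quad b\defeq\norm{x-\bar x}_{k,\circ}.
\]
This produces exactly the term $\frac{(L_k')^2(1+\theta)}{2m_k\theta}\norm{x-z}_{k,\circ}^2$. The remaining curvature coefficient is $\frac{m_k}{2}-\frac{m_k\theta}{2(1+\theta)}=\frac{m_k}{2(1+\theta)}$, as claimed.

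\emph{Inequality \eqref{eq:true-3pt-i-hat}.} Split $E_k'(z)-E_k'(\hat x)=(E_k'(x)-E_k'(\hat x))+(E_k'(z)-E_k'(x))$. Both $x$ and $\hat x$ lie in $B(\hat x,\delta)$, so I can apply \eqref{eq:support-final} twice, once with $(x,z)$ replaced by $(x,\hat x)$ and once by $(\hat x,x)$. Summing the two yields the local strong monotonicity
\[
\dualprod{E_k'(x)-E_k'(\hat x)}{x-\hat x}\ge m_k\norm{x-\hat x}_{k,\circ}^2 .
\]
The remainder is bounded below by $-L_k'\norm{x-z}_{k,\circ}\norm{x-\hat x}_{k,\circ}$. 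Young's inequality in the form $ab\le \epsilon b^2+\frac{1}{4\epsilon}a^2$ turns this into
\[
-\epsilon L_k'\norm{x-\hat x}_{k,\circ}^2-\frac{L_k'}{4\epsilon}\norm{x-z}_{k,\circ}^2 .
\]
Combining the two bounds gives \eqref{eq:true-3pt-i-hat}. The condition $\epsilon L_k'<m_k$ only ensures that the resulting coefficient is positive.

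\emph{Main obstacle.} The computations themselves are routine. The only point needing care is that the local support inequality is used only at pairs of points inside $B(\hat x,\delta)$: namely $(x,\bar x)$ and $(x,\hat x)$, and never pairs involving $z$. The possibly nonlocal point $z$ must enter only through the global Lipschitz bound. The choice of Young weights then has to be matched to the stated constants.
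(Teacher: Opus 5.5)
Your proof is correct and follows the paper's proof essentially line by line. The first inequality uses Cauchy--Schwarz together with the global Lipschitz bound. For \eqref{eq:true-3pt-ii-hat}, you apply \eqref{eq:support-final} at $(\bar x,x)$, split $E_k'(z)=E_k'(x)+(E_k'(z)-E_k'(x))$, and use Young's inequality with the same weight $m_k\theta/(1+\theta)$. For \eqref{eq:true-3pt-i-hat}, summing \eqref{eq:support-final} at $(x,\hat x)$ and $(\hat x,x)$ gives local strong monotonicity, followed by the same split and Young's inequality in the form $ab\le\epsilon b^2+a^2/(4\epsilon)$.
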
\begin{proof}
    The first estimate follows from the definition of $\norm{\cdot}_{k,*}$ and the global Lipschitz continuity of $E_k'$, and the Cauchy--Schwarz inequality between $\norm{\cdot}_{k,*}$ and $\norm{\cdot}_{k,\circ}$ (see \cite[§2]{tuomov2024tracking})
    \[
        \begin{aligned}
            \dualprod{E_k'(z) - E_k'(\hat x)}{x - z}_{X_k^*,X_k}
             &
            \ge
            -
            \norm{E_k'(z) - E_k'(\hat x)}_{k,*}
            \norm{x - z}_{k,\circ}
            \ge
            - L'_k
            \norm{z - \hat x}_{k,\circ}
            \norm{x - z}_{k,\circ}.
        \end{aligned}
    \]
    To prove \eqref{eq:true-3pt-ii-hat}, let $z\in X_k$ and $x,\bar x\in B(\hat x,\delta)$. Applying \eqref{eq:support-final} to $(\bar x,x)$ gives
    \[
        \dualprod{E_k'(x)}{x - \bar x}_{X_k^*,X_k}
        \ge
        E_k(x) - E_k(\bar x)
        +
        \frac{m_k}{2}\norm{x - \bar x}_{k,\circ}^2.
    \]
    Hence, by the global Lipschitz continuity of $E_k'$ proven in \cref{cor:Ek-global-lipschitz},
    \[
        \begin{aligned}
            \dualprod{E_k'(z)}{x - \bar x}_{X_k^*,X_k}
             &
            =
            \dualprod{E_k'(x)}{x - \bar x}_{X_k^*,X_k}
            +
            \dualprod{E_k'(z) - E_k'(x)}{x - \bar x}_{X_k^*,X_k}
            \\
             &
            \ge
            E_k(x) - E_k(\bar x)
            +
            \frac{m_k}{2}\norm{x - \bar x}_{k,\circ}^2
            -
            L'_k\norm{x - z}_{k,\circ}\norm{x - \bar x}_{k,\circ}.
        \end{aligned}
    \]
    For any $\theta>0$, Young's inequality gives
    \[
        L'_k\norm{x - z}_{k,\circ}\norm{x - \bar x}_{k,\circ}
        \le
        \frac{m_k\theta}{2(1 + \theta)}\norm{x - \bar x}_{k,\circ}^2
        +
        \frac{(L'_k)^2(1 + \theta)}{2m_k\theta}\norm{x - z}_{k,\circ}^2.
    \]
    Substituting this yields \eqref{eq:true-3pt-ii-hat}.

    To prove \eqref{eq:true-3pt-i-hat}, apply \eqref{eq:support-final} to $(x,\hat x)$ and to $(\hat x,x)$ to obtain
    \[
        \begin{aligned}
            E_k(x) - E_k(\hat x) - \dualprod{E_k'(\hat x)}{x - \hat x}_{X_k^*,X_k}
             &
            \ge
            \frac{m_k}{2}\norm{x - \hat x}_{k,\circ}^2,
            \\
            E_k(\hat x) - E_k(x) - \dualprod{E_k'(x)}{\hat x - x}_{X_k^*,X_k}
             &
            \ge
            \frac{m_k}{2}\norm{x - \hat x}_{k,\circ}^2.
        \end{aligned}
    \]
    Summing these gives
    \[
        \dualprod{E_k'(x) - E_k'(\hat x)}{x - \hat x}_{X_k^*,X_k}
        \ge
        m_k\norm{x - \hat x}_{k,\circ}^2.
    \]
    Therefore, by the global Lipschitz continuity of $E_k'$,
    \[
        \begin{aligned}
            \dualprod{E_k'(z) - E_k'(\hat x)}{x-\hat x}_{X_k^*,X_k}
             &
            =
            \dualprod{E_k'(x) - E_k'(\hat x)}{x-\hat x}_{X_k^*,X_k}
            +
            \dualprod{E_k'(z) - E_k'(x)}{x-\hat x}_{X_k^*,X_k}
            \\
             &
            \ge
            m_k\norm{x - \hat x}_{k,\circ}^2
            -
            L'_k\norm{x - z}_{k,\circ}\norm{x-\hat x}_{k,\circ}.
        \end{aligned}
    \]
    Young's inequality gives
    \[
        L'_k\norm{x - z}_{k,\circ}\norm{x - \hat x}_{k,\circ}
        \le
        \epsilon L'_k\norm{x - \hat x}_{k,\circ}^2
        +
        \frac{L'_k}{4\epsilon}\norm{x - z}_{k,\circ}^2.
    \]
    Substituting this yields \eqref{eq:true-3pt-i-hat}.
\end{proof}

We conclude with a corollary that explicitly proves the assumptions of \cref{thm:tracking:smoothness-verification} regarding $E_k'$.
Thus combined with \cref{thm:tracking:smoothness-verification} , the next result completes the verification of \cref{ass:pd:smoothness} for the estimates $\thisestdiff$.

\begin{corollary}[Instantiation of three-point inequalities]
    \label{cor:3pt-to-ass-smoothness}
    Let $\delta > 0$ and suppose that \cref{ass:local-around-hat-final} holds with center $\realoptx^k$ for any $\this\realoptu = (\this{\realoptx}, \this{\realopty}) \in H_k^{-1}(0)$, and with center $\bar x^k$ for the $k$th component $\bar x^k$ of every comparison sequence $\bar x^{1:k} \in \PpredictConstr_{1:k}$ satisfying $\bar x^k \in B(\predicted{x}{k}, \delta)$, with the same constants $m_k$ and $L'_k$.
    Then $E'_k$ is globally Lipschitz continuous and the three-point inequalities \cref{eq:true-3pt-ii-hat-ii,eq:true-3pt-i-hat-ii} from \cref{thm:tracking:smoothness-verification} hold on $B(\realoptx^k, \delta)$ and on $B(\bar x^k, \delta)$ with the following explicit constants:
    \begin{enumerate}[label=(\roman*)]
        \item\label{itm:inst-tpi:i}
              For any $\theta > 0$, for every comparison sequence satisfying $\bar x^{1:N} \in \PpredictConstr_{1:N} \cap \prod_{i=1}^N B(\predicted{x}{i},\delta)$, for all $x \in B(\bar x^k, \delta)$ and all $z \in X_k$,
              \[
                  \dualprod{E_k'(z)}{x - \bar x^k}_{X_k^*,X_k}
                  \ge
                  E_k(x) - E_k(\bar x^k)
                  +
                  \frac{m_k}{2(1 + \theta)}\norm{x - \bar x^k}_{k,\circ}^2
                  -
                  \frac{(L'_k)^2(1 + \theta)}{2m_k\theta}\norm{x - z}_{k,\circ}^2.
              \]
              In particular, if $x^k \in B(\bar x^k, \delta)$, choosing $x = x^k$ and $z = \predicted{x}{k}$ yields \eqref{eq:true-3pt-ii-hat-ii} with
              \begin{equation}
                  \label{eq:c1c2-choice}
                  \gamma_1 \defeq \frac{m_k}{2(1 + \theta)},
                  \quad
                  L_1 \defeq \frac{(L'_k)^2(1 + \theta)}{2m_k\theta},
              \end{equation}
              i.e.
              \[
                  \dualprod{E_k'(\predicted{x}{k})}{x^k - \bar x^k}_{X_k^*,X_k}
                  \ge
                  E_k(x^k) - E_k(\bar x^k)
                  +
                  \gamma_1\norm{x^k - \bar x^k}_{k,\circ}^2
                  -
                  L_1\norm{x^k - \predicted{x}{k}}_{k,\circ}^2.
              \]

        \item\label{itm:inst-tpi:ii}
              Let $\epsilon > 0$ satisfy $\epsilon L'_k < m_k$. Then for any $\this\realoptu = (\this{\realoptx}, \this{\realopty}) \in H_k^{-1}(0)$, for all $x \in B(\realoptx^k, \delta)$ and all $z \in X_k$,
              \[
                  \dualprod{E_k'(z) - E_k'(\realoptx^k)}{x - \realoptx^k}_{X_k^*,X_k}
                  \ge
                  \left(
                  m_k - \epsilon L'_k
                  \right)
                  \norm{x - \realoptx^k}_{k,\circ}^2
                  -
                  \frac{L'_k}{4\epsilon}
                  \norm{x - z}_{k,\circ}^2.
              \]
              In particular, if $x^k \in B(\realoptx^k, \delta)$, choosing $x = x^k$ and $z = \predicted{x}{k}$ yields \eqref{eq:true-3pt-i-hat-ii} with
              \begin{equation}
                  \label{eq:c3c4-choice}
                  \gamma_2 \defeq m_k - \epsilon L'_k,
                  \quad
                  L_2 \defeq \frac{L'_k}{4\epsilon},
              \end{equation}
              i.e.
              \[
                  \dualprod{E_k'(\predicted{x}{k})
                      -
                      E_k'(\realoptx^k)}{x^k - \realoptx^k}_{X_k^*,X_k}
                  \ge
                  \gamma_2\norm{x^k - \realoptx^k}_{k,\circ}^2
                  -
                  L_2\norm{x^k - \predicted{x}{k}}_{k,\circ}^2.
              \]
    \end{enumerate}
\end{corollary}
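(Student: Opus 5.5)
The corollary is essentially a direct instantiation of \cref{lem:true-3pt-hat} and \cref{cor:Ek-global-lipschitz}, so the plan is to apply those results with the appropriate centres. First, global Lipschitz continuity of $E_k'$ with constant $L'_k$ follows immediately from \cref{cor:Ek-global-lipschitz}. This requires only that \cref{ass:local-around-hat-final} holds for some centre; the hypothesis supplies it for every $\realoptx^k$ and every admissible $\bar x^k$. Since the global constants $\bar L_{S,k},\bar M_k,\bar r_k$ do not depend on the centre, the same $L'_k$ works throughout.

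For \ref{itm:inst-tpi:i}, we fix a comparison sequence with $\bar x^k \in B(\predicted{x}{k},\delta)$. We then invoke \cref{ass:local-around-hat-final} with centre $\hat x \defeq \bar x^k$ and apply \eqref{eq:true-3pt-ii-hat} of \cref{lem:true-3pt-hat} with $\bar x \defeq \bar x^k$, arbitrary $x \in B(\bar x^k,\delta)$ and $z \in X_k$. The only membership that needs checking is $\bar x \in B(\hat x,\delta)$, and this is trivial because $\bar x=\hat x$. The displayed inequality then follows verbatim. Specialising to $x=x^k$ and $z=\predicted{x}{k}$ and reading off the coefficients gives \eqref{eq:true-3pt-ii-hat-ii} with $\gamma_1,L_1$ as in \eqref{eq:c1c2-choice}. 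We also note that $\gamma_1,L_1>0$ because $m_k>0$ and $\theta>0$.

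For \ref{itm:inst-tpi:ii}, we take any $\this\realoptu\in H_k^{-1}(0)$ and invoke \cref{ass:local-around-hat-final} with centre $\hat x\defeq\realoptx^k$. Then \eqref{eq:true-3pt-i-hat} of \cref{lem:true-3pt-hat} applies for all $x\in B(\realoptx^k,\delta)$, all $z\in X_k$ and every $\epsilon>0$ with $\epsilon L'_k<m_k$. Choosing $x=x^k$ and $z=\predicted{x}{k}$ yields \eqref{eq:true-3pt-i-hat-ii} with $\gamma_2=m_k-\epsilon L'_k>0$ and $L_2=L'_k/(4\epsilon)$.

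The only point needing care, and the main obstacle such as it is, is bookkeeping: the constants must be uniform across the different centres. This is exactly why the statement requires the same $m_k$ and $L'_k$ for all centres, so that the resulting $\gamma_1,\gamma_2,L_1,L_2$ are independent of the particular $\bar x^k$ or $\realoptx^k$, as \cref{thm:tracking:smoothness-verification} demands. I would also remark that \eqref{eq:true-3pt-ii-hat-ii} uses $\gamma_1$ without the factor $1/2$, matching the statement, so there is no mismatch in the coefficient conventions.
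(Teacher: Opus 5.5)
Your proposal is correct and matches the paper's proof essentially step for step. Both take global Lipschitz continuity from \cref{cor:Ek-global-lipschitz}, apply \cref{lem:true-3pt-hat} with centre $\bar x^k$ (with $\bar x=\bar x^k$, $z=\predicted{x}{k}$) for (i) and with centre $\realoptx^k$ (with $z=\predicted{x}{k}$) for (ii), and read off $\gamma_1,L_1,\gamma_2,L_2$. One small quibble: your remark that $L_1>0$ follows from $m_k,\theta>0$ also needs $L'_k>0$, which is harmless since $L_1$ and $L_2$ can always be enlarged.
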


\begin{proof}
    The Lipschitz continuity is proven by \cref{cor:Ek-global-lipschitz}. Next, apply \cref{lem:true-3pt-hat}.
    For \cref{itm:inst-tpi:i}, fix a comparison sequence $\bar x^{1:k} \in \PpredictConstr_{1:k}$ whose $k$th component $\bar x^k$ satisfies $\bar x^k \in B(\predicted{x}{k}, \delta)$. Apply \cref{lem:true-3pt-hat} with center $\bar x^k$, take $z = \predicted{x}{k}$ and $\bar x = \bar x^k$, and read off $\gamma_1$ and $L_1$ as in \eqref{eq:c1c2-choice}.
    For \cref{itm:inst-tpi:ii}, fix any $\this\realoptu = (\this{\realoptx}, \this{\realopty}) \in H_k^{-1}(0)$. Apply \cref{lem:true-3pt-hat} with center $\realoptx^k$, take $z = \predicted{x}{k}$, and read off $\gamma_2$ and $L_2$ as in \eqref{eq:c3c4-choice}.
\end{proof}

\section{Construction of online inner and adjoint algorithms}
\label{sec:inner-adjoint}

In this section, we present inner and adjoint algorithms that satisfy \cref{ass:tracking:main}.
Let $U_k$ and $W_k$ be normed spaces, and let
\[
    d_{U_k}(u, \tilde u) = \norm{u-\tilde u}_{U_k}
    \quad\text{and}\quad
    d_{W_k}(w, \tilde w) = \norm{w-\tilde w}_{W_k}.
\]
To prove \cref{ass:tracking:main}\,\cref{item:tracking:main:inner-tracking,item:tracking:main:adjoint-tracking}, our strategy is to combine contractivity properties of the algorithms with stability properties of the predictors and solution mappings. More precisely, we use:
\begin{enumerate}
    \item \textbf{Non-parametric inner contractivity:} The inner iterate $\nextu$ is computed from $\thisu$ by single or multiple steps of a standard optimisation or linear system splitting algorithm. This algorithm satisfies, for some $\bar \kappa_u >1 $, the contractivity
          \begin{equation}
              \label{eq:inner-adjoint:inner-contr}
              \bar \kappa_u \norm{\nextu - S_{k+1,u}(\predicted{x}{k+1})}_{U_{k+1}}
              \le
              \norm{\predicted{u}{k+1} - S_{k+1,u}(\predicted{x}{k+1})}_{U_{k+1}}.
          \end{equation}
          This ensures that each inner iteration advances towards the inner solution $S_{k+1,u}(\predicted{x}{k+1})$ for the parameter $\predicted{x}{k+1}$, and the current data embedded in the inner objective.

    \item \textbf{Non-parametric adjoint contractivity:} Similarly, the adjoint variable $\nexxt w$ is computed from $\this w$ by an algorithm.
          This algorithm satisfies for some $\bar \kappa_w >1$ and $L_w \geq 0$ the contractivity
          \begin{equation}
              \label{eq:inner-adjoint:adjoint-contr}
              \bar \kappa_w \norm{ w^{k+1} - S_{k+1,w}(\predicted{x}{k+1})}_{W_{k+1}}
              \leq
              \norm{\predicted{w}{k+1} - S_{k+1,w}(\predicted{x}{k+1})}_{W_{k+1}}
              + L_w \norm{u^{k+1} - S_{k+1,u}(\predicted{x}{k+1})}_{U_{k+1}}.
          \end{equation}

    \item \textbf{Bounded Lipschitz predictors:}
          Introduce the outer primal-dual, inner, anad adjoint predictor mappings
          \[
              P_k : \Omega_{k-1} \to \Omega_k, \quad
              Q_k : U_{k-1} \to U_k, \quad
              \tilde{Q}_k : W_{k-1} \to W_k.
          \]
          Then
          \[
              \predicted{x}{k} \defeq P_k(x^{k-1}), \quad
              \predicted{u}{k} \defeq Q_k(u^{k-1}), \quad
              \predicted{w}{k} \defeq \tilde{Q}_k(w^{k-1}).
          \]
          These predictors describe how iterates and auxiliary variables are transported between successive time steps and data frames.
          We will generally assume these predictors to be bounded and Lipschitz. More precisly, the predictor $Q_k$ is $L_Q$-Lipschitz:
          \begin{equation}
              \label{eq:inner-adjoint:inner-pred-lipschitz}
              \norm{Q_k(u) - Q_k(v)}_{U_{k}} \le L_Q \norm{u - v}_{U_{k-1}},\quad\text{for all}\quad u,v \in U_{k-1}.
          \end{equation}
          Also the adjoint predictor $\tilde Q_k: W_k \to W_{k+1}$ be $L_{\tilde Q}$-Lipschitz, i.e.,
          \begin{equation}
              \label{eq:inner-adjoint:adjoint-pred-lipschitz}
              \norm{\tilde Q_k(w_1) - \tilde Q_k(w_2)}_{W_{k}} \leq L_{\tilde Q} \norm{w_1 - w_2}_{W_{k-1}}
              \quad\text{for all}\quad w_1, w_2 \in W_{k-1}.
          \end{equation}

    \item \textbf{Lipschitz solution mappings:}
          The inner problem solution mapping $S_{k,u}$ is $L_{S_u}$-Lipschitz on $\Omega_k$, i.e.,
          \begin{equation}
              \label{eq:inner-adjoint:inner-sol-lipschitz}
              \norm{S_{k,u}(x) - S_{k,u}(\tilde{x})}_{U_k} \le L_{S_u} \norm{x - \tilde{x}}_{k,\circ},\quad\text{for all}\quad x, \tilde{x} \in \Omega_k.
          \end{equation}
          Likewise, the adjoint solution mapping $S_{k,w}$ is $L_{S_w}$-Lipschitz on $\Omega_k$, i.e.,
          \begin{equation}
              \label{eq:inner-adjoint:adjoint-sol-lipschitz}
              \norm{S_{k,w}(x) - S_{k,w}(\tilde x)}_{W_k} \leq L_{S_w} \norm{x - \tilde x}_{k,\circ}
              \quad\text{for all}\quad x, \tilde x \in \Omega_k,
          \end{equation}
          These conditions quantify the stability of the inner and adjoint solution maps with respect to perturbations in the parameter $x$.
\end{enumerate}
We proceed with this plan by deriving general results in \cref{sec:inner-adjoint:inner,sec:inner-adjoint:adjoint}, for inner and adjoint algorithms, respectively.
In \cref{sec:inner-adjoint:differential-transformation}, we then treat the differential tarnsformation \cref{ass:tracking:main}\,\cref{item:tracking:main:differential-transformation}.
Finally, in \cref{sec:inner-adjoint:linear} we concentrate on the specific case linear system splitting methods for the inner and adjoint PDE, relevant to our EIT application.

\subsection{Composed estimates for inner algorithms}
\label{sec:inner-adjoint:inner}

The following lemma proves \cref{ass:tracking:main}\,\cref{item:tracking:main:inner-tracking} subject to the errors $\innerError{k}$ still having an unquantified form, which will depend on properties of the predictor $Q_k$ that we analyse afterwards.\begin{lemma}
    \label{lemma:inner-adjoint:inner-simple-contractive}
    For all $k\ge 1$, assume that \cref{eq:inner-adjoint:inner-pred-lipschitz} (Lipschitz inner predictor $Q_k$) and \eqref{eq:inner-adjoint:inner-sol-lipschitz} (Lipschitz inner solution mapping $S_{k,u}$) hold.
    Also suppose we are given an inner algorithm that, given
    $
        \big((x^n, \predicted{x}{n}, u^n)\big)_{n=0}^k \in \prod_{n=0}^k \localset_n \times \localset_n \times U_n
    $
    produces $\nextu \in U_{k+1}$ satisfying for a constant $\bar \kappa_u>1$ the basic contractivity \eqref{eq:inner-adjoint:inner-contr}.
    Then \cref{ass:tracking:main}\,\cref{item:tracking:main:inner-tracking} holds, more precisely
    \begin{gather*}
        \begin{split}
            \frac{\bar \kappa_u}{L_Q} \norm{u^{k+1} - S_{k+1,u}(\predicted{x}{k+1})}_{U_{k+1}}
             &
            \leq
            \norm{u^k - S_{k,u}(\predicted{x}{k})}_{U_k} + L_{S_u} \norm{\predicted{x}{k} - \thisx}_{k,\circ} + \innerError{k}
        \end{split}
        \shortintertext{for}
        \innerError{k}
        \defeq
        \inv L_Q\norm{Q_{k+1}(S_{k,u}(\thisx)) - S_{k+1,u}(P_{k+1}(\thisx))}_{U_{k+1}}.
    \end{gather*}
\end{lemma}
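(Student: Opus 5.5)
The argument is a chain of triangle inequalities. It starts from the basic contractivity \eqref{eq:inner-adjoint:inner-contr} and inserts the predictor and solution-map Lipschitz bounds. We then divide by $L_Q$, which puts the claim into the form of \cref{ass:tracking:main}\,\cref{item:tracking:main:inner-tracking} with $\kappa_u = \bar\kappa_u/L_Q$ and $\pi_u = L_{S_u}$. The requirement $\kappa_u>1$ amounts to $\bar\kappa_u > L_Q$, which we implicitly need for the assumption to hold in its stated form.

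First, \eqref{eq:inner-adjoint:inner-contr} gives
\[
    \bar\kappa_u \norm{u^{k+1} - S_{k+1,u}(\predicted{x}{k+1})}_{U_{k+1}}
    \le \norm{Q_{k+1}(u^k) - S_{k+1,u}(\predicted{x}{k+1})}_{U_{k+1}},
\]
using $\predicted{u}{k+1} = Q_{k+1}(u^k)$. On the right-hand side we add and subtract two terms. The first is $Q_{k+1}(S_{k,u}(\predicted{x}{k}))$, so that the Lipschitz bound \eqref{eq:inner-adjoint:inner-pred-lipschitz} yields $L_Q\norm{u^k - S_{k,u}(\predicted{x}{k})}_{U_k}$. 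The second is $Q_{k+1}(S_{k,u}(\thisx))$. The difference $Q_{k+1}(S_{k,u}(\predicted{x}{k})) - Q_{k+1}(S_{k,u}(\thisx))$ is bounded, using \eqref{eq:inner-adjoint:inner-pred-lipschitz} and then \eqref{eq:inner-adjoint:inner-sol-lipschitz} (valid since $\predicted{x}{k}, \thisx \in \localset_k$), by $L_Q L_{S_u}\norm{\predicted{x}{k} - \thisx}_{k,\circ}$. What remains is $\norm{Q_{k+1}(S_{k,u}(\thisx)) - S_{k+1,u}(\predicted{x}{k+1})}_{U_{k+1}}$. Since $\predicted{x}{k+1} = P_{k+1}(\thisx)$, this is exactly $L_Q\innerError{k}$. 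Dividing the whole chain by $L_Q$ gives the claim.

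The main point needing care is the index convention for the predictors. Here $\predicted{x}{k+1}$ must equal $P_{k+1}(\thisx)$ and $\predicted{u}{k+1}$ must equal $Q_{k+1}(u^k)$, consistent with the definitions $\predicted{x}{k} \defeq P_k(x^{k-1})$ and so on; otherwise the residual term does not match the stated $\innerError{k}$. We also need the intermediate points $S_{k,u}(\predicted{x}{k})$ and $S_{k,u}(\thisx)$ to lie in the domain $U_k$ of $Q_{k+1}$, which is automatic. Finally, the Lipschitz bound for $S_{k,u}$ requires both arguments to lie in $\localset_k$, which is guaranteed by the hypothesis on the given iterate tuple.
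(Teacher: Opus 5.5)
Your proposal is correct and follows essentially the same route as the paper: you add and subtract $Q_{k+1}(S_{k,u}(\predicted{x}{k}))$ and $Q_{k+1}(S_{k,u}(\thisx))$, apply the two Lipschitz bounds, combine with the contractivity \eqref{eq:inner-adjoint:inner-contr}, and divide by $L_Q$ using $\predicted{x}{k+1}=P_{k+1}(\thisx)$. Your remark that matching \cref{ass:tracking:main}\,\cref{item:tracking:main:inner-tracking} with $\kappa_u=\bar\kappa_u/L_Q>1$ tacitly requires $\bar\kappa_u>L_Q$ is valid; the paper leaves this implicit.
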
\begin{proof}
    Using the definition of $\breve u^{k+1}$, we get
    \[
        \breve u^{k+1} - S_{k+1,u}(\predicted{x}{k+1})
        =
        [Q_{k+1}(u^k) - Q_{k+1}(S_{k,u}(\predicted{x}{k}))] + [Q_{k+1}(S_{k,u}(\predicted{x}{k})) - S_{k+1,u}(\predicted{x}{k+1})].
    \]
    Then
    \begin{equation}
        \label{eq:inner-adjoint:inner-simple-contractive:2}
        \begin{split}
            \norm{\breve u^{k+1} - S_{k+1,u}(\predicted{x}{k+1})}_{U_{k+1}}
             &
            \leq
            \norm{Q_{k+1}(u^k) - Q_{k+1}(S_{k,u}(\predicted{x}{k}))}_{U_{k+1}}
            \\
            \MoveEqLeft[-1]
            + \norm{Q_{k+1}(S_{k,u}(\predicted{x}{k})) - S_{k+1,u}(\predicted{x}{k+1})}_{U_{k+1}}
            \\
             &
            \le
            L_Q \norm{u^k - S_{k,u}(\predicted{x}{k})}_{U_k}
            + \norm{Q_{k+1}(S_{k,u}(\predicted{x}{k})) - Q_{k+1}(S_{k,u}(\thisx))}_{U_{k+1}}
            \\
            \MoveEqLeft[-1]
            + \norm{Q_{k+1}(S_{k,u}(\thisx)) - S_{k+1,u}(\predicted{x}{k+1})}_{U_{k+1}}
            \\
             &
            \le
            L_Q \norm{u^k - S_{k,u}(\predicted{x}{k})}_{U_k}
            +  L_Q L_{S_u}\norm{\predicted{x}{k} - \thisx}_{k,\circ}
            \\
            \MoveEqLeft[-1]
            + \norm{Q_{k+1}(S_{k,u}(\thisx)) - S_{k+1,u}(\predicted{x}{k+1})}_{U_{k+1}}.
        \end{split}
    \end{equation}
    Combining \cref{eq:inner-adjoint:inner-contr,eq:inner-adjoint:inner-simple-contractive:2}, and using $ P_{k+1}(x^k) = \predicted{x}{k+1}$, we obtain the claim.
\end{proof}\begin{remark}[Inner algorithms]
    Several standard algorithms satisfy the contractivity assumption \eqref{eq:inner-adjoint:inner-contr} under appropriate conditions.
    As our focus is on PDEs, we treat linear system splitting in \cref{sec:inner-adjoint:linear}.
    For inner optimisation problems, forward-backward splitting is treated in \cite{tuomov2024tracking}.
\end{remark}\begin{remark}[Remainder term]
    \label{rem:iner-adjoint:sol-map-data-error}
    The error term $\innerError{k}$ is a commutativity error between the solution mappings and the predictions of the parameter and the solutions.
    For \cref{cor:pd:main} we want this term to be at least bounded, preferrably with a bounded sum over $k \in \N$.
    For identity predictions when $X_{k+1}=X_k$ and $U_{k+1}=U_k$,
    $
        \innerError{k}
        =
        \norm{S_{k,u}(\thisx) - S_{k+1,u}(\thisx)}_{U_{k+1}},
    $
    so the term depends on the continuity properties of the solution mapping with respect to the embedded data, encoded in the index $k$.

    More generally, for a conservative estimate when $X_k=X_{k+1}$ and $U_k=U_{k+1}$, if $S_{k,u}$ is $L_{S,u}$-Lipschitz for each $k$, we have
    \[
        \begin{split}
            \innerError{k}
             &
            =
            \inv L_Q\norm{[Q_{k+1}-\Id](S_{k,u}(\thisx))+S_{k,u}(\thisx) - S_{k+1,u}(P_{k+1}(\thisx))}_{U_{k+1}}
            \\
             &
            \le
            \inv L_Q\norm{[Q_{k+1}-\Id](S_{k,u}(\thisx))}_{U_{k+1}}
            +\inv L_Q\norm{S_{k,u}(\thisx) - S_{k+1,u}(\thisx)}_{U_{k+1}}
            +\frac{L_{S,u}}{L_Q}\norm{(P_{k+1}-\Id)(\thisx)}_{k+1,\circ}.
        \end{split}
    \]
    The first and last term can be controlled and even made small if $P_{k+1} \approx \Id$, $Q_{k+1} \approx \Id$, and the solution mappings and iterates are bounded.
    Regarding the middle term, for parametrised linear systems $T_k(x,u) = A_{k,x}u-b_{k,x}$, we have $S_{k,u}(x)=\inv A_{k,x}b_{k,x}$, so can further estimate
    \[
        \begin{split}
            \norm{S_{k,u}(\thisx) - S_{k+1,u}(\thisx)}
             &
            =
            \norm{\inv A_{k,\thisx}b_{k,\thisx}-\inv A_{k+1,\thisx}b_{k+1,\thisx}}
            \\
             &
            =
            \norm{(\inv A_{k,\thisx}-\inv A_{k+1,\thisx})b_{k,\thisx}-\inv A_{k+1,\thisx}(b_{k+1,\thisx}-b_{k,\thisx)}}
            \\
             &
            \le
            \norm{(\inv A_{k,\thisx}-\inv A_{k+1,\thisx})}\norm{b_{k,\thisx}}
            +\norm{\inv A_{k+1,\thisx}}\norm{b_{k+1,\thisx}-b_{k,\thisx}}.
        \end{split}
    \]
    This can, again, be controlled if the dependence of $\inv A_{k,\thisx}$ and $b_{k,\thisx}$ on the data (encoded in the index $k$) is continuous.
\end{remark}

\subsection{Composed estimates for adjoint algorithms}
\label{sec:inner-adjoint:adjoint}

We now repeat the overall idea of the arguments of the previous subsection for \cref{ass:tracking:main}\,\cref{item:tracking:main:adjoint-tracking}.\begin{lemma}
    \label{lemma:inner-adjoint:adjoint-simple-contractive}
    For all $k \ge 1$, assume \cref{eq:inner-adjoint:adjoint-pred-lipschitz} (Lipschitz adjoint predictor $\tilde Q_k$) and \cref{eq:inner-adjoint:adjoint-sol-lipschitz} (Lipschitz adjoint solution mapping $S_{k,w}$).
    Also suppose that we are given an adjoint algorithm that, given $(x^n,\predicted{x}{n},u^n)_{n=0}^k \subset \prod_{n=0}^k \localset_n \times \localset_n\times U_{n}$ and $(x^{k+1}, u^{k+1}) \in \localset_{k+1} \times U_{k+1}$, produces a $\nexxt w \in W_{k+1}$ that satisfies the basic adjoint contractivity condition \eqref{eq:inner-adjoint:adjoint-contr} with constants $\bar \kappa_w > 1$ and $L_w \geq 0$.
    Then \cref{ass:tracking:main}\,\cref{item:tracking:main:adjoint-tracking} holds, more precisely
    \begin{gather*}
        \begin{split}
            \frac{\bar \kappa_w}{L_{\tilde Q}} \norm{w^{k+1} - S_{k+1,w}(\predicted{x}{k+1})}_{W_{k+1}}
             &
            \le
            \norm{w^k - S_{k,w}(\predicted{x}{k})}_{W_k} + \frac{L_w}{L_{\tilde Q}} \norm{u^{k+1} - S_{k+1,u}(\predicted{x}{k+1})}_{U_{k+1}}
            \\
            \MoveEqLeft[-1]
            +  L_{S_w}\norm{\predicted{x}{k} - \thisx}_{X_k}  + \adjointError{k}
        \end{split}
        \shortintertext{for}
        \adjointError{k} = \inv L_{\tilde Q} \norm{\tilde Q_{k+1}(S_{k,w}(\thisx)) - S_{k+1,w}(\predicted{x}{k+1})}_{W_{k+1}}.
    \end{gather*}
\end{lemma}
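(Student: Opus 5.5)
We mirror the proof of \cref{lemma:inner-adjoint:inner-simple-contractive}. The strategy is to bound the distance of the predicted adjoint $\predicted{w}{k+1} = \tilde Q_{k+1}(w^k)$ to the adjoint solution $S_{k+1,w}(\predicted{x}{k+1})$. We then feed this bound into the basic adjoint contractivity \eqref{eq:inner-adjoint:adjoint-contr}, which already contains the coupling term $L_w \norm{u^{k+1} - S_{k+1,u}(\predicted{x}{k+1})}_{U_{k+1}}$.

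First we insert and subtract $\tilde Q_{k+1}(S_{k,w}(\predicted{x}{k}))$ and $\tilde Q_{k+1}(S_{k,w}(\thisx))$, writing
\[
    \predicted{w}{k+1} - S_{k+1,w}(\predicted{x}{k+1})
    =
    [\tilde Q_{k+1}(w^k) - \tilde Q_{k+1}(S_{k,w}(\predicted{x}{k}))]
    + [\tilde Q_{k+1}(S_{k,w}(\predicted{x}{k})) - \tilde Q_{k+1}(S_{k,w}(\thisx))]
    + [\tilde Q_{k+1}(S_{k,w}(\thisx)) - S_{k+1,w}(\predicted{x}{k+1})].
\]
The triangle inequality and the Lipschitz property \eqref{eq:inner-adjoint:adjoint-pred-lipschitz} of $\tilde Q_{k+1}$ bound the first bracket by $L_{\tilde Q}\norm{w^k - S_{k,w}(\predicted{x}{k})}_{W_k}$. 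The second bracket is bounded by $L_{\tilde Q}$ times $\norm{S_{k,w}(\predicted{x}{k}) - S_{k,w}(\thisx)}_{W_k}$. By \eqref{eq:inner-adjoint:adjoint-sol-lipschitz}, applicable since $\predicted{x}{k},\thisx\in\localset_k$, this is at most $L_{\tilde Q}L_{S_w}\norm{\predicted{x}{k}-\thisx}_{k,\circ}$. The third bracket is exactly $L_{\tilde Q}\adjointError{k}$ by definition, since $\predicted{x}{k+1} = P_{k+1}(\thisx)$.

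Next we plug this into \eqref{eq:inner-adjoint:adjoint-contr}, giving
\[
    \bar\kappa_w \norm{w^{k+1} - S_{k+1,w}(\predicted{x}{k+1})}_{W_{k+1}}
    \le
    L_{\tilde Q}\bigl(\norm{w^k - S_{k,w}(\predicted{x}{k})}_{W_k} + L_{S_w}\norm{\predicted{x}{k}-\thisx}_{k,\circ} + \adjointError{k}\bigr)
    + L_w\norm{u^{k+1} - S_{k+1,u}(\predicted{x}{k+1})}_{U_{k+1}}.
\]
Dividing by $L_{\tilde Q}$ yields the displayed claim. Reading it against \cref{ass:tracking:main}\,\cref{item:tracking:main:adjoint-tracking} gives $\kappa_w = \bar\kappa_w/L_{\tilde Q}$, $\primaldifffact = L_w/L_{\tilde Q}$ and $\pi_w = L_{S_w}$.

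The only delicate point is bookkeeping rather than a genuine obstacle. The statement writes $\norm{\predicted{x}{k}-\thisx}_{X_k}$, whereas \eqref{eq:inner-adjoint:adjoint-sol-lipschitz} is phrased in $\norm{\cdot}_{k,\circ}$. We will use the semi-norm throughout, so that the result matches \cref{ass:tracking:main}. In addition, the requirement $\kappa_w>1$ of that assumption needs $\bar\kappa_w > L_{\tilde Q}$; this must be imposed, just as for the inner case. We assume $L_{\tilde Q}>0$, so that the division is legitimate.
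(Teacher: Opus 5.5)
Your proposal is correct and follows the paper's proof: the paper also inserts $\tilde Q_{k+1}(S_{k,w}(\predicted{x}{k}))$ and then $\tilde Q_{k+1}(S_{k,w}(\thisx))$, bounds the resulting pieces with the Lipschitz properties of $\tilde Q_{k+1}$ and $S_{k,w}$, combines this with \eqref{eq:inner-adjoint:adjoint-contr}, and divides by $L_{\tilde Q}$. Your side remarks are also accurate: the paper's own derivation uses $\norm{\freevar}_{k,\circ}$ despite the $X_k$ in the statement, and matching \cref{ass:tracking:main}\,\cref{item:tracking:main:adjoint-tracking} does require $\bar\kappa_w > L_{\tilde Q}$, which the lemma leaves implicit.
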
\begin{proof}
    Using the definition of $\predicted{w}{k+1}$, we get
    \[
        \predicted{w}{k+1} - S_{k+1,w}(\predicted{x}{k+1}) = [\tilde Q_{k+1}(w^k) - \tilde Q_{k+1}(S_{k,w}(\predicted{x}{k}))] + [\Tilde{Q}_{k+1}(S_{k,w}(\predicted{x}{k})) - S_{k+1,w}(\predicted{x}{k+1})].
    \]
    Using that $\tilde Q_{k+1}$ is $L_{\tilde Q}$-Lipschitz, we obtain
    \begin{equation}
        \label{eq:tracking:adjoint-simple-contractive:2}
        \begin{split}
            \norm{\predicted{w}{k+1} - S_{k+1,w}(\predicted{x}{k+1})}_{W_{k+1}}
             &
            \leq
            \norm{\Tilde{Q}_{k+1}(w^k) - \Tilde{Q}_{k+1}(S_{k,w}(\predicted{x}{k}))}_{W_{k+1}}
            \\
            \MoveEqLeft[-1]
            + \norm{\Tilde{Q}_{k+1}(S_{k,w}(\predicted{x}{k})) - S_{k+1,w}(\predicted{x}{k+1})}_{W_{k+1}}
            \\
             &
            \leq
            L_{\tilde Q} \norm{w^k - S_{k,w}(\predicted{x}{k})}_{W_k}
            + \norm{\Tilde{Q}_{k+1}(S_{k,w}(\predicted{x}{k})) - S_{k+1,w}(\predicted{x}{k+1})}_{W_{k+1}}
            \\
             &
            \le
            L_{\tilde Q} \norm{w^k - S_{k,w}(\predicted{x}{k})}_{W_k}
            +
            \norm{\Tilde{Q}_{k+1}(S_{k,w}(\predicted{x}{k})) - \Tilde{Q}_{k+1}(S_{k,w}(\thisx))}_{W_{k+1}}
            \\
            \MoveEqLeft[-1]
            + \norm{\Tilde{Q}_{k+1}(S_{k,w}(\thisx)) - S_{k+1,w}(\predicted{x}{k+1})}_{W_{k+1}}
            \\
             &
            \le
            L_{\tilde Q} \norm{w^k - S_{k,w}(\predicted{x}{k})}_{W_k}
            +
            L_{\tilde Q}L_{S_w}\norm{\predicted{x}{k} - \thisx}_{k,\circ}
            \\
            \MoveEqLeft[-1]
            + \norm{\Tilde{Q}_{k+1}(S_{k,w}(\thisx)) - S_{k+1,w}(\predicted{x}{k+1})}_{W_{k+1}},
        \end{split}
    \end{equation}
    where the third inequality follows by adding and subtracting the term $\Tilde{Q}_{k+1}(S_{k,w}(\thisx))$, the fourth inequality follows from the Lipschitz property of $\Tilde{Q}_{k+1}$ and $S_{k,w}$.
    Combining \cref{eq:inner-adjoint:adjoint-contr,eq:tracking:adjoint-simple-contractive:2} we obtain the claim.
\end{proof}\begin{remark}[Remainder term]
    \label{rem:tracking:sol-map-data-error-adjoint}
    Similarly to \cref{rem:iner-adjoint:sol-map-data-error}, the error term $\adjointError{k}$ is a commutativity error between the adjoint solution mappings and the predictions of the parameter and the solutions.
    For \cref{cor:pd:main}, we want this term to be at least bounded, preferrably with a bounded sum over $k \in \N$.
    For identity predictions when $X_{k+1}=X_k$ and $U_{k+1}=U_k$,
    $
        \adjointError{k}
        =
        \norm{S_{k,w}(\thisx) - S_{k+1,w}(\thisx)}_{W_{k+1}},
    $
    so the term depends on the continuity properties of the solution mapping with respect to the embedded data, encoded in the index $k$.
    More generally, for a conservative estimate when $X_k=X_{k+1}$ and $U_k=U_{k+1}$, if $S_{k,w}$ is $L_{S,w}$-Lipschitz for each $k$, we have
    \[
        \begin{split}
            \adjointError{k}
             &
            =
            \inv L_{\tilde Q}\norm{[\Tilde Q_{k+1}-\Id](S_{k,w}(\thisx))+S_{k,w}(\thisx) - S_{k+1,w}(P_{k+1}(\thisx))}_{W_{k+1}}
            \\
             &
            \le
            \inv L_{\tilde Q}\norm{[\Tilde Q_{k+1}-\Id](S_{k,w}(\thisx))}_{W_{k+1}}
            +\inv L_{\tilde Q}\norm{S_{k,w}(\thisx) - S_{k+1,w}(\thisx)}_{W_{k+1}}
            \\
            \MoveEqLeft[-1]
            +\inv L_{\tilde Q}L_{S,w}\norm{(P_{k+1}-\Id)(\thisx)}_{k+1,\circ}.
        \end{split}
    \]
    The first and last term can be controlled and even made small if $\Tilde{Q}_{k+1} \approx \Id$, $P_{k+1} \approx \Id$, and the solution mappings and iterates are bounded.
    Regarding the middle term, for parametrised linear systems $T_k(x,u) = A_{k,x}u-b_{k,x}$, we have $S_{k,w}(x) =  (A_{k,x}^*)^{-1} J'_k(S_{k,u}(x))$, so can further estimate
    \[
        \begin{split}
             & \norm{S_{k,w}(\thisx) - S_{k+1,w}(\thisx)}
            =
            \norm{(A_{k,x}^*)^{-1} J'_k(S_{k,u}(x)) -(A_{k+1,x}^*)^{-1} J'_k(S_{k+1,u}(x))}
            \\
             &
            =
            \norm{((A_{k,x}^*)^{-1}-(A_{k+1,x}^*)^{-1})J'_k(S_{k,u}(x))-(A_{k+1,x}^*)^{-1   } \left(J'_k(S_{k+1,u}(x)) - J'_k(S_{k,u}(x)) \right)}
            \\
             &
            \le
            \norm{(A_{k,x}^*)^{-1} - (A_{k+1,x}^*)^{-1}}\norm{J'_k(S_{k,u}(x))}
            +\norm{(A_{k+1,x}^*)^{-1}}\norm{J'_k(S_{k+1,u}(x)) - J'_k(S_{k,u}(x))}.
        \end{split}
    \]
    This can, again, be controlled if the dependence of $(A_{k,x}^*)^{-1}$ and $J'_k(S_{k,u}(x))$ on the data (encoded in the index $k$) is continuous.
\end{remark}

\subsection{The differential transformation}
\label{sec:inner-adjoint:differential-transformation}

Finally, we provide conditions for the differential transformation component of \cref{ass:tracking:main},\cref{item:tracking:main:differential-transformation} to hold for the reduced adjoint.
As before, we omit subscripts in the norm notation and understand $\norm{.}$ as the norm of the respective underlying space.\begin{lemma}[Differential transformation: reduced adjoint]
    \label{lemma:tracking:diff-transformation:reduced}
    Continuing from \cref{ex:tracking:pde-adjoint}, take $S_{k,w}(x)=w$ as a solution of \eqref{eq:tracking:reduced-adjoint}.
    Suppose $\diffwrt{T_k}{x}(\freevar, x)$ is $L_{\diffwrt{T}{x}; u}$-Lipshitz for all $x \in \localset_k$ and $k \in \N$; that
    \begin{gather*}
        \sup_{k, \in \N,\, u \in U_k,\, x \in \localset_k} \norm{\diffwrt{T_k}{x}(u, x)}=: M_{\diffwrt{T}{x}} < \infty
        \shortintertext{and that the adjoint solution mappings are bounded on $\localset_k$, i.e.,}
        N_{S_{k,w}} \defeq \sup_{k \in \N,\, x \in \localset_k} \norm{S_{k,w}(x)} < \infty.
    \end{gather*}
    Also let
    \[
        \nextestdiff \defeq \nexxt w \diffwrt{T_{k+1}}{x}(\nexxt u, \predicted{x}{k+1}).
    \]
    Then the differential transformation \cref{ass:tracking:main}\,\cref{item:tracking:main:differential-transformation} holds
    with $\alpha_u =  N_{S_{k,w}} L_{\diffwrt{T}{x}; u}$, $\alpha_w =  M_{\diffwrt{T}{x}}$, and $\transformError{k}=0$.
\end{lemma}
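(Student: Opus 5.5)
The plan is to write the exact differential using \cref{ex:tracking:pde-adjoint} and compare it with the estimate by adding and subtracting a mixed term. By \cref{ex:tracking:pde-adjoint}, with $\hat u \defeq S_{k+1,u}(\predicted{x}{k+1})$ and $\hat w \defeq S_{k+1,w}(\predicted{x}{k+1})$, we have $\nextdiff = \hat w\, \diffwrt{T_{k+1}}{x}(\hat u, \predicted{x}{k+1})$. I will then split
\[
    \nextestdiff - \nextdiff
    =
    (\nexxt w - \hat w)\, \diffwrt{T_{k+1}}{x}(\nexxt u, \predicted{x}{k+1})
    + \hat w\,\bigl[\diffwrt{T_{k+1}}{x}(\nexxt u, \predicted{x}{k+1}) - \diffwrt{T_{k+1}}{x}(\hat u, \predicted{x}{k+1})\bigr].
\]
The order of adding and subtracting matters: the bounded operator $\diffwrt{T}{x}$ must multiply the adjoint error, and the bounded adjoint solution $\hat w$ must multiply the Lipschitz difference. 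This is what produces exactly the constants $\alpha_w = M_{\diffwrt{T}{x}}$ and $\alpha_u = N_{S_{k,w}} L_{\diffwrt{T}{x}; u}$.

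Next I will estimate each term in operator norm using the triangle inequality and submultiplicativity, viewing $w \in W_{k+1}\hookrightarrow W_{k+1}^{**}$ acting on $\diffwrt{T_{k+1}}{x}(\cdot)\in\linear(X_{k+1};W_{k+1}^*)$.
\begin{itemize}
    \item The first term is bounded by $M_{\diffwrt{T}{x}}\norm{\nexxt w-\hat w}_{W_{k+1}}$. This uses the uniform supremum over $u\in U_{k+1}$ and $\predicted{x}{k+1}\in\localset_{k+1}$.
    \item The second term is bounded by $N_{S_{k,w}}L_{\diffwrt{T}{x};u}\norm{\nexxt u-\hat u}_{U_{k+1}}$, using $\predicted{x}{k+1}\in\localset_{k+1}$ again.
\end{itemize}
Since $d_{U_{k+1}}$ and $d_{W_{k+1}}$ are the norm distances, this gives the bound of \cref{ass:tracking:main}\,\cref{item:tracking:main:differential-transformation} with $\transformError{k}=0$.

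The main obstacle is relating the operator norm, taken with respect to $\norm{\cdot}_{X_{k+1}}$, to the support function $\norm{\cdot}_{k+1,*}$ required in the assumption. I will interpret the Lipschitz and boundedness hypotheses as holding with respect to $\norm{\cdot}_{k+1,*}$, that is, as operator norms of $\diffwrt{T}{x}$ measured from $\norm{\cdot}_{k+1,\circ}$. Alternatively, I can note that $\norm{x^*}_{k+1,*}\le \norm{x^*}_{X^*_{k+1}}$ whenever $\norm{\cdot}_{X_{k+1}}\le\norm{\cdot}_{k+1,\circ}$. In either case the estimate is taken pointwise in a direction $h$ with $\norm{h}_{k+1,\circ}\le1$, followed by the supremum over such $h$, so the argument goes through unchanged.
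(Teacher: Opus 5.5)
Your proposal is correct and is essentially the paper's proof: the paper uses the same add-and-subtract of $\hat w^{k+1}\,T_{k+1}^{(x)}(u^{k+1},\breve x^{k+1})$, with the bound $M_{T^{(x)}}$ applied to the adjoint error and the bound $N_{S_{k,w}}$ on $\norm{\hat w^{k+1}}$ applied to the Lipschitz difference, exactly as you propose. The only difference is that the paper never raises the operator-norm versus $\norm{\cdot}_{k+1,*}$ issue and simply declares all norms to be those of the underlying spaces, so your remark on that point is a harmless and slightly more careful addition.
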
\begin{proof}
    Write $\nexxt{\hat w} \defeq S_{k+1,w}(\predicted{x}{k+1})$.
    Since
    \[
        E_{k+1}'(\predicted{x}{k+1}) = J_{k+1}'(S_{k+1,u}(\predicted{x}{k+1}) )S_{k+1,u}'(\predicted{x}{k+1}) = \nexxt {\hat w}\diffwrt{T_{k+1}}{x}(S_{k+1,u}(\predicted{x}{k+1}), \predicted{x}{k+1} ),
    \]
    it follows
    \[
        \begin{aligned}[t]
            \norm{\nextestdiff & - E_{k+1}'(\predicted{x}{k+1})}
            =
            \norm{\nexxt w \diffwrt{T_{k+1}}{x}(\nexxt u, \predicted{x}{k+1}) - \nexxt {\hat w}\diffwrt{T_{k+1}}{x}(S_{k+1,u}(\predicted{x}{k+1}), \predicted{x}{k+1} )}%
            \\
             &
            = \norm{[\nexxt w -\nexxt{\hat w}]\diffwrt{T_{k+1}}{x}(\nexxt u, \predicted{x}{k+1}) - \nexxt {\hat w}[\diffwrt{T_{k+1}}{x}(S_{k+1,u}(\predicted{x}{k+1}), \predicted{x}{k+1} )-\diffwrt{T_{k+1}}{x}(\nexxt u, \predicted{x}{k+1}) ]}
            \\
             &
            \le
            \norm{\diffwrt{T_{k+1}}{x}(\nexxt u, \predicted{x}{k+1})}%
            \norm{\nexxt w -\nexxt{\hat w}}%
            \\
            \MoveEqLeft[-1]
            + \norm{\nexxt {\hat w}}%
            \norm{\diffwrt{T_{k+1}}{x}(S_{k+1,u}(\predicted{x}{k+1}), \predicted{x}{k+1} )-\diffwrt{T_{k+1}}{x}(\nexxt u, \predicted{x}{k+1})}%
        \end{aligned}
    \]
    Therefore, recalling that $S_{k+1,w}(\predicted{x}{k+1})=\nexxt{\hat w}$, we prove \cref{ass:tracking:main}\,\cref{item:tracking:main:differential-transformation}:
    \[
        \norm{\nextestdiff - E_{k+1}'(\predicted{x}{k+1})}%
        \le
        N_{S_{k+1,w}} L_{\diffwrt{T}{x}; u}\norm{\nextu- S_{k+1,u}(\predicted{x}{k+1})}%
        + M_{\diffwrt{T}{x}} \norm{\nexxt w - S_{k+1,w}(\predicted{x}{k+1})}.
        \qedhere
    \]
\end{proof}

\subsection{Linear system splitting}
\label{sec:inner-adjoint:linear}

We now cover specific algorithms for PDEs as the inner problems.
For $U_k, X_k$, and $W_k^*$ normed spaces, let both $A_{k,x} \in \linear(U_k; W_k^*)$, modelling a linear PDE parametrised $x$, and the right hand side $b_{k,x} \in W_k^*$ be Lipschitz in $x \in X_k$.
Consider the inner constraint of $u=S_{k,u}(x)$ satisfying
\begin{equation}
    \label{eq:tracking:linear-inner}
    A_{k,x} u = b_{k,x}.
\end{equation}
This is again an instance of  \eqref{eq:tracking:implicit-inner} when we set
\[
    T_{k}(u, x) = A_{k,x} u - b_{k,x}.
\]
Two solve \eqref{eq:tracking:linear-inner} inexactly and efficiently, we split $A_{k,x}$ into two components, as in standard Gauss–Seidel or Jacobi splittings.

\begin{assumption}[Admissible splitting]
    \label{ass:primal-admissible-splitting}
    Let $X_k$, $U_k$, and $W_k^*$ be normed spaces.%
    For all $k \ge 0$ and $x^k \in \Omega \subset X_k$,
    let $A_{k,x^k} \in \linear(U_k; W_k^*)$. We assume to be given splittings $ A_{k,\predicted{x}{k}}=N_{k,\predicted{x}{k}}+M_{k,\predicted{x}{k}}$, where $N_{k,\predicted{x}{k}}$ is invertible and
    \[
        \zeta_u \norm{\inv N_{k,\predicted{x}{k}} M_{k,\predicted{x}{k}}} \le 1
    \]
    for some $\zeta_u > 1$.
\end{assumption}

\begin{assumption}[Adjoint admissible splitting]
    \label{ass:adjoint-admissible-splitting}
    Let
    $X_k$, $U_k$, and $W_k^*$ be normed spaces.%
    For all $k \ge 0$ and $x^k \in \Omega \subset X_k$,
    let $A_{u, k,\predicted{x}{k}} \in \linear(W_k;U_k^*)$. We assume to be given splittings $ A_{u, k,\predicted{x}{k}}=N_{u, k,\predicted{x}{k}}+M_{u, k,\predicted{x}{k}}$, where $N_{k,\predicted{x}{k}}$ is invertible and
    \[
        \zeta_w \norm{\inv N_{u, k,\predicted{x}{k}} M_{u, k,\predicted{x}{k}}} \le 1
    \]
    for some $ \zeta_w > 1$, and that $\norm{N^{-1}_{u, k,\predicted{x}{k}}} \le \gamma$ for some $\gamma \ge 0$.
\end{assumption}

\begin{example}[{Gauss--Seidel splitting \cite[Example 4.4]{suonpera2024general}}]
    \label{ex:splitting:Gauss–Seidel}
    Let $A_{k,x^k} \in \R^{n \times n}$, and take $N_{k,x^k}$ as the upper triangle and diagonal of $A_{k,x^k}$.
    We have $\norm{\inv N_{k,x^k} M_{k,x^k}} \leq \zeta$ for some $\zeta \in [0,1)$ when $A_{k,x^k}$ is either strictly diagonally dominant or symmetric and positive definite \cite[§10.1]{golub1996matrix}, thus \cref{ass:primal-admissible-splitting} holds.
    We also have $\norm{\inv N_{k,x^k}} \leq \gamma$ for some $\gamma \geq 0$ when $N_{k,x^k}$ is invertible, i.e., has no zero on the main diagonal.
\end{example}

\begin{theorem}
    \label{thm:tracking:inner-linear-system-splitting}
    The following hold:
    \begin{enumerate}[label=(\roman*)]
        \item\label{item:tracking:inner-linear-system-splitting:inner}
              Suppose \cref{ass:primal-admissible-splitting} holds for $A_{k,\predicted{x}{k}} = N_{k,\predicted{x}{k}} + M_{k,\predicted{x}{k}}$, and that $S_{k,u}(x)=\inv A_{k,x} b_{k,x}$ is $L_{S_u}$-Lipschitz in $\Omega$ for all $k\geq 0$. Assume that $Q_k$ is $L_Q$-Lipschitz for all $k\geq 0$. Then, the inner updates
              \[
                  \nextu = \inv N_{k+1,\predicted{x}{k+1}}(b_{k+1, \predicted{x}{k+1}} - M_{k+1,\predicted{x}{k+1}} \predicted{u}{k+1})
              \]
              satisfy the contractivity condition \eqref{eq:inner-adjoint:inner-contr}. In particular,
              \cref{ass:tracking:main} \cref{item:tracking:main:inner-tracking} holds, i.e.,%
              \begin{gather*}
                  \begin{split}
                      \inv L_Q\zeta_u \norm{u^{k+1} - S_{k+1,u}(\predicted{x}{k+1})}
                       &
                      \leq
                      \norm{u^k - S_{k,u}(\predicted{x}{k})} + L_{S_u} \norm{\predicted{x}{k} - \thisx}+ \innerError{k}
                  \end{split}
                  \shortintertext{for}
                  \innerError{k} \defeq \inv L_Q \norm{Q_k(S_{k,u}(\thisx)) - S_{k+1,u}(\predicted{x}{k+1})}_{U_{k+1}}.
              \end{gather*}
        \item\label{item:tracking:inner-linear-system-splitting:adjoint}
              Suppose \cref{ass:adjoint-admissible-splitting} holds for $A_{k,x}^* = N_{*, k,\predicted{x}{k}} + M_{*, k,\predicted{x}{k}}$. Assume that, for all $k\geq 0$, $\Tilde Q_k$ is $L_{\tilde Q}$-Lipschitz, $S_{k,w}$ is $L_{S_w}$-Lipschitz, and $J_{k+1}^{'}$ is $L_{J}$-Lipschitz.
              Then, the adoint updates
              \[
                  w^{k+1} = - N^{-1}_{*, k,\predicted{x}{k+1}} \left(J_{k+1}^{'}(S_{k+1,u}(\predicted{x}{k+1})) + M_{*, k+1,\predicted{w}{k+1}}\predicted{w}{k+1}\right)
              \]
              satisfy the contractivity condition \cref{eq:inner-adjoint:adjoint-contr}.
              In particular, \cref{ass:tracking:main} \cref{item:tracking:main:adjoint-tracking} holds, i.e.,%
              \begin{gather*}
                  \begin{split}
                      \frac{1}{L_{\tilde Q} \gamma L_{J}} \norm{w^{k+1} - S_{k+1,w}(\predicted{x}{k+1})}_{W_{k+1}}
                       &
                      \le
                      \norm{w^k - S_{k,w}(\predicted{x}{k})}_{W_k} + \frac{1}{L_{\tilde Q} L_J \gamma \zeta_w} \norm{u^{k+1} - S_{k+1,u}(\predicted{x}{k+1})}_{U_{k+1}}
                      \\
                      \MoveEqLeft[-1]
                      + L_{S_w}\norm{\predicted{x}{k} - \thisx}_{X_k}  + \adjointError{k}
                  \end{split}
                  \shortintertext{for}
                  \adjointError{k} = \inv L_{\tilde Q} \norm{\Tilde{Q}_k(S_{k,w}(\thisx)) - S_{k+1,w}(\predicted{x}{k+1})}_{W_{k+1}}.
              \end{gather*}
        \item\label{item:tracking:inner-linear-system-splitting:dt}
              Assume that $u \mapsto  A_{k+1,\predicted{x}{k+1}}^{(x)}u - b_{k+1,\predicted{x}{k+1}}^{(x)}$ is $L_{T,u}$-Lipschitz, and $\norm{A_{k+1,\predicted{x}{k+1}}^{(x)}u^{k+1} - b_{k+1,\predicted{x}{k+1}}^{(x)}} \leq M_{T^{(x)}}$. Then, \cref{ass:tracking:main} \cref{item:tracking:main:differential-transformation} holds:
              \[
                  \norm{\nextestdiff - E_{k+1}'(\predicted{x}{k+1})}%
                  \le
                  M_{T^{(x)}}\norm{\nexxt w - S_{k+1,w}(\predicted{x}{k+1})} +
                  L_{T,u} \norm{\nexxt {\hat w}} \norm{\nexxt u - S_{k+1,u}(\predicted{x}{k+1})}.
              \]
    \end{enumerate}
\end{theorem}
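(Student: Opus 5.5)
Each of the three items reduces to an earlier result. Items (i) and (ii) follow by verifying the basic contractivity conditions \eqref{eq:inner-adjoint:inner-contr} and \eqref{eq:inner-adjoint:adjoint-contr} and then invoking \cref{lemma:inner-adjoint:inner-simple-contractive,lemma:inner-adjoint:adjoint-simple-contractive}. Item (iii) follows by repeating the argument of \cref{lemma:tracking:diff-transformation:reduced} with $T_{k+1}(u,x)=A_{k+1,x}u-b_{k+1,x}$.

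For \ref{item:tracking:inner-linear-system-splitting:inner}, write $\hat u \defeq S_{k+1,u}(\predicted{x}{k+1})$. Since $A\hat u = b$ with $A=N+M$ (all at index $k+1$, $\predicted{x}{k+1}$), we have the fixed-point identity $\hat u = \inv N(b - M\hat u)$. Subtracting it from the update gives
\[
    \nextu-\hat u = -\inv N M(\predicted{u}{k+1}-\hat u),
\]
hence $\norm{\nextu-\hat u}\le \norm{\inv N M}\,\norm{\predicted{u}{k+1}-\hat u}$. \Cref{ass:primal-admissible-splitting} then gives \eqref{eq:inner-adjoint:inner-contr} with $\bar\kappa_u=\zeta_u$. \Cref{lemma:inner-adjoint:inner-simple-contractive} now yields the stated inequality with $\kappa_u=\zeta_u/L_Q$, up to the indexing of the predictor in $\innerError{k}$. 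That result needs $\kappa_u>1$, i.e.\ $\zeta_u>L_Q$, which I would note as an implicit requirement.

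For \ref{item:tracking:inner-linear-system-splitting:adjoint}, the adjoint solution $\hat w\defeq S_{k+1,w}(\predicted{x}{k+1})$ satisfies $A^*\hat w = -J_{k+1}'(S_{k+1,u}(\predicted{x}{k+1}))$. This gives the fixed-point form $\hat w = -\inv N(J'(\hat u)+M\hat w)$ with $N=N_{*,k+1,\predicted{x}{k+1}}$. The difference with the update then splits into a contraction part $-\inv N M(\predicted{w}{k+1}-\hat w)$ and a source part $-\inv N(J'(\cdot)-J'(\hat u))$, where the update's argument of $J'$ is the iterate state $\nextu$ actually used in computation. The contraction part has norm at most $\zeta_w^{-1}\norm{\predicted{w}{k+1}-\hat w}$. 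The source part has norm at most $\gamma L_J\norm{\nextu-\hat u}$, using $\norm{\inv N}\le\gamma$ and the Lipschitz property of $J'$. Multiplying by $\zeta_w$ gives \eqref{eq:inner-adjoint:adjoint-contr} with $\bar\kappa_w=\zeta_w$ and $L_w=\zeta_w\gamma L_J$. Then \cref{lemma:inner-adjoint:adjoint-simple-contractive} gives the claim, after rescaling the constants to match the displayed normalisation.

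\textbf{Main obstacle.} The hardest step is this bookkeeping of constants and indices. The update as written evaluates $J'$ at the exact state $S_{k+1,u}(\predicted{x}{k+1})$, which would make $L_w=0$. I would interpret it as being evaluated at $\nextu$, which is what produces the $u$-term, and then match the stated constant normalisation. I would rescale the constants as needed rather than insist on the exact printed ones.

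For \ref{item:tracking:inner-linear-system-splitting:dt}, the $x$-derivative is $T^{(x)}(u,x)=A^{(x)}_{x}u-b^{(x)}_{x}$. \Cref{ex:tracking:pde-adjoint} gives $E'_{k+1}(\predicted{x}{k+1})=\hat w\,T^{(x)}(\hat u,\predicted{x}{k+1})$, and the estimate is $\nextestdiff=\nexxt w\,T^{(x)}(\nextu,\predicted{x}{k+1})$. Adding and subtracting $\hat w\,T^{(x)}(\nextu,\predicted{x}{k+1})$ and applying the triangle inequality gives
\[
    \norm{\nextestdiff - E'_{k+1}(\predicted{x}{k+1})}
    \le \norm{T^{(x)}(\nextu,\cdot)}\,\norm{\nexxt w-\hat w}
    + \norm{\hat w}\,\norm{T^{(x)}(\nextu,\cdot)-T^{(x)}(\hat u,\cdot)}.
\]
The bound $M_{T^{(x)}}$ controls the first factor and $L_{T,u}$ controls the last, which gives the claim with $\transformError{k}=0$.
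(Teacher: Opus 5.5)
\textbf{Where your proof matches the paper.} Your argument follows the paper's proof step for step. For (i) it uses the error identity $\nextu - S_{k+1,u}(\predicted{x}{k+1}) = -\inv N M\bigl(\predicted{u}{k+1} - S_{k+1,u}(\predicted{x}{k+1})\bigr)$ followed by \cref{lemma:inner-adjoint:inner-simple-contractive}. For (ii) it splits the error into a contraction part and a $J'$-difference part and then applies \cref{lemma:inner-adjoint:adjoint-simple-contractive}. For (iii) it adds and subtracts $\nexxt{\hat w}\,\diffwrt{T_{k+1}}{x}(\nextu,\predicted{x}{k+1})$. Your side remarks are correct: the $Q_k$ versus $Q_{k+1}$ index in $\innerError{k}$ is a typo, and $\zeta_u > L_Q$ is implicitly needed.

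\textbf{The step that fails: matching the printed constants in (ii).} The paper reaches the same intermediate bound as you,
\[
    \norm{w^{k+1} - S_{k+1,w}(\predicted{x}{k+1})}
    \le
    \gamma L_J \norm{u^{k+1} - S_{k+1,u}(\predicted{x}{k+1})}
    + \zeta_w^{-1}\norm{\predicted{w}{k+1} - S_{k+1,w}(\predicted{x}{k+1})}.
\]
It gets there by putting $J'_{k+1}(u^{k+1})$ into the adjoint equation for $S_{k+1,w}(\predicted{x}{k+1})$ rather than into the update. That is the reverse of your allocation, and yours is the coherent one, since $S_{k+1,w}$ depends on $\predicted{x}{k+1}$ only. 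The paper then reads off $\bar\kappa_w = 1/(L_J\gamma)$ and $L_w = 1/(L_J\gamma\zeta_w)$. This amounts to dividing by $\gamma L_J$ and attaching the unit coefficient to the wrong right-hand term, and it is where the printed constants come from.

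Those printed constants are not a rescaling of your $\bar\kappa_w=\zeta_w$, $L_w=\zeta_w\gamma L_J$, and the printed inequality is false in general. Take static data, identity predictors, $\predicted{x}{k}=x^k=\predicted{x}{k+1}$ and $u^{k+1}$ exact. Then the printed inequality reads $\norm{w^{k+1}-S_{k+1,w}(\predicted{x}{k+1})} \le \gamma L_J\norm{w^k - S_{k,w}(\predicted{x}{k})}$. That is an adjoint contraction factor $\gamma L_J$ unrelated to the splitting. The true factor is $\norm{\inv N M}$, which may equal $1/\zeta_w > \gamma L_J$ (let $L_J \to 0$).

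\textbf{What to state instead.} Do not try to match the display. State (ii) as what \cref{lemma:inner-adjoint:adjoint-simple-contractive} gives with your constants:
\[
    \frac{\zeta_w}{L_{\tilde Q}}\norm{w^{k+1} - S_{k+1,w}(\predicted{x}{k+1})}
    \le
    \norm{w^k - S_{k,w}(\predicted{x}{k})}
    + \frac{\zeta_w\gamma L_J}{L_{\tilde Q}}\norm{u^{k+1} - S_{k+1,u}(\predicted{x}{k+1})}
    + L_{S_w}\norm{\predicted{x}{k} - x^k}
    + \adjointError{k}.
\]
This is \cref{ass:tracking:main}\,\cref{item:tracking:main:adjoint-tracking} with $\kappa_w=\zeta_w/L_{\tilde Q}$, $\mu_u = \zeta_w\gamma L_J/L_{\tilde Q}$ and $\pi_w = L_{S_w}$. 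As in (i), it needs the additional condition $\zeta_w > L_{\tilde Q}$.
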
\begin{proof}
    \cref{item:tracking:inner-linear-system-splitting:inner}:
    We have
    \[
        \begin{split}
            \nextu - S_{k+1,u}(\predicted{x}{k+1})
             & =\inv N_{k+1,\predicted{x}{k+1}}(b_{k+1, \predicted{x}{k+1}} - M_{k+1,\predicted{x}{k+1}} \predicted{u}{k+1})
            \\
             & \qquad - \inv N_{k+1,\predicted{x}{k+1}}(b_{k+1, \predicted{x}{k+1}} - M_{k+1,\predicted{x}{k+1}}S_{k+1,u}(\predicted{x}{k+1}))
            \\
             & =-\inv N_{k+1,\predicted{x}{k+1}}M_{k+1,\predicted{x}{k+1}}(\predicted{u}{k+1} - S_{k+1,u}(\predicted{x}{k+1})).
        \end{split}
    \]
    Hence,
    \[
        \zeta_u \norm{\nextu - S_{k+1,u}(\predicted{x}{k+1})}
        \le
        \zeta_u \norm{\inv N_{k+1,\predicted{x}{k+1}}M_{k+1,\predicted{x}{k+1}}}\norm{\predicted{u}{k+1} - S_{k+1,u}(\predicted{x}{k+1})}
        \le
        \norm{\predicted{u}{k+1} - S_{k+1,u}(\predicted{x}{k+1})}.
    \]
    Thus, using \cref{lemma:inner-adjoint:inner-simple-contractive},  \cref{ass:tracking:main}\,\cref{item:tracking:main:inner-tracking} holds.

    \cref{item:tracking:inner-linear-system-splitting:adjoint}:
    For brevity, set $v_k = (*,k,\predicted{x}{k})$. Hence, using the definition of $w^{k+1}$, and the fact that
    \[
        (M_{v_{k+1}} + N_{v_{k+1}}) S_{k+1,w}(\predicted{x}{k+1}) = -J_{k+1}' (u^{k+1}),
    \]
    we obtain
    \begin{align*}
        w^{k+1} - S_{k+1,w}(\predicted{x}{k+1}) & = -N^{-1}_{v_{k+1}}\left(J_{k+1}'(S_{k+1,u}(\predicted{x}{k+1})) + M_{v_{k+1}}\predicted{w}{k+1}\right) - S_{k+1,w}(\predicted{x}{k+1})
        \\
                                                & = -N^{-1}_{v_{k+1}}\left(J_{k+1}' (S_{k+1,u}(\predicted{x}{k+1}))  -  J'_{k+1}(u^{k+1})\right) -N^{-1}_{v_{k+1}}M_{v_{k+1}}\predicted{w}{k+1}
        \\
                                                & \qquad  - N^{-1}_{v_{k+1}}J'_{k+1}(u^{k+1}) - S_{k+1,w}(\predicted{x}{k+1})
        \\
                                                & = -N^{-1}_{v_{k+1}}\left(J_{k+1}' (S_{k+1,u}(\predicted{x}{k+1}))  -  J'_{k+1}(u^{k+1})\right) -N^{-1}_{v_{k+1}}M_{v_{k+1}}\predicted{w}{k+1}
        \\
                                                & \qquad  + N^{-1}_{v_{k+1}}\left((M_{v_{k+1}} + N_{v_{k+1}})S_{k+1,w}(\predicted{x}{k+1})\right)- S_{k+1,w}(\predicted{x}{k+1})
        \\
                                                & = -N^{-1}_{v_{k+1}}\left(J_{k+1}' (S_{k+1,u}(\predicted{x}{k+1}))  -  J'_{k+1}(u^{k+1})\right) + N^{-1}_{v_{k+1}}M_{v_{k+1}}\left( S_{k+1,u}(\predicted{x}{k+1}) - \predicted{w}{k+1}\right).
    \end{align*}
    Thus, using the Lipschitz properties of $J_{k+1}'$ and \cref{ass:adjoint-admissible-splitting}, we obtain
    \begin{align*}
        \norm{w^{k+1} - S_{k+1,w}(\predicted{x}{k+1})}_{W_{k+1}}
         & \leq
        \gamma L_J \norm{u^{k+1} - S_{k+1,u}(\predicted{x}{k+1})}_{U_{k+1}} + \frac{1}{\zeta_w} \norm{\predicted{w}{k+1} - S_{k+1,w}(\predicted{x}{k+1})}_{W_{k+1}}.
    \end{align*}
    This proves our contractivity condition \cref{eq:inner-adjoint:adjoint-contr} with $\bar \kappa_w = \frac{1}{L_J \gamma}$ and $L_w = \frac{1}{L_J \gamma \zeta_w}$.
    Thus, using \cref{lemma:inner-adjoint:adjoint-simple-contractive}, \cref{ass:tracking:main}\,\cref{item:tracking:main:adjoint-tracking} holds.

    \cref{item:tracking:inner-linear-system-splitting:dt}: Finally, we have
    \[
        \begin{aligned}[t]
             & \norm{\nextestdiff - E_{k+1}'(\predicted{x}{k+1})}
            =
            \norm{\nexxt w \diffwrt{T_{k+1}}{x}(\nexxt u, \predicted{x}{k+1}) -  S_{k+1,w}(\predicted{x}{k+1})\diffwrt{T_{k+1}}{x}(S_{k+1,u}(\predicted{x}{k+1}), \predicted{x}{k+1} )}%
            \\
             &
            =
            \norm{[\nexxt w - S_{k+1,w}(\predicted{x}{k+1})]\diffwrt{T_{k+1}}{x}(\nexxt u, \predicted{x}{k+1}) - \nexxt {\hat w}[\diffwrt{T_{k+1}}{x}(S_{k+1,u}(\predicted{x}{k+1}), \predicted{x}{k+1} )-\diffwrt{T_{k+1}}{x}(\nexxt u, \predicted{x}{k+1}) ]}%
            \\
             &
            \le
            \norm{\diffwrt{T_{k+1}}{x}(\nexxt u, \predicted{x}{k+1})}%
            \norm{\nexxt w - S_{k+1,w}(\predicted{x}{k+1})}
            \\
            \MoveEqLeft[-1]
            + \norm{\nexxt {\hat w}}
            \norm{\diffwrt{T_{k+1}}{x}(S_{k+1,u}(\predicted{x}{k+1}), \predicted{x}{k+1} )-\diffwrt{T_{k+1}}{x}(\nexxt u, \predicted{x}{k+1})}
            \\
             &
            \le
            M_{T^{(x)}}\norm{\nexxt w - S_{k+1,w}(\predicted{x}{k+1})} + L_{T,u} \norm{\nexxt {\hat w}} \norm{\nexxt u - S_{k+1,u}(\predicted{x}{k+1})}.
        \end{aligned}
    \]
    Thus,  \cref{ass:tracking:main}\,\cref{item:tracking:main:differential-transformation} holds.
\end{proof}
\begin{remark}[Full error expression]
    \label{rem:inner-adjoint:error}
    In the present setting, we can bound the error
    \[
        e_N^\Sigma(u^{0:N-1},\optu^{0:N})
        =
        \sum_{k=0}^{N-1}\!\left(
        \epsilon_{k+1}^{\dagger}(u^k,\optu^{k:k+1})
        + \tau\Err_{k+1}
        \right).
    \]
    given in \cref{cor:pd:main}, where the predicgtion error $\epsilon_{k+1}^{\dagger}(u^k, \bar u^{k:{k+1}})$, defined in \cref{eq:pd:prediction-error}, depends on the predictors use, and is discussed in more detail in \cite{tuomov-better-predict,tuomov2024online-eit}.
    From \cref{thm:tracking:smoothness-verification}\,\cref{item:tracking:smoothness-verification:local}, the algorithm error satisfies $\Err_{k+1} = \frac{1}{2\tilde\gamma}\combinederror_k$, where $\combinederror_k$ is defined in \cref{lemma:tracking:smoothness-pre-result}. Moreover, $\combinederror_k$ can be bounded as
    \begin{align*}
         \frac{1}{2\tilde\gamma}\sum_{k=0}^{N}\combinederror_k
         &
        \defeq  \frac{1}{2\tilde\gamma}\sum_{k= 0}^{N} \left(\frac{5}{2}\trackingressum[1]^2 \left(\max\!\left( \frac{1}{\pi_u}\,\innerError{k}, \frac{1}{\pi_w}\,\adjointError{k} \right)\right)^2 + e_{1,k}\right)
        \\
         & \leq  \frac{1}{2\tilde\gamma}\sum_{k= 0}^{N} \left(\frac{5}{2}\trackingressum[1]^2 \left(\max\!\left( \frac{1}{\pi_u}\,\innerError{k}, \frac{1}{\pi_w}\,\adjointError{k} \right)\right)^2\right) +  \frac{1}{2\tilde\gamma} \Psi_1,
    \end{align*}
    where $\Psi_1$ and $\trackingressum[1]$ are bounded in \cref{eq:tracking:smoothness-pre-result:e-bound,eq:tracking:smoothness-pre-result:varrho-bound} based on the first-step errors and the coefficients $\alpha_u = L_{T,u}$, $\alpha_w = M_{T^{(x)}}$, $\pi_u = L_{S_u}$, and $\pi_w = L_{S_w}$.

    When $Q_k=\Id$ and $\tilde Q_k=\Id$, as in our numerical experiments of \cref{sec:numerical}, we obtain from \cref{rem:iner-adjoint:sol-map-data-error,rem:tracking:sol-map-data-error-adjoint} that
    \begin{align*}
        \innerError{k}
         &
        \leq
        \norm{(\inv A_{k,\thisx}-\inv A_{k+1,\thisx})}\norm{b_{k,\thisx}}
        +\norm{\inv A_{k+1,\thisx}}\norm{b_{k+1,\thisx}-b_{k,\thisx}}
        +L_{S,u}\norm{(P_k-\Id)(\thisx)},
        \\
        \adjointError{k}
         &
        \leq
        \norm{(A_{k,x}^*)^{-1} - (A_{k+1,x}^*)^{-1}}\norm{J'_k(\inv A_{k,x} b_{k,x})}
        \\
        \MoveEqLeft[-1]
        +\norm{(A_{k+1,x}^*)^{-1}}\norm{J'_k(\inv A_{k+1,x} b_{k+1,x}) - J'_k(\inv A_{k,x} b_{k,x})}
        +L_{S,w}\norm{(P_k-\Id)(\thisx)}.
    \end{align*}
    These errors can be further bounded through the specific apperance of the frame-dependent data in $A_{k,x}$, $b_{k,x}$, and $J_k$, as well as the difference of the primal predictor $P_k$ from the identity.
\end{remark}

\section{Numerical experiments}
\label{sec:numerical}

In this section, in the context of application of the online \cref{alg:pd:alg} to the dynamical electrical impedance tomography (EIT) problem, we compare the single-loop differential estimates of \cref{sec:online-tracking,sec:inner-adjoint} against the background solver of \cite{tuomov2024online-eit} for construting the gradient estimates $\thisestgrad$ for the EIT data term from \eqref{eq:eit:functionals} (\cref{sec:numerical:results}).

In the dynamical EIT problem, the goal is to reconstruct a time-varying conductivity distribution containing moving inclusions. The evolution of the inclusions is governed by a transport equation corresponding to incompressible flow with constant density. In addition to this nominal setting, we deliberately consider scenarios that violate the modelling assumptions in order to assess the robustness of the method.

The numerical experiments, including the EIT forward model, data generation, discretisation choices, and noise model, closely follow the setup introduced in \cite{tuomov2024online-eit}. We therefore only summarise the key ingredients here and refer the reader to that work for full details.
Our computations were performed on a 2020 MacBook Air M1 with 16GB memory.
Our Julia implementation of the experiments and algorithms is available on Zenodo \cite{jauhiainen2025online-eit-codes}.

\subsection{Test scenarios}
\label{sec:numerical:scenarios}

All experiments are performed on synthetic data in a disk-shaped domain $\Omega$ equipped with $\Nelec = 16$ equally spaced boundary electrodes.
The data fidelity term $E_k$ defined in \eqref{eq:eit:functionals} is evaluated using a finite element approximation of the electrode potentials and the conductivity, both represented with piecewise linear basis functions.
To avoid the inverse crime \cite{kaipio2000statistical}, the mesh used for the inverse problem is coarser (2917 nodes) than that used for data generation (5039 nodes). All remaining modelling and discretisation choices follow \cite{tuomov2024online-eit}.

For each time instance, current injection is applied sequentially at a single electrode while all remaining electrodes are grounded, resulting in $\Nmeas = 16$ excitation patterns. As is standard in EIT, the current measured at the excited electrode is excluded from the data, yielding $(\Nelec - 1)\Nmeas = 240$ measurements per frame.

We consider the following four scenarios:
\begin{description}
    \item[Constant Motion]
          In this baseline experiment, a single inclusion moves with constant velocity in a homogeneous background. This serves as a reference case.

    \item[Circular Motion]
          A single inclusion follows a circular trajectory, violating the constant-velocity assumption.

    \item[Halting Motion]
          A single inclusion stops completely at frames 1000 and 2000, further challenging the prediction model.

    \item[Disappearing Inclusions]
          Two inclusions move along circular paths. One inclusion disappears at frame 500 and the other at frame 1000; both reappear at frame 1500. This scenario violates the incompressibility assumption.
\end{description}

The \emph{Constant Motion} experiment consists of 400 frames, while the remaining experiments contain 2000 frames. The background conductivity is fixed at $x_{\mathrm{bg}} = 1\,\mathrm{S}$, and all inclusions are resistive with conductivity $x_{\mathrm{incl}} = 10^{-4}\,\mathrm{S}$. Measurement data are simulated using a finite element approximation of the complete electrode model \eqref{eq:eit:cem}, with additive Gaussian noise at a relative level of $10^{-4}$. Further details on the forward solver and derivative computations can be found in \cite{jauhiainen2020ripgn}.

Unless stated otherwise, all experiments use identical algorithmic parameters. These are chosen in accordance with the guidelines of \cite{tuomov2024online-eit}.%

\subsection{Algorithms and their parameters}
\label{sec:numerical:algorithms}

We compare the numerical performance of the following instances of \cref{alg:pd:alg}.
The difference between the instances is how $\nextestgrad$ is formed.
All methods are based on the reduced adjoint equation of \cref{ex:tracking:pde-adjoint} (or, more precisely, a version with a change-of-basis as described in \cite{tuomov2024tracking}), but differ in how the inner iterates $\nextu$ and adjoint iterates $\nexxt w$ are calculated.

\begin{description}
    \item[Gauss–Seidel 7 + 1]
          This is our proposed single-loop primal-dual method, taking on each iteration of the outer optimisation method, 7 Gauss–Seidel iterations towards the solution the EIT forward PDE to form $\nextu$ from $\thisu$, and 1 Gauss–Seidel iteration towards the solution of the reduced adjoint PDE to form $\nexxt w$ from $\this w$.
          This follows \cref{thm:tracking:inner-linear-system-splitting,ex:splitting:Gauss–Seidel}.
          The errors for \cref{cor:pd:main} are expanded in \cref{rem:inner-adjoint:error}.

    \item[Exact]
          In this variant of our method, we solve the PDE and its reduced adjoint exactly,
          taking (up to numerical precision) $\nextestgrad=\nextgrad$, $\nextu=S_u(\thisx)$, and $\nexxt w=S_w(\thisx)$.

    \item[Background]
          This is the method of \cite{tuomov2024online-eit} with the same step length parameters as we used with the above two methods.
          This method solves the PDE and the adjoint PDE in a background computational thread.
          While waiting for the solution to become available, the main thread proceeds with the iterations of the optimisation method with new data frames, using the modified data term
          \[
              \check E(x) \defeq \frac{1}{2}\norm{I(\check x) + \grad I(\check x)^*(x-\check x)-b}_{\inv\Sigma}^2
          \]
          that linearises the solution operator $I$ at a past iterate (linearisation point) $\check x$.
          This model necessitates the full basic adjoint, instead of the reduced adjoint.
          Once the new solution becomes available, it updates the linearisation point, and asks the background solver to solve the PDE at its own current iterate.
    \item[Background, original step lengths]
          This is the method used in the numerical experiments of \cite{tuomov2024online-eit}, in which we take $\tau = 0.85/\check L^2$ and $\sigma = 1$, where $\check x$ is the current linearisation point of the background solver, and $\check L \defeq \norm{\grad^2 \check E(x)} = \norm{\WOp\grad I(\check x)}$.
\end{description}

The step length parameters for the three first algorithms are fixed $\tau=0.005$. $\sigma = 6.0$. The scheme of \cite{tuomov2024online-eit} cannot be justified anymore, as we no longer linearise the solution operator. For $\check E$, $\check L$ above gives a valid step length for the convex PDPS. Without the linearisation, we could, in a heuristic fashion, adaptively updated the step lengths based on $L_k \defeq \max_{j \le k} \norm{\grad^2 \tilde E(x^j)}$, however, due to the lack of linearisation, this would significantly increase the computational by demanding $\grad^2 I(x^j)$.

Also, although we use the method and parametrisation of \cite{tuomov2024online-eit} to benchmark our own methods, the performance we obtain from the method is not exactly the same as in \cite{tuomov2024online-eit}, as we significantly optimised the code.
Due to its interleaved, threaded structure, the method is very sensitive to underlying hardware, other operating system events, and code performance.
Although we have also optimised the PDE solver, especially the main computational thread is much faster than it was in \cite{tuomov2024online-eit}, which causes much more frames to be processed while waiting for the PDE solution.
This results in additional instability that we observe, in particular, in the \emph{Halting Motion} and \emph{Disappearing Inclusions} experiments. Slowing down the method---further limiting how far in history the linearisation point can be---could reduce the instability at additional computational cost.

\subsection{Results}
\label{sec:numerical:results}

The \emph{Baseline} experiment features an inclusion moving at constant speed. \Cref{fig:PDP1:value,fig:PDP2:value,fig:PDP1:gterror,fig:PDP2:gterror,tab:summary} show the relative objective values and iterate errors

The \emph{Constant Motion} experiment features an inclusion moving at constant speed. \Cref{fig:PDP1:value,fig:PDP2:value,fig:PDP1:gterror,fig:PDP2:gterror,tab:summary} show the relative objective values and iterate errors
\begin{align}
    \label{eq:relmeasures}
    J_{\operatorname{rel}}^k \defeq \frac{J_k(x^k)}{J_k(x^0)}
    \quad\text{and}\quad
    e_{\operatorname{rel}}^k \defeq \frac{\norm{x^k-x_{\text{true}}^k}}{\norm{x_{\text{true}}^k}},
\end{align}
where $x_{\text{true}}^k$ denotes the ground truth at iteration~$k$, for the tested predictor configurations.

We illustrate the behaviour of the algorithm in terms of function values \cref{fig:PDP1:value,fig:PDP2:value} for no prediction and for the best-performing outer primal-dual predictor $P_k$ from \cite{tuomov2024online-eit,tuomov-better-predict}: optical flow for the primal variable $x$, and a scaling dual predictor for $y$.
For the inner and adjoint predictor of the variables $u$ and $w$ we set, for simplicity, $Q_k=\Id$ and $\tilde Q_k=\Id$.
Our numerical results could possibly be improved slightly by developing more fine-tuned predictors for these variables.
Then the error terms of the regret \cref{cor:pd:main} are given by \cref{rem:inner-adjoint:error}.
The corresponding plots of relative error to the ground truth are in \cref{fig:PDP1:gterror,fig:PDP2:gterror}.

\begingroup

\def\NPName{\emph{No Prediction}}
\expandafter\def\csname PDP1Name\endcsname{the \emph{Greedy} predictor}
\expandafter\def\csname PDP2Name\endcsname{the \emph{Affine} predictor}

\def\doplotGTError#1{%
    \def\motion{#1}%
    \tikzsetnextfilename{\mesh_\motion\alphaparam_\pred_gt_rel_error}
    \begin{tikzpicture}
        \begin{axis}[
                width=0.45\textwidth,
                height=0.2\textwidth,
                grid=major,
                title={\motion},
                scaled x ticks=false,
                xminorticks=true,
                yminorticks=true,
                axis x line*=bottom,
                axis y line*=left,
                legend style={
                        legend pos=north east,
                        /tikz/column sep=1ex,
                        draw=none,
                        fill=none,
                        legend columns=-1,
                    },
                y tick label style={
                        /pgf/number format/fixed,
                        /pgf/number format/precision=2
                    },
                legend to name = {leg:\pred:\motion:gterror},
                scale only axis,
                x tick label style={
                        /pgf/number format/1000 sep={\ },
                    },
            ]
            \addplot [color=Set2-B, dashed, line width=1pt]
            table [x=iter, y=gt_rel_error, col sep=comma]
                {Results/\mesh/\motion\alphaparam/background\origextra_\pred_\bgorigsteplengths/timings.txt};
            \addlegendentry{Background; orig step lengths}

            \addplot [color=Set2-C, dotted, line width=1pt]
            table [x=iter, y=gt_rel_error, col sep=comma]
                {Results/\mesh/\motion\alphaparam/background\bgextra_\pred_\steplengths/timings.txt};
            \addlegendentry{Background}

            \addplot [color=Set2-D, line width=1pt]
            table [x=iter, y=gt_rel_error, col sep=comma]
                {Results/\mesh/\motion\alphaparam/exact\extra_\pred_\steplengths/timings.txt};
            \addlegendentry{Exact}

            \addplot [color=Set2-A, line width=1pt]
            table [x=iter, y=gt_rel_error, col sep=comma]
                {Results/\mesh/\motion\alphaparam/GaussSeidel\gsn\gsextra_\pred_\steplengths/timings.txt};
            \addlegendentry{Gauss--Seidel / \gsnname}
        \end{axis}
    \end{tikzpicture}
}

\def\doplotValue#1{%
    \def\motion{#1}%
    \tikzsetnextfilename{\mesh_\motion\alphaparam_\pred_rel_value}
    \begin{tikzpicture}
        \begin{axis}[
                width=0.45\textwidth,
                height=0.2\textwidth,
                grid=major,
                ymode=log,
                title={\motion},
                scaled x ticks=false,
                xminorticks=true,
                yminorticks=true,
                axis x line*=bottom,
                axis y line*=left,
                legend style={
                        legend pos=north east,
                        /tikz/column sep=1ex,
                        draw=none,
                        fill=none,
                        legend columns=-1,
                    },
                legend to name = {leg:\pred:\motion:value},
                legend columns=-1,
                scale only axis,
                x tick label style={
                        /pgf/number format/1000 sep={\ },
                    },
            ]
            \addplot [color=Set2-B, dashed, line width=1pt]
            table [x=iter, y=rel_value, col sep=comma]
                {Results/\mesh/\motion\alphaparam/background\origextra_\pred_\bgorigsteplengths/timings.txt};
            \addlegendentry{Background; orig. step lengths}

            \addplot [color=Set2-C, dotted, line width=1pt]
            table [x=iter, y=rel_value, col sep=comma]
                {Results/\mesh/\motion\alphaparam/background\bgextra_\pred_\steplengths/timings.txt};
            \addlegendentry{Background}

            \addplot [color=Set2-D, line width=1pt]
            table [x=iter, y=rel_value, col sep=comma]
                {Results/\mesh/\motion\alphaparam/exact\extra_\pred_\steplengths/timings.txt};
            \addlegendentry{Exact}

            \addplot [color=Set2-A, line width=1pt]
            table [x=iter, y=rel_value, col sep=comma]
                {Results/\mesh/\motion\alphaparam/GaussSeidel\gsn\gsextra_\pred_\steplengths/timings.txt};
            \addlegendentry{Gauss--Seidel / \gsnname}
        \end{axis}
    \end{tikzpicture}
}

\def\plotallGTError#1{
    \gdef\pred{#1}
    \def\predname{\csname #1Name\endcsname}

    \begin{figure}
        \centering%
        \pgfplotslegendfromname{leg:\pred:ConstantMotion:gterror}%
        \tikzexternalenable%
        \\
        \setlength{\tabcolsep}{0pt}
        \noindent
        \begin{tabular}{rr}
            \doplotGTError{ConstantMotion}%
             &
            \doplotGTError{HaltingMotion}%
            \\
            \doplotGTError{TurningMotion}%
             &
            \doplotGTError{DisappearingMotion}%
        \end{tabular}
        \tikzexternaldisable%
        \caption{Ground truth relative error for \predname. The horizontal axis indicates the iteration, and the vertical axis the error the relative error to the ground truth \eqref{eq:relmeasures}.}
        \label{fig:\pred:gterror}
    \end{figure}

}

\def\plotallValue#1{
    \gdef\pred{#1}
    \def\predname{\csname #1Name\endcsname}

    \begin{figure}
        \centering%
        \pgfplotslegendfromname{leg:\pred:ConstantMotion:value}%
        \tikzexternalenable%
        \\
        \setlength{\tabcolsep}{0pt}
        \noindent
        \begin{tabular}{rr}
            \doplotValue{ConstantMotion}%
             &
            \doplotValue{HaltingMotion}%
            \\
            \doplotValue{TurningMotion}%
             &
            \doplotValue{DisappearingMotion}%
        \end{tabular}
        \tikzexternaldisable%
        \caption{Relative function value for \predname. The horizontal axis indicates the iteration, and the vertical axis the relative function value \eqref{eq:relmeasures}.}
        \label{fig:\pred:value}
    \end{figure}

}

\def\mesh{Disk16}
\def\alphaparam{0.1}

\def\origextra{_oldstep6}
\def\bgorigsteplengths{sigma1.0_tau0.85}

\def\gsn{7}
\def\gsnname{7+1}
\def\steplengths{sigma10.0_tau0.0053}
\def\bgextra{_fixedstep}
\def\extra{_fixedstep}
\def\gsextra{_ADJGaussSeidel1_fixedstep_upa10}

\plotallValue{PDP1}\plotallGTError{PDP1}
\plotallValue{PDP2}\plotallGTError{PDP2}

\def\illustration#1#2{
    \def\motion{#1}
    \def\pred{#2}
    \def\predname{\csname #2Name\endcsname}

    \def\choseniters{
        \raisebox{-.5\height}{\includegraphics[width=0.12\linewidth]{\pfx_1.png}}
        &
        \raisebox{-.5\height}{\includegraphics[width=0.12\linewidth]{\pfx_400.png}}
        &
        \raisebox{-.5\height}{\includegraphics[width=0.12\linewidth]{\pfx_800.png}}
        &
        \raisebox{-.5\height}{\includegraphics[width=0.12\linewidth]{\pfx_1200.png}}
        &
        \raisebox{-.5\height}{\includegraphics[width=0.12\linewidth]{\pfx_1600.png}}
        &
        \raisebox{-.5\height}{\includegraphics[width=0.12\linewidth]{\pfx_2000.png}}
    }

    \begin{figure}
        \centering
        \setlength{\tabcolsep}{0pt}
        \begin{tabular}{lcccccc}
            Alg.~/ Frame & 1 & 400 & 800 & 1200 & 1600 & 2000
            \\
            \hline
            True
                         &
            \gdef\pfx{Results/\mesh/\motion\alphaparam/true/conductivity}%
            \choseniters%
            \\
            \hline
            Bg.~orig.
                         &
            \gdef\pfx{Results/\mesh/\motion\alphaparam/background\origextra_\pred_\bgorigsteplengths/reco_conductivity}%
            \choseniters%
            \\
            Background
                         &
            \gdef\pfx{Results/\mesh/\motion\alphaparam/background\bgextra_\pred_\steplengths/reco_conductivity}%
            \choseniters%
            \\
            Exact
                         &
            \gdef\pfx{Results/\mesh/\motion\alphaparam/exact\extra_\pred_\steplengths/reco_conductivity}%
            \choseniters%
            \\
            Gauss–Seidel \gsnname
                         &
            \gdef\pfx{Results/\mesh/\motion\alphaparam/GaussSeidel\gsn\gsextra_\pred_\steplengths/reco_conductivity}%
            \choseniters%
        \end{tabular}
        \caption{Reconstructions for the \motion~experiment with \predname.}
    \end{figure}

}

\illustration{HaltingMotion}{PDP2}
\illustration{TurningMotion}{PDP2}
\illustration{DisappearingMotion}{PDP2}

\tikzifexternalizing{\def\fubar{no}}{\let\fubar\empty}
\ifx\fubar\empty
    \pgfplotstableset{
        col sep=comma,
        every column/.style={
                fixed,
                fixed zerofill,
                precision=2,
            },
        columns/experiment_name/.style = {
                column name = {Exp.},
                string type,
                string replace={ConstantMotion0.1}{Const.},
                string replace={TurningMotion0.1}{Turn.},
                string replace={HaltingMotion0.1}{Halt.},
                string replace={DisappearingMotion0.1}{Dis.},
            },
        columns/predictor_name/.style = {
                column name = {Pred.},
                string type,
                string replace={NP}{No},
                string replace={PDP1}{Gr},
                string replace={PDP2}{Af},
            },
        columns/algorithm_name/.style = {
                column name = {Alg.},
                string type,
                string replace={GaussSeidel7_ADJGaussSeidel1_fixedstep_upa10}{GS},
                string replace={exact_fixedstep}{Exact},
                string replace={background_fixedstep}{Bg.},
                string replace={background_oldstep6}{Orig.},
            },
        columns/re_mean1/.style = {column name = {Mean}},
        columns/re_meanN/.style = {column name = {Mean}},
        columns/re_stdN/.style = {column name = {Std}},
        columns/re_ci/.style = {
                string type,
                column name = {CI}
            },
        columns/re_ci_lower/.style = {column name = {CI lower}},
        columns/re_ci_upper/.style = {column name = {CI upper}},
        columns/re_gt_mean1/.style = {column name = {GT mean}},
        columns/re_gt_meanN/.style = {
                column name = {Mean},
                precision=3,
            },
        columns/re_gt_stdN/.style = {
                column name = {Std},
                precision=3,
            },
        columns/re_gt_ci/.style = {
                string type,
                column name = {CI},
            },
        columns/re_gt_ci_lower/.style = {column name = {GT CI lower}},
        columns/re_gt_ci_upper/.style = {column name = {GT CI upper}},
        columns/total_time/.style = {column name = {Total time}},
        columns/frame_time/.style = {
                column name = {Real},%
                precision=4,
            },
        columns/total_cpu_time/.style = {column name = {Total CPU time}},
        columns/frame_cpu_time/.style = {
                column name = {CPU},%
                precision=4,
            },
    }

    \pgfplotstableread{Results/Disk16/ConstantMotion0.1/summarystats.csv}{\RES}
    \pgfplotstablevertcat{\RES}{Results/Disk16/TurningMotion0.1/summarystats.csv}
    \pgfplotstablevertcat{\RES}{Results/Disk16/HaltingMotion0.1/summarystats.csv}
    \pgfplotstablevertcat{\RES}{Results/Disk16/DisappearingMotion0.1/summarystats.csv}

    \pgfplotstablecreatecol[
        create col/assign/.code={%
                \getthisrow{re_ci_lower}\low
                \getthisrow{re_ci_upper}\high
                \edef\entry{$[\pgfmathprintnumber[precision=2,fixed]{\low},
                            \pgfmathprintnumber[precision=2,fixed]{\high}]$}%
                \pgfkeyslet{/pgfplots/table/create col/next content}\entry
            }
    ]{re_ci}{\RES}
    \pgfplotstablecreatecol[
        create col/assign/.code={%
                \getthisrow{re_gt_ci_lower}\low
                \getthisrow{re_gt_ci_upper}\high
                \edef\entry{$[\pgfmathprintnumber[precision=3,fixed]{\low},
                            \pgfmathprintnumber[precision=3,fixed]{\high}]$}%
                \pgfkeyslet{/pgfplots/table/create col/next content}\entry
            }
    ]{re_gt_ci}{\RES}

    \begin{table}
        \caption{Summary statistics of all experiments.
            The experiments (Exp.), predictors (Pred.:Af = Affine; Gr = Greedy), and the algorithm (Alg.: GS = Gauss–Seidel 7 + 1d, Bg. = Background, new step lengths, Orig. = Background, original step lengths).
            Statistics marked with an asterisk$^*$ begin after the first 50 frames (Constant Motion) or 200 frames (other experiments).
            Mean, standard deviation (Std), and the 95\% confidence interval (CI) are for the relative function value error $J_{\operatorname{rel}}^k$, and the “GT” variants for the relative error to the ground truth, $e_{\operatorname{rel}}^k$, both defined in\eqref{eq:relmeasures}.
            The average real and CPU times spent processing each data frame, are in seconds.
        }
        \label{tab:summary}
        \setlength{\tabcolsep}{4pt}
        \centering
        \pgfplotstabletypeset[
            row predicate/.code={%
                    \pgfplotstablegetelem{#1}{predictor_name}\of\RES
                    \edef\target{NP}%
                    \ifx\pgfplotsretval\target
                        \pgfplotstableuserowfalse
                    \fi
                },
            every head row/.style={
                    before row={\toprule
                            & & & \multicolumn{3}{c}{Function value error$^*$} & \multicolumn{3}{c}{Ground truth error$^*$} & \multicolumn{2}{c}{Frame time}
                            \\
                            \cmidrule(lr){4-6}\cmidrule(lr){7-9}\cmidrule(lr){10-11}
                        },
                    after row=\midrule
                },
            every last row/.style={
                    after row=\bottomrule
                },
            every nth row={4}{before row=\midrule},
            columns={
                    experiment_name,
                    predictor_name,
                    algorithm_name,
                    re_meanN,
                    re_stdN,
                    re_ci,
                    re_gt_meanN,
                    re_gt_stdN,
                    re_gt_ci,
                    frame_time,
                    frame_cpu_time
                }
        ]{\RES}
    \end{table}

\fi
\endgroup

To summarise the information shown in the graphs and tables, our proposed single-loop Gauss–Seidel approach is clearly superior to the alternatives.
It has a 5-6 times lower CPU footprint than the approach with an exact PDE solver, and shaves off a third of the CPU footprint of the old background solver of \cite{tuomov2024online-eit}.
While the latter might be an acceptable trade-off if the old approach were otherwise better, it, however, becomes very unstable in the Halting/Disappearing test scenarios well: the linearised model is not updated at a sufficient frequency compared to the nature of data evoluation. More frequent updates could be imposed at a higher computational cost, comparable to our proposed single-loop solvers that have a simpler and more reliable computational architecture that does not depend on synchronisation between computational threads.
That being said, based on additional experiments, also the Gauss–Seidel approach becomes unstable if the number of adjoint steps is reduced from the 7 steps used here to 6 or less.
The background solver also heavily depends on the specific adaptive step length estimation scheme of \cite{tuomov2024online-eit}; it does not perform well with the simple fixed step sizes that we use with the other approaches.

\appendix

\section{Technical tracking estimates}
\label{sec:technical}

Following the structure of \cite[Appendix A]{tuomov2024tracking}, we prove here a simple scalar online tracking results, which are used to establish the results of \cref{sec:online-tracking,sec:inner-adjoint}.
The following assumption is a scalar variant of \cref{ass:tracking:main}, and differs from \cite[Assumption A.1]{tuomov2024online-eit} through the inclusion of the errors $\epsilon_k,\tilde\epsilon_k$.\begin{assumption}
    \label{ass:online-scalar-tracking:main}
    For a given $k \ge 0$ and scalars $d^u_{0}, \ldots, d^u_{k+1}, d^w_0, \ldots,  d^w_{k+1}, \varrho_{1}, \ldots, \varrho_{k} \ge 0$, and $\widetilde d_{0}, \ldots, \widetilde d_{k+1} \in \R$, there exist $\pi_u, \pi_w, \primaldifffact \alpha_w,\alpha_u>0$, and some errors $\epsilon_{k}, \tilde \epsilon_k \geq 0$,  such that%
    \begin{align}
        \label{item:online-scalar-tracking:main:inner-tracking}
        \tag{i}
        \kappa_u d^u_{j+1}
         &
        \le
        d^u_{j}
        + \pi_u(\varrho_{j} + \varepsilon_j)
         &   &
        \quad\text{for all}\quad j=1,\ldots,k,
        \\
        \label{item:online-scalar-tracking:main:adjoint-tracking}
        \tag{ii}
        \kappa_w d^w_{j+1}
         &
        \le
        d^w_j
                + {\primaldifffact} d^u_{j+1}
        + \pi_w(\varrho_{j} + \varepsilon_j)
         &   &
        \quad\text{for all}\quad j=1,\ldots,k,
        \quad\text{and}
        \\
        \label{item:online-scalar-tracking:main:differential-transformation}
        \tag{iii}
        \widetilde d_{j+1}
         &
        \le
        \alpha_u d^u_{j+1}
        + \alpha_w d^w_{j+1} + \tilde\epsilon_j
         &   &
        \quad\text{for all}\quad j=0,\ldots,k.
    \end{align}

\end{assumption}

Based on this recursive assumption, we aim to establish simple non-recursive bounds on $\widetilde d_{k+1}$.
Note that we could at the level of the assumption combine $\varrho_{j}$ and $\varepsilon_j$ into the single variable
\[
    \check \varrho_j \defeq \varrho_j + \epsilon_j,
\]
however, due to their different purposes, do not do this. This combination is, however, allows us use \cite{tuomov2024tracking} to deduce the next lemma that unrolls the recursion of \cref{item:online-scalar-tracking:main:inner-tracking,item:online-scalar-tracking:main:adjoint-tracking}.\begin{lemma}[{\cite[Lemma A.2]{tuomov2024tracking}}]
    \label{lemma:weaker:generic-recursion}
    Let \cref{ass:online-scalar-tracking:main}\,\cref{item:online-scalar-tracking:main:inner-tracking,item:online-scalar-tracking:main:adjoint-tracking} hold for a $k\geq 0$.
    Then, letting $\iota_k \defeq \sum_{m=1}^{k}  \kappa_u^{-m}\kappa_w^{-(k+1-m)}$ (understanding that $\iota_0=0$), we have
    \begin{equation}
        \label{eq:weaker:generic-recursion:claim}
        \begin{aligned}[t]
            \nexxt R
             &
            \defeq
            \alpha_u d_{k+1}^u + \alpha_w d_{k+1}^w
            \\
             &
            \le
            (\alpha_u\kappa_u^{-k} + \alpha_w \iota_k {\primaldifffact}) d_1^u
            +
            \alpha_w\kappa_w^{-k} d_1^w
            \\
            \MoveEqLeft[-1]
            + \sum_{j=0}^{k-1} \bigl(\alpha_u\kappa_u^{-(k-j)}\pi_u
            + \alpha_w[\iota_{k-j}{\primaldifffact}\pi_u + \kappa_w^{-(k-j)}\pi_w] \bigr) \check\varrho_{j+1}.
        \end{aligned}
    \end{equation}
\end{lemma}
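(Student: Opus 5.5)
We prove \eqref{eq:weaker:generic-recursion:claim} by induction on $k$, unrolling the two recursions of \cref{ass:online-scalar-tracking:main} separately. Write $\check\varrho_j = \varrho_j+\varepsilon_j$, so that (i) reads $\kappa_u d^u_{j+1}\le d^u_j+\pi_u\check\varrho_j$ and (ii) reads $\kappa_w d^w_{j+1}\le d^w_j+\mu_u d^u_{j+1}+\pi_w\check\varrho_j$. This is precisely the form of the corresponding assumption in \cite{tuomov2024tracking}, so the claim is immediate from \cite[Lemma A.2]{tuomov2024tracking}. For completeness, we sketch the unrolling.

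\textbf{Step 1: the inner bound.} Dividing (i) by $\kappa_u$ and iterating from $j=1$ to $k$ gives
\[
d^u_{k+1}\le \kappa_u^{-k}d^u_1+\sum_{j=0}^{k-1}\kappa_u^{-(k-j)}\pi_u\check\varrho_{j+1}.
\]
This is a routine induction: the inductive step multiplies the previous bound by $\kappa_u^{-1}$ and adds $\kappa_u^{-1}\pi_u\check\varrho_k$.

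\textbf{Step 2: the adjoint bound.} Iterating (ii) in the same way gives
\[
d^w_{k+1}\le \kappa_w^{-k}d^w_1+\sum_{m=1}^{k}\kappa_w^{-(k+1-m)}\bigl(\mu_u d^u_{m+1}+\pi_w\check\varrho_m\bigr).
\]
Substituting Step 1 for each $d^u_{m+1}$ and exchanging the order of summation is the only delicate step. The coefficient of $d^u_1$ becomes
\[
\mu_u\sum_{m=1}^k\kappa_w^{-(k+1-m)}\kappa_u^{-m}=\mu_u\iota_k .
\]
The coefficient of $\check\varrho_{j+1}$ collects $\kappa_w^{-(k-j)}\pi_w$ from the direct term. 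It also collects $\mu_u\pi_u\sum_{m=j+1}^{k}\kappa_w^{-(k+1-m)}\kappa_u^{-(m-j)}$, which after the reindexing $m'=m-j$ equals $\mu_u\pi_u\iota_{k-j}$. We expect this index bookkeeping, including the boundary case $\iota_0=0$ for $k=0$, to be the main obstacle.

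\textbf{Step 3: combine.} Multiply the bound of Step 1 by $\alpha_u$ and that of Step 2 by $\alpha_w$, then add. Grouping the coefficients of $d^u_1$, $d^w_1$ and $\check\varrho_{j+1}$ yields exactly \eqref{eq:weaker:generic-recursion:claim}. The differential transformation (iii) is not needed here; it only enters later, when bounding $\widetilde d_{k+1}\le \nexxt R+\tilde\epsilon_k$.
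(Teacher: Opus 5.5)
Your proposal is correct and matches the paper. The paper likewise merges $\varrho_j+\varepsilon_j$ into $\check\varrho_j$ and cites Lemma A.2 of the earlier tracking paper without giving further argument. Your explicit unrolling also checks out: the inner bound, its substitution into the adjoint recursion, and the reindexing $m'=m-j$ that produces $\iota_{k-j}$ are all right, and so is the trivial case $k=0$.
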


The next two lemmas form our core estimates. They extend the corresponding lemmas of \cite{tuomov2024tracking} to $k$-dependent functions, with the corresponding additional errors from \cref{ass:online-scalar-tracking:main}.
To obtain the estimates, recalling that $\kappa_u, \kappa_w>1$, we observe that
\begin{equation}
    \label{eq:weaker:iota-est}
    p^k \iota_k
    \le
    \inv p k (\kappa/p)^{-(k+1)}
    \quad\text{for}\quad
    \kappa \defeq \min (\kappa_u, \kappa_w) > 1
    \text{ and any } p \in (0, \kappa).
\end{equation}
Thus, by sum formulae for arithmetic-geometric progressions \cite[formula 0.113]{gradshteyn2014table},
\begin{equation}
    \label{eq:weaker:iota-est-sum}
    \sum_{k=0}^{n-1} p^k \iota_k
    \le
    \sum_{k=0}^{\infty} p^k \iota_k
    \le
    p^{-1}(\kappa/p-1)^{-2}
    =p(\kappa-p)^{-2}
    \quad \text{for all } n \in \N.
\end{equation}\begin{lemma}
    \label{lemma:weaker:inner-product-error-estimate}
    Suppose \cref{ass:online-scalar-tracking:main} holds for a $k\geq 0$.
    Then for any $p\in(0,\kappa)$, we have
    \begin{equation}
        \label{eq:weaker:inner-product-error-estimate}
        \tilde d^2_{k+1}
        \le
        (\alpha_u d_{k+1}^u + \alpha_w d_{k+1}^w + \transformError{k})^2
        \le
        \check e_{p,k}.
    \end{equation}
    where, for
    $
        \trackingres_j
        \defeq \alpha_u\kappa_u^{-j}\pi_u + \alpha_w[\iota_j{\primaldifffact}\pi_u + \kappa_w^{-j}\pi_w],
    $
    and $\MAX\kappa \defeq \max\{\kappa_u,\kappa_w\}$,
    we set
    \begin{align}
        \label{eq:online-scalar-tracking:ressum}
        \trackingressum
         &
        \defeq
        \frac{\MAX\kappa}{p}
        \sum_{j=0}^\infty p^j \trackingres_j
        \le
        \frac{(\alpha_u\pi_u+\alpha_w\pi_w)\kappa\MAX\kappa}{p(\kappa-p)}
        + \frac{\alpha_w {\primaldifffact}\pi_u\MAX\kappa}{p^2(\kappa-p)^2}
        \quad\text{and}
        \\
        \label{eq:online-scalar-tracking:ek}
        \check e_{p,k}
         &
        \defeq
        \frac{5}{4}\left(
        \frac{\trackingressum(\alpha_u\kappa_u^{-k} + \alpha_w \iota_k {\primaldifffact})}{\pi_u p^k}
        (d_1^u)^2
        +
        \frac{\trackingressum\alpha_w\kappa_w^{-k}}{\pi_w p^k}
        (d_1^w)^2 + \sum_{j=0}^{k-1} \frac{\trackingressum\trackingres_{k-j}^2}{p^{k-j}}\check \varrho_{j+1}^2
        + 4 \transformError{k}^2
        \right).
    \end{align}
\end{lemma}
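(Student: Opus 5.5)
The first inequality in \eqref{eq:weaker:inner-product-error-estimate} is immediate. I square \cref{ass:online-scalar-tracking:main}\,\cref{item:online-scalar-tracking:main:differential-transformation} with $j=k$; the right-hand side is nonnegative. If $\tilde d_{k+1}<0$, I would instead read $\tilde d_{k+1}$ as the norm quantity it represents in applications, which is nonnegative. For the second inequality, I invoke \cref{lemma:weaker:generic-recursion} to bound $R^{k+1}=\alpha_u d^u_{k+1}+\alpha_w d^w_{k+1}$ by
\[
a\,d_1^u + b\,d_1^w + \sum_{j=0}^{k-1}\trackingres_{k-j}\check\varrho_{j+1},
\qquad a\defeq\alpha_u\kappa_u^{-k}+\alpha_w\iota_k\primaldifffact,\quad b\defeq\alpha_w\kappa_w^{-k}.
\]
This uses exactly the definition of $\trackingres_j$ in the statement. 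It then remains to square $R^{k+1}+\transformError{k}$.

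The squaring is done in two stages. First, Young's inequality gives $(R^{k+1}+\transformError{k})^2\le \tfrac54 (R^{k+1})^2+5\transformError{k}^2$, since $2rt\le \tfrac14 r^2+4t^2$. This produces the factor $\tfrac54$ and the term $\tfrac54\cdot 4\transformError{k}^2$ in $\check e_{p,k}$. Second, I bound $(R^{k+1})^2$ by a weighted Cauchy--Schwarz inequality. I write the sum as $\sum_i c_i z_i$ with weights $c_i$ and apply $(\sum_i c_i z_i)^2\le (\sum_i c_i w_i)(\sum_i c_i z_i^2/w_i)$.

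The weights are chosen so that the first factor is at most $\trackingressum$. Concretely, I take:
\begin{itemize}
\item weight $p^{k-j}$ for the term $\trackingres_{k-j}\check\varrho_{j+1}$;
\item weight $\pi_u p^{k}$ for the term with coefficient $a$;
\item weight $\pi_w p^{k}$ for the term with coefficient $b$.
\end{itemize}
The first factor is then $\sum_{j}p^{k-j}\trackingres_{k-j}+p^k(a\pi_u+b\pi_w)$. Here $a\pi_u+b\pi_w$ has the same structure as $\trackingres_k$, apart from the $\alpha_w\kappa_w^{-k}\pi_w$ cross term, which matches the definition exactly. So the total is at most $\sum_{m=0}^{k}p^m\trackingres_m\cdot 2$ in the worst case. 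The factor $\MAX\kappa/p$ in the definition of $\trackingressum$ should absorb this slack: the index shift from $\iota_k$ and $\kappa^{-k}$ versus $\kappa^{-(k+1)}$ gives a factor $\kappa\le\MAX\kappa$ and a factor $1/p$. I expect this bookkeeping, namely making the first factor literally at most $\trackingressum$, to be the main obstacle. If the naive weights fail by a constant, I would rescale the $d_1$ weights by $\MAX\kappa/p$ rather than change the stated form. With these weights, the second factor gives precisely the terms $\trackingressum a(d_1^u)^2/(\pi_u p^k)$, $\trackingressum b(d_1^w)^2/(\pi_w p^k)$ and $\trackingressum\trackingres_{k-j}^2\check\varrho_{j+1}^2/p^{k-j}$ of $\check e_{p,k}$.

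Finally, the bound on $\trackingressum$ in \eqref{eq:online-scalar-tracking:ressum} follows by summing geometric series. Since $\kappa_u,\kappa_w\ge\kappa>p$, we have
\[
\sum_j p^j(\alpha_u\kappa_u^{-j}\pi_u+\alpha_w\kappa_w^{-j}\pi_w)\le(\alpha_u\pi_u+\alpha_w\pi_w)\,\frac{\kappa}{\kappa-p}.
\]
The remaining term is $\alpha_w\primaldifffact\pi_u\sum_j p^j\iota_j$, and \eqref{eq:weaker:iota-est-sum} bounds it by $\alpha_w\primaldifffact\pi_u\,p(\kappa-p)^{-2}$. Multiplying by $\MAX\kappa/p$ gives the first term of the stated bound exactly. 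For the second term it gives $\alpha_w\primaldifffact\pi_u\MAX\kappa(\kappa-p)^{-2}$, which is at most the stated $\alpha_w\primaldifffact\pi_u\MAX\kappa/(p^2(\kappa-p)^2)$ when $p\le1$. For $p>1$ I would instead use the cruder estimate \eqref{eq:weaker:iota-est} directly to obtain the $p^{-2}$ form.
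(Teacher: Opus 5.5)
Your route is the paper's: \cref{lemma:weaker:generic-recursion}, then a weighted Cauchy--Schwarz step that is the paper's Young inequality with free parameter $s$ in disguise. Your weights $\pi_u p^k$, $\pi_w p^k$, $p^{k-j}$ are exactly those encoded in its $\theta_k^u,\theta_k^w,\theta_{k,j}$, and your $\tfrac54$/$5$ split of $\transformError{k}$ reproduces its maximisation over $s$.

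\textbf{The gap.} The one step with real content is left open. You need the first factor to satisfy
\[
    p^k\trackingres_k+\sum_{m=1}^{k}p^m\trackingres_m\le\trackingressum,
\]
and you only say the factor $\MAX\kappa/p$ ``should absorb'' the duplicated term. A factor-$2$ estimate does not suffice, since $\MAX\kappa/p$ may be arbitrarily close to $1$. Your fallback also goes the wrong way: multiplying the $d_1$ weights by $\MAX\kappa/p>1$ enlarges the first factor, while dividing them alters the $d_1$ coefficients of $\check e_{p,k}$.

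You name the right mechanism (an index shift), but it has to be made precise as $\trackingres_k\le\MAX\kappa\,\trackingres_{k+1}$. This holds termwise because $\kappa_u^{-k}=\kappa_u\kappa_u^{-(k+1)}$, $\kappa_w^{-k}=\kappa_w\kappa_w^{-(k+1)}$, and $\iota_k\le\kappa_w\iota_{k+1}$. The last inequality holds because $\kappa_w\iota_{k+1}$ consists of the summands of $\iota_k$ plus $\kappa_u^{-(k+1)}$. Hence $p^k\trackingres_k\le\frac{\MAX\kappa}{p}\,p^{k+1}\trackingres_{k+1}$, so the duplicated term moves to the unused index $k+1$. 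Separately, $\sum_{m=1}^k p^m\trackingres_m\le\frac{\MAX\kappa}{p}\sum_{m=1}^k p^m\trackingres_m$ because $\MAX\kappa\ge\kappa>p$. Adding the two, the first factor is at most $\frac{\MAX\kappa}{p}\sum_{m=1}^{k+1}p^m\trackingres_m\le\trackingressum$. This is exactly the paper's argument.

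\textbf{Two claimed matches that fail.}

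First, your weights produce terms $\trackingressum\trackingres_{k-j}\check\varrho_{j+1}^2/p^{k-j}$, with $\trackingres_{k-j}$ to the first power, not squared as printed in $\check e_{p,k}$. The paper's $\theta_{k,j}$ give the same first power, and the squared version is false in general. Take $k=1$, $d_1^u=d_1^w=0$, $\varrho_1>0$, $\varepsilon_1=\transformError{1}=0$, and equality in (i)--(iii). Then $\tilde d_2=\trackingres_1\check\varrho_1$, and the squared bound would require $\trackingressum\ge\tfrac45 p$. That fails once $\alpha_u,\alpha_w$ are scaled down, since $\trackingressum$ is linear in them. So prove and state the unsquared form rather than claiming a precise match.

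Second, your fallback for $p>1$ cannot succeed, because the stated bound on $\trackingressum$ is false there. For $\kappa_u=\kappa_w=\kappa$ one has $\iota_j=j\kappa^{-(j+1)}$, both series sum exactly, and
\[
    \trackingressum=\frac{(\alpha_u\pi_u+\alpha_w\pi_w)\kappa\MAX\kappa}{p(\kappa-p)}+\frac{\alpha_w\primaldifffact\pi_u\MAX\kappa}{(\kappa-p)^2}.
\]
This exceeds the stated bound for every $p\in(1,\kappa)$. Your computation for $p\le1$ is correct, and $p=1$ is the only case used later. The paper's appeal to \eqref{eq:weaker:iota-est-sum} likewise only covers that range.

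Your caveat on the sign of $\tilde d_{k+1}$ is fair, and it applies equally to the paper's proof.
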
\begin{proof}
    Invoking the inner and adjoint tracking \cref{ass:online-scalar-tracking:main}\,\cref{item:online-scalar-tracking:main:inner-tracking,item:online-scalar-tracking:main:adjoint-tracking} and \cref{lemma:weaker:generic-recursion}, we obtain
    \[
        \nexxt R
        \defeq
        \alpha_u d_{k+1}^u + \alpha_w d_{k+1}^w
        \le
        (\alpha_u\kappa_u^{-k} + \alpha_w \iota_k {\primaldifffact})d_{1}^u
        +
        \alpha_w\kappa_w^{-k} d_{1}^w
        + \sum_{j=0}^{k-1} \trackingres_{k-j} \check \varrho_{j+1}.
    \]
    Thus, using Young's inequality several times, we deduce for any $\theta_k^u,\theta_k^w,\theta_{k,j},s>0$ that
    \begin{equation}
        \label{eq:est0}
        \begin{aligned}[t]
            4s\nexxt R
             &
            \le
            \frac{(\alpha_u\kappa_u^{-k} + \alpha_w \iota_k {\primaldifffact})^2}{4\theta_k^u}
            (d_1^u)^2
            +
            \frac{(\alpha_w\kappa_w^{-k})^2}{4\theta_k^w}
            (d_1^w)^2
            \\
            \MoveEqLeft[-1]
            +
            \sum_{j=0}^{k-1} \frac{\trackingres_{k-j}^2 }{\theta_{k,j}}\check \varrho_{j+1}^2
            +
            4\left(\theta_k^u + \theta_k^w + \sum_{j=0}^{k-1} \theta_{k,j}\right)s^2.
        \end{aligned}
    \end{equation}
    Take $\theta_k^u =  p^k \trackingressum^{-1}\pi_u(\alpha_u\kappa_u^{-k} + \alpha_w \iota_k {\primaldifffact})$,
    $\theta_k^w =  p^k \trackingressum^{-1}\pi_w\alpha_w\kappa_w^{-k}$,
    and $\theta_{k,j} = \trackingressum^{-1}p^{k-j}\trackingres_{k-j}$.
    Obseve that $\iota_k \leq \kappa_w \iota_{k+1}$ \cite[proof of Lemma A.2]{tuomov2024tracking}. Hence, we get $p^k\iota_k \leq (\kappa_w/p)p^{k+1}\iota_{k+1}$, and thus $p^k \psi_k \leq (\bar{\kappa}/p)p^{k+1}\psi_{k+1}$, where $\bar{\kappa}/p > 1$. Then we have
    \[
        \theta_k^u + \theta_k^w + \sum_{j=0}^{k-1} \theta_{k,j}
        = \frac{1}{\trackingressum}\left(p^k \trackingres_k + \sum_{j=1}^k p^j \trackingres_j\right)
        \le
        \frac{\bar{\kappa}}{p\trackingressum}\sum_{j=0}^{k+1} p^j \trackingres_j
        \le 1.
    \]
    By combining \eqref{eq:est0} with the previous bound and rearranging terms establishes
    \[
        4s\nexxt R - 4s^2
        \le
        \frac{\trackingressum(\alpha_u\kappa_u^{-k} + \alpha_w \iota_k {\primaldifffact})}{\pi_u p^k}
        (d_1^u)^2
        +
        \frac{\trackingressum\alpha_w\kappa_w^{-k}}{\pi_w p^k}
        (d_1^w)^2 + \sum_{j=0}^{k-1} \frac{\trackingressum\trackingres_{k-j}^2}{p^{k-j}}\check \varrho_{j+1}^2.
    \]
    We have $4s\transformError{k} \leq s^2 + 4\transformError{k}^2$,
    so it follows
    $
        4s(\nexxt R + \transformError{k}) - 5s^2
        \leq
        \frac{4}{5}\check e_{p,k}.
    $
    The left hand side is maximised by $s = \frac{2}{5}(\nexxt R + \transformError{k})$.
    This gives
    $
        (\nexxt R + \transformError{k})^2
        \leq
        \check e_{p,k}
    $
    By \cref{ass:online-scalar-tracking:main}\,\cref{item:online-scalar-tracking:main:differential-transformation}, $\widetilde d_{k+1} \leq \nexxt R + \transformError{k}$, so \eqref{eq:weaker:inner-product-error-estimate} follows.

    Finally, the bound in \eqref{eq:online-scalar-tracking:ressum} on $\trackingressum$ follows from \cref{eq:weaker:iota-est-sum} and $\sum_{j=0}^{\infty}(p/k)^j = 1/(1 - p/k) = \kappa/(\kappa - p)$.
\end{proof}

\begin{lemma}
    \label{lemma:weaker:error-sum}
    Suppose \cref{ass:online-scalar-tracking:main} holds for a $k\geq 0$. Then, for any $p\in[1,\kappa)$, we have
    \begin{equation}
        \label{eq:weaker:error-sum}
        {\widetilde d_{k+1}}^2 \leq \frac{5}{4}\trackingressum^2 \check\varrho_{k+1}^2 + e_{p,k},
    \end{equation}
    where, for $\check e_{p,k}$ defined in \cref{eq:online-scalar-tracking:ek},
    \begin{equation}
        e_{p,k}\defeq \check e_{p,k} - \frac{5}{4}\trackingressum^2 \check\varrho_{k+1}^2
    \end{equation}
    satisfies
    \[
        \begin{split}
            \sum_{n=0}^{k} p^n e_{p,n}
             &
            \le
            \Psi_p
            \defeq
            \frac{5}{4}\left(
            \frac{(d_1^u)^2}{\pi_u} \bigg(\frac{\trackingressum\alpha_u\kappa}{\kappa-1} + \frac{\trackingressum\alpha_w{\primaldifffact}}{(\kappa-1)^2}\bigg)
            +
            \frac{(d_1^w)^2}{\pi_w} \bigg(\frac{\trackingressum\alpha_w\kappa}{\kappa-1}\bigg)
            +
            \sum_{n=0}^{k} p^n \transformError{n}^2
            \right).
        \end{split}
    \]
\end{lemma}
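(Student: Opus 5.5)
The plan is to read off both claims from \cref{lemma:weaker:inner-product-error-estimate}. First, \cref{eq:weaker:error-sum} is immediate from the bound $\widetilde d_{k+1}^2 \le \check e_{p,k}$ of \cref{eq:weaker:inner-product-error-estimate} and the definition $e_{p,k} = \check e_{p,k} - \frac54\trackingressum^2\check\varrho_{k+1}^2$. So all the work lies in bounding the weighted sum $\sum_{n=0}^k p^n e_{p,n}$.

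Writing $\check e_{p,n}$ out from \cref{eq:online-scalar-tracking:ek} and multiplying by $p^n$, the factors $p^{-n}$ in the initial-error terms cancel. This leaves the following quantity, times $\frac54$:
\[
\frac{\trackingressum(\alpha_u\kappa_u^{-n}+\alpha_w\iota_n\primaldifffact)}{\pi_u}(d_1^u)^2
+\frac{\trackingressum\alpha_w\kappa_w^{-n}}{\pi_w}(d_1^w)^2
+\sum_{j=0}^{n-1}\trackingressum\trackingres_{n-j}^2 p^{j}\check\varrho_{j+1}^2
+4p^n\transformError{n}^2,
\]
from which the subtracted term $\frac54\trackingressum^2 p^n\check\varrho_{n+1}^2$ must be removed.

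Summing the initial-error terms over $n$ uses $\sum_n \kappa_u^{-n}\le \kappa/(\kappa-1)$, the same bound for $\kappa_w$, and $\sum_n\iota_n\le (\kappa-1)^{-2}$. The last comes from \cref{eq:weaker:iota-est-sum} with $p=1$, which is legitimate since $\iota_n$ does not depend on $p$. These give exactly the $(d_1^u)^2$ and $(d_1^w)^2$ coefficients in $\Psi_p$. The transformation errors contribute $\frac54\cdot 4\sum p^n\transformError{n}^2$. The statement shows only the factor $\frac54$, so I will either absorb the $4$ into the constant or note that the $4$ must be carried. I expect the latter, and will check this against \cref{eq:online-scalar-tracking:ek}.

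The main obstacle is the double sum of the $\check\varrho$ terms, which must cancel against the subtracted terms. Exchanging the order of summation gives
\[
\sum_{n=0}^k\sum_{j=0}^{n-1}\trackingressum\trackingres_{n-j}^2 p^j\check\varrho_{j+1}^2=\sum_{j=0}^{k-1} p^j\check\varrho_{j+1}^2\,\trackingressum\sum_{i=1}^{k-j}\trackingres_i^2 .
\]
I then need $\sum_{i\ge1}\trackingres_i^2\le\trackingressum$, so that each coefficient is dominated by the subtracted $\trackingressum^2 p^j\check\varrho_{j+1}^2$. For this I use $p\ge1$ and $\MAX\kappa/p\ge1$. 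Together these give $\trackingres_i\le p^i\trackingres_i\le\trackingressum$, and hence
\[
\sum_i\trackingres_i^2\le\trackingressum\sum_i\trackingres_i\le\trackingressum\sum_i p^i\trackingres_i\le\trackingressum^2 .
\]
This requires care with where the powers of $\trackingressum$ sit. The restriction $p\ge1$ in the statement is exactly what makes this step work. After the comparison, the leftover nonpositive terms are dropped, including the one with $j=k$, and this yields $\Psi_p$.
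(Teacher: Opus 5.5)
\textbf{The domination step does not close.} After exchanging the sums, the coefficient of $\frac54 p^j\check\varrho_{j+1}^2$ coming from the $\check e_{p,n}$ is $\varsigma_p\sum_{i=1}^{k-j}\psi_i^2$. For the subtracted term to absorb it you need $\sum_{i\ge1}\psi_i^2\le\varsigma_p$. Your chain, however, ends with $\sum_i\psi_i^2\le\varsigma_p^2$. That only bounds the coefficient by $\varsigma_p^3$, which does not dominate $\varsigma_p^2$ unless $\varsigma_p\le1$.

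No sharper chain can repair this. Both $\psi_i$ and $\varsigma_p$ are linear in $(\alpha_u,\alpha_w)$, so $\sum_i\psi_i^2\le\varsigma_p$ is not scale invariant. With the literal $\psi_{k-j}^2$ in \eqref{eq:online-scalar-tracking:ek}, the claim is in fact false. Take $k=1$, $d_1^u=d_1^w=0$, $\transformError{0}=\transformError{1}=0$, $\check\varrho_1=1$, $\check\varrho_2=0$, and all other scalars zero. Then $\sum_{n=0}^{1}p^n e_{p,n}=\frac54\varsigma_p(\psi_1^2-\varsigma_p)$, which is positive once $(\alpha_u,\alpha_w)$ is scaled up, while $\Psi_p=0$.

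\textbf{The missing idea: the square in \eqref{eq:online-scalar-tracking:ek} is a typo.} In the proof of \cref{lemma:weaker:inner-product-error-estimate}, the choice $\theta_{k,j}=\varsigma_p^{-1}p^{k-j}\psi_{k-j}$ produces the term $\psi_{k-j}^2\check\varrho_{j+1}^2/\theta_{k,j}=\varsigma_p\psi_{k-j}p^{-(k-j)}\check\varrho_{j+1}^2$. This is also the only form that scales correctly, since $\check e_{p,k}$ must scale like $\varsigma_p^2\check\varrho^2$. With this term, your exchange of sums gives the coefficient $\varsigma_p\sum_{i=1}^{k-j}\psi_i-\varsigma_p^2$. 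It is nonpositive by the middle link of your own chain:
\[
\sum_{i\ge1}\psi_i\le\sum_{i\ge0}p^i\psi_i\le\frac{\MAX\kappa}{p}\sum_{i\ge0}p^i\psi_i=\varsigma_p .
\]
This uses $p\ge1$ and $\MAX\kappa/p>1$, and it is exactly where the restriction $p\ge1$ enters.

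The rest of your plan is right and is the argument of the cited Lemma A.4. That includes cancelling $p^{-n}$ in the initial-error terms, using $\sum_n\kappa_u^{-n}\le\kappa/(\kappa-1)$ and $\sum_n\iota_n\le(\kappa-1)^{-2}$ (from \eqref{eq:weaker:iota-est-sum} with $p=1$), and dropping the unmatched $j=k$ term. You are also right about the transformation errors. Carrying the $4$ gives $5\sum_n p^n\transformError{n}^2$, so $\Psi_p$ as printed is too small in that term. This is harmless in the paper, because $\transformError{k}=0$ in \cref{lemma:tracking:diff-transformation:reduced}.
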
\begin{proof}
    The proof is analogous to that of \cite[Lemma A.4]{tuomov2024tracking}.
\end{proof}

Finally we have the following result similar to \cite[Lemma A.5]{tuomov2024tracking}.\begin{lemma}
    Suppose \cref{ass:online-scalar-tracking:main} holds for a $k \in \N$.
    Then, for $\breve e_{1,n}$ given in \eqref{eq:online-scalar-tracking:ek}, we have
    $
        \sum_{n=0}^{k-1} \breve e_{1,n}
        \le \Psi_1 + \frac{5}{4}\trackingressum[1]^2\sum_{n=0}^{k-1}\check\varrho_{n+1}^2.
    $
\end{lemma}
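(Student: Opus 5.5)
The claim is a summation of the per-step bound $\check e_{1,n}$ from \cref{lemma:weaker:inner-product-error-estimate} with $p=1$ (the statement's $\breve e_{1,n}$ I read as $\check e_{1,n}$ from \eqref{eq:online-scalar-tracking:ek}). The plan is to use the decomposition of \cref{lemma:weaker:error-sum}: by definition, $\check e_{1,n} = e_{1,n} + \frac54\trackingressum[1]^2\check\varrho_{n+1}^2$ for every $n$. Summing over $n=0,\ldots,k-1$ gives
\[
    \sum_{n=0}^{k-1}\check e_{1,n}
    = \sum_{n=0}^{k-1} e_{1,n} + \frac54\trackingressum[1]^2\sum_{n=0}^{k-1}\check\varrho_{n+1}^2 .
\]
It therefore remains to bound $\sum_{n=0}^{k-1} e_{1,n}\le\Psi_1$.

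For that, I apply \cref{lemma:weaker:error-sum} with $p=1\in[1,\kappa)$, which is admissible since $\kappa>1$. Its conclusion is stated for the index $k$ at which \cref{ass:online-scalar-tracking:main} holds, with the sum over $n\le k$. I will first check that the assumption for index $k$ implies it for index $k-1$: the inequalities (i)--(iii) for $j\le k-1$ are a subset of those for $j\le k$. So the lemma may be invoked at $k-1$, yielding $\sum_{n=0}^{k-1}e_{1,n}\le\Psi_1$. Here $\Psi_1$ carries the transformation errors summed up to $k-1$, and this is bounded by the $\Psi_1$ with the sum up to $k$, since all terms are nonnegative. Alternatively, one can apply the lemma directly at $k$ and drop the term $e_{1,k}$ — but that requires $e_{1,k}\ge0$, which is not automatic. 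So I prefer the restriction argument.

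The main obstacle is really only bookkeeping. One point is the index convention for $\check\varrho_{n+1}$, which is defined only for indices $1,\ldots,k$ in the assumption; this is consistent with $n+1\le k$. The other is making sure the $p$-weights $p^n$ in \cref{lemma:weaker:error-sum} collapse to $1$. If one distrusts the omitted proof of \cref{lemma:weaker:error-sum}, one can verify $\sum_n e_{1,n}\le\Psi_1$ directly from \eqref{eq:online-scalar-tracking:ek}, using these bounds:
\begin{itemize}
    \item $\sum_n\kappa_u^{-n}\le\kappa/(\kappa-1)$;
    \item $\sum_n\iota_n\le(\kappa-1)^{-2}$, by \eqref{eq:weaker:iota-est-sum};
    \item the double sum $\sum_n\sum_{j<n}\trackingressum[1]\trackingres_{n-j}^2\check\varrho_{j+1}^2$, after swapping sums, is at most $\trackingressum[1]\sum_m\trackingres_m^2\cdot\sum_j\check\varrho_{j+1}^2$.
\end{itemize}
The last of these is absorbed into the $\frac54\trackingressum[1]^2\check\varrho$-term, using $\sum_m\trackingres_m^2\le\trackingressum[1]\cdot\sup_m\trackingres_m\le\dots$ in the spirit of \cite[Lemma A.4]{tuomov2024tracking}.
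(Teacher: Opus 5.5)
Your argument is correct and matches the paper's proof. The paper also writes $\check e_{1,n}=e_{1,n}+\frac{5}{4}\trackingressum[1]^2\check\varrho_{n+1}^2$ and bounds $\sum_{n=0}^{k-1}e_{1,n}\le\Psi_1$ using the $p=1$ case of \cref{lemma:weaker:error-sum}; it cites \eqref{eq:online-scalar-tracking:ressum} there, but evidently means that lemma. Your restriction to index $k-1$, rather than dropping a possibly negative $e_{1,k}$, correctly handles the index shift that the paper leaves implicit.

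Your optional direct verification is not needed, and its last absorption step would not go through as written, because $\sum_m\trackingres_m^2\le\trackingressum[1]$ fails in general. The $\trackingres_{k-j}^2$ in \eqref{eq:online-scalar-tracking:ek} appears to be a typo for $\trackingres_{k-j}$, which is what the choice of $\theta_{k,j}$ produces. With that correction the absorption follows from $\sum_{m\ge1}\trackingres_m\le\trackingressum[1]$.
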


\begin{proof}
    We have $\breve e_{1,n} = e_{1,n} + \frac{5}{4}\trackingressum[1]^2 \check\varrho_{n+1}^2$,
    where \eqref{eq:online-scalar-tracking:ressum} bounds $\sum_{n=0}^{k-1} e_{1,n} \le \Psi_1$.
\end{proof}

\bibliographystyle{jnsao}
\end{document}